\documentclass[12pt, A4paper, oneside]{article}
\setlength{\topmargin}{0in} \setlength{\oddsidemargin}{0in}
\setlength{\evensidemargin}{0in} \setlength{\headheight}{0in}
\setlength{\headsep}{0in} \setlength{\textwidth}{6.5in}
\setlength{\textheight}{9in}
\usepackage{amsthm,amsmath,amssymb,amscd,verbatim,epsfig,verbatim}
\usepackage{enumitem,multirow}
\theoremstyle{plain}
\newtheorem{thm}{Theorem}[section]
\newtheorem{lem}{Lemma}[section]
\newtheorem{col}{Corollary}[section]

\newtheorem{rem}{Remark}[section]

\numberwithin{equation}{section}


\begin{document}

\fontsize{12}{25pt}\selectfont

\title{One-sided solutions for optimal stopping problems with logconcave reward functions}

\author{Yi-Shen Lin\thanks{Institute of Statistical Science, Academia Sinica, Taipei 115, Taiwan, R.O.C. Email address: yslin@stat.sinica.edu.tw}\;\;and Yi-Ching Yao\thanks{Institute of Statistical Science, Academia Sinica, Taipei 115, Taiwan, R.O.C. Email address: yao@stat.sinica.edu.tw}
    \date{}
    } \maketitle
\centerline {October 11, 2017}
\begin{abstract}
\fontsize{12}{18pt}\selectfont
In the literature on optimal stopping, the problem of maximizing the expected discounted reward over all stopping times has been explicitly solved for some special reward functions (including $(x^+)^{\nu}$, $(e^x-K)^+$, $(K-e^{-x})^+$, $x\in\mathbb{R}$, $\nu\in(0,\infty)$ and $K>0$) under general random walks in discrete time and L\'evy processes in continuous time (subject to  mild integrability conditions). All of such reward functions are continuous, increasing and logconcave while the corresponding optimal stopping times are of threshold type (i.e. the solutions are one-sided). In this paper, we show that all optimal stopping problems with increasing, logconcave and right-continuous reward functions admit one-sided solutions for general random walks and L\'evy processes. We also investigate in detail the principle of smooth fit for L\'evy processes when the reward function is increasing and logconcave.
\end{abstract}
\emph{Keywords:} optimal stopping; threshold type; one-sided solution; logconcave function; random walk; L\'evy process; smooth fit.
\begin{tabbing}
2010 Mathematics Subject Classification:\;\=Primary 60G40\\
\>Secondary 62L15; 60J10; 60J65
\end{tabbing}

\fontsize{12}{20pt}\selectfont
\section{Introduction}
\hspace*{18pt}
Let $X=\{X_t\}_{t\ge0}$ be a process with independent stationary increments where the time parameter $t$ is either discrete (i.e. $t\in\mathbb{Z}^+=\{0,1,\dots\}$) or continuous (i.e. $t\in\mathbb{R}^+=[0,\infty)$). Let $X_0=x$ be the initial state. We assume that $X$ is defined on a filtered probability space $(\Omega,\mathcal{F},\{\mathcal{F}_t\},P)$, where for each $t$, $\mathcal{F}_t$ is the (enlarged) $\sigma$-field generated by $\{X_s:s\le t\}$. For a given nonnegative measurable reward function $g$ and a discount factor $q\ge0$, we study the optimal stopping problem of finding $\tau^*\in\mathcal{M}$ which satisfies
\begin{equation}\label{e1.1}
E_x\left(e^{-q\tau^*}g(X_{\tau^*})\mathbf{1}_{\{\tau^*<\infty\}}\right)=V(x):=\sup_{\tau\in\mathcal{M}}E_x\left(e^{-q\tau}g(X_{\tau})\mathbf{1}_{\{\tau<\infty\}}\right),
\end{equation}
where the subscript $x$ in $E_x$ refers to the initial state $X_0=x$, $\mathcal{M}$ is the collection of all stopping times taking values in $[0,\infty]$  and $\mathbf{1}_{A}$ denotes the indicator function of $A$. For $a\in \mathbb{R} \cup \{-\infty\}$, the stopping time $\tau_a:=\inf\{t\ge 0: X_t\ge a\}$ is said to be of threshold type with threshold $a$. An optimal stopping time of threshold type exists if $\tau^*=\tau_a$ for some $a\in \mathbb{R} \cup \{-\infty\}$, in which case the optimal stopping problem (\ref{e1.1}) is said to admit a one-sided solution.

In the literature, the problem (\ref{e1.1}) has been solved explicitly for special reward functions including $(x^+)^\nu=\left(\max\{x,0\}\right)^\nu$ ($\nu>0$), $(e^x-K)^+$ and $(K-e^{-x})^+$ in both discrete and continuous time. See Dubins and Teicher \cite{ref2}, Darling, Liggett and Taylor \cite{ref1}, Mordecki \cite{ref6}, Novikov and Shiryaev \cite{ref8} \cite{ref9}, and Kyprianou and Surya \cite{ref4}. Note that all of the above reward functions are continuous, increasing and logconcave, while the corresponding optimal stopping times are of threshold type. (Here and below, the word ``increasing" means ``nondecreasing.") Note also that for each of these reward functions, the one-sided solution for (\ref{e1.1}) is found explicitly under {\em general} random walks in discrete time and L\'{e}vy processes in continuous time (subject to mild integrability conditions). On the other hand, by imposing more structures on $\{X_t\}$, the problem (\ref{e1.1}) can be solved explicitly for more general reward functions. Indeed, assuming that $\{X_t\}$ is a matrix-exponential jump diffusion, Sheu and Tsai \cite{ref14} have obtained an explicit one-sided solution of (\ref{e1.1}) for a fairly general class of increasing and logconcave reward functions $g\ge0$ which satisfy some additional technical conditions. In view of the above results, two natural questions arise concerning the relationship between the logconcavity and monotonicity of $g$ and the existence of a one-sided solution: (Q1) Does the logconcavity and monotonicity of $g$ imply the existence of a one-sided solution? More precisely, if $g\ge0$ is increasing and logconcave, does (\ref{e1.1}) admit a one-sided solution under {\em general} random walks in discrete time and L\'{e}vy processes in continuous time? (Q2) To what extent is the logconcavity and monotonicity of $g$ implied by the existence of a one-sided solution? To make (Q2) more precise, observe that it is easy to find a reward function $g\ge0$ (which is neither increasing nor logconcave) and a process $\{Y_t\}_{t\ge0}$ with $Y_0=0$ such that for some threshold $a$, $\tau_a$ is optimal under $\{X_t=x+Y_t\}_{t\ge0}$ for any initial state $x\in\mathbb{R}$. Indeed, if for some $g\ge0$, $\tau_a=\inf\{t\ge0:X_t\ge a\}$ is optimal under $\{X_t=x+Y_t\}_{t\ge0}$ for all $x\in\mathbb{R}$, then with respect to the same process $\{X_t\}$,
\begin{align*}
E_x\left(e^{-q\tau_a}\tilde{g}(X_{\tau_a})\mathbf{1}_{\{\tau_a<\infty\}}\right)&=E_x\left(e^{-q\tau_a}g(X_{\tau_a})\mathbf{1}_{\{\tau_a<\infty\}}\right)\\
&=\sup_{\tau\in\mathcal{M}}E_x\left(e^{-q\tau}g(X_{\tau})\mathbf{1}_{\{\tau<\infty\}}\right)\\
&\ge \sup_{\tau\in\mathcal{M}}E_x\left(e^{-q\tau}\tilde{g}(X_{\tau})\mathbf{1}_{\{\tau<\infty\}}\right),
\end{align*}
where $\tilde{g}\ge0$ is any reward function satisfying
\begin{equation}\label{e1.2}
0\le\tilde{g}(x)\le g(x)\;\;\mbox{for all}\;\;x\in\mathbb{R}\;\;\mbox{and}\;\;\tilde{g}(x)=g(x)\;\;\mbox{for}\;\;x\ge a.
\end{equation}
This shows that, with respect to the process $\{X_t\}$ (for any $X_0=x\in\mathbb{R}$), $\tau_a$ is optimal for any reward function $\tilde{g}$ satisfying (\ref{e1.2}), which need not be increasing or logconcave. Thus, it seems natural to formulate (Q2) as ``If $g\ge0$ is such that (\ref{e1.1}) admits a one-sided solution with respect to a sufficiently rich class of processes $\{X_t\}$, is $g$ necessarily increasing and logconcave?" The work of Hsiau, Lin and Yao \cite{ref3} makes an attempt to address (Q1) and (Q2). Specifically, it is shown (\textit{cf.} \cite[Theorem 3.1]{ref3}) that if $g\ge0$ is increasing, logconcave and right-continuous, (\ref{e1.1}) admits a one-sided solution provided that $\{X_t\}$ is a spectrally negative L\'{e}vy process (for which a tractable fluctuation theory is available due to no overshoots). It is also shown by example that the right-continuity condition cannot be removed in general. Furthermore, it is established (\textit{cf.} \cite[Theorem 6.1]{ref3}) that for fixed $q>0$, a nonnegative measurable reward function $g$ is necessarily increasing and logconcave if for {\em each} $\alpha\in\mathbb{R}$, there is a threshold $u(\alpha)\in\mathbb{R}\cup\{-\infty\}$ such that $\tau_{u(\alpha)}$ is optimal with respect to the Brownian motion process $X_t=x+\alpha t+B_t$ where $\{B_t\}$ is standard Brownian motion. (A similar result is also established for the case $q=0$ (\textit{cf.} \cite[Theorem 5.2]{ref3}) where the drift parameter $\alpha$ is restricted to $\alpha\in(-\infty,0)$.)

The present paper addresses (Q1) in full generality (with the right-continuity condition imposed on $g$). Specifically, we treat the discrete-time case in Section 2, and show that if $g\ge0$ is nonconstant, increasing, logconcave and right-continuous, then with respect to any random walk, there is a unique threshold $-\infty\le u\le\infty$ such that $V(x)>g(x)$ for $x<u$ and $V(x)=g(x)$ for $x\ge u$. Moreover,
if $-\infty\le u<\infty$, $\tau_u$ is optimal attaining the (finite) value $V(x)$ of (\ref{e1.1}) for all $x\in\mathbb{R}$. If $u=\infty$, {\em either} $V(x)=\infty$ for all $x\in\mathbb{R}$ in which case there are randomized stopping times with an infinite expected (discounted) reward {\em or} $V(x)<\infty$ for all $x\in\mathbb{R}$ in which case no optimal stopping time exists. Via the standard time discretization device, the results in Section 2 are applied in Section 3 to general L\'{e}vy processes in continuous time.
With the help of the results in Section 3, we investigate the principle of smooth fit for L\'{e}vy processes $\{X_t\}$ when $g\ge0$ is a general increasing and logconcave function. Alili and Kyprianou \cite{ref18} have shown for $g(x)=(K-e^{-x})^+$ (described here in our setting which corresponds to perpetual American put) that the smooth fit principle holds if and only if $0$ is regular for $(0,\infty)$ for $\{X_t\}$. They have also conjectured that this result holds more generally. (See also Boyarchenko and Levendorski\v{i} \cite{ref19} for a related discussion and Peskir \cite{ref20} for an example in which $\{X_t\}$ is a regular diffusion process and $g$ is differentiable but the value function fails to satisfy the smooth fit condition at the optimal stopping boundary.) We show in Section 4 that their conjecture is true for general increasing and logconcave $g\ge0$ provided $\log g$ is not linear in any interval. If $\log g$ is linear in some interval, the smooth fit principle may hold even when $0$ is irregular for $(0,\infty)$ for $\{X_t\}$. Section 5 contains concluding remarks along with a discussion of conditions for the optimal threshold $u<\infty$. The proofs of some technical lemmas (stated in Section 2) are relegated to Section 6, which involve delicate arguments to deal with the issue of overshoots. It should be remarked that the optimal threshold and corresponding value function derived in Sections 2 and 3 are not explicit, which depend on (general) $g$ and $\{X_t\}$ in a complicated way. This fact makes it a challenging task to verify that the value function is excessive.

We close this section by briefly reviewing some recent papers in which effective methods are proposed to construct an {\em explicit} solution of (\ref{e1.1}) for $g$ (not necessarily increasing or logconcave) under $\{X_t\}$ which is a L\'{e}vy process or a more general Markov process in continuous time. Surya \cite{ref11} has introduced an averaging problem (associated with (\ref{e1.1})) whose solution, if it exists, yields a fluctuation identity for overshoots of a L\'{e}vy process. Then the value and the optimal stopping time for (\ref{e1.1}) can be expressed in terms of the solution to the averaging problem provided this solution has certain monotonicity properties. See also Deligiannidis, Le and Utev \cite{ref13} for related results on L\'{e}vy processes as well as on random walks. The work of Christensen, Salminen and Ta \cite{ref12} characterizes the solution of (\ref{e1.1}) similarly as in \cite{ref13,ref11} but under very general strong Markov processes including diffusions, L\'{e}vy processes and continuous-time Markov chains. Moreover, the solution can be either one-sided or two-sided depending on the representing function for the given reward function. More recently, Mordecki and Mishura \cite{ref7} have generalized Surya's averaging problem so as to make the construction method more flexible. As an example, an explicit solution is obtained for $g(x)=(x+\beta\sin x)\mathbf{1}_{(0,\infty)}(x)$ with $0<\beta<1$ under a compound Poisson process with a negative drift. Lately, Christensen \cite{ref15} has introduced an  auxiliary problem for a 2-dimensional process consisting of the underlying (Markov) process $\{X_t\}$ and its running maximum. Under suitable assumptions, the auxiliary problem turns out to have the infinitesimal look-ahead rule as its solution, which then yields an optimal stopping time for (\ref{e1.1}).

\section{Optimal stopping for random walks}
\hspace*{18pt}In this section, we use $n\in\mathbb{Z}^+$ (instead of $t$) to denote the discrete time parameter. Let $\xi,\xi_1,\xi_2,\dots$ be a sequence of real-valued independent and identically distributed (i.i.d.) random variables. To avoid trivial cases, assume that $P(\xi>0)>0$. For $X_0=x\in\mathbb{R}$, let $X_{n+1}=X_n+\xi_{n+1}$ for $n=0,1,\dots$, so that $\{X_n\}_{n\ge0}$ is a random walk with initial state $x$. For $y\in\mathbb{R}\cup\{-\infty\}$, define $\tau_y=\inf\{n\ge0:X_n\ge y\}$ (a threshold-type stopping time) and $T_y=\inf\{n\ge1:X_n\ge y\}$ (which is different from $\tau_y$ if $X_0\ge y$). Consider a nonnegative reward function $g:\mathbb{R}\to[0,\infty)$ which is nonconstant, increasing (i.e. $g(x)\le g(y)$ for $x\le y$) and logconcave (i.e. $g(\theta x+(1-\theta)y)\geq\left(g(x)\right)^{\theta}\left(g(y)\right)^{1-\theta}$ for all $x,y$ and $0<\theta<1$). Letting $\log 0:=-\infty$, the function $h(x):=\log g(x)$ is increasing and concave, so that the left-hand derivative $h'(x-)$ is well defined (possibly $+\infty$) at every $x$ with $h(x)>-\infty$. Letting $h'(x-):=+\infty$ if $h(x)=-\infty$, we have that $h'(x-)$ is decreasing (and nonnegative) in $x\in \mathbb{R}$.
Define
\begin{align}
&\beta=\lim_{x\to\infty}h'(x-),\label{e1}\\
&u=\inf\left\{x\in\mathbb{R}:\frac{E_x\left(e^{-qT_x}g\left(X_{T_x}\right)\mathbf{1}_{\left\{T_x<\infty\right\}}\right)}{g(x)}\le1\right\}\;\;\left(\frac{a}{0}:=\infty\;\;\mbox{for}\;\;a\ge0\right),\label{g1}\\
&W=\sup_{y\in\mathbb{R}}E_0(e^{-q\tau_y}g(X_{\tau_y})\mathbf{1}_{\left\{\tau_y<\infty\right\}})=\sup_{y\in\mathbb{R}}E(e^{-q\tau_y}g(X_{\tau_y})\mathbf{1}_{\left\{\tau_y<\infty\right\}}),\label{e2}\\
&V(x)=\sup_{\tau\in\mathcal{M}}E_{x}\left(e^{-q\tau}g(X_{\tau})\mathbf{1}_{\left\{\tau<\infty\right\}}\right)\;(x\in\mathbb{R}).\label{g2}
\end{align}
(While the subscript $x$ in $E_x$ refers to the initial state $X_0=x$, in case $x=0$, we write $E=E_0$ for simplicity as in (\ref{e2}).)

\begin{rem}
A nonnegative, nonconstant, increasing and logconcave function $g$ is continuous everywhere except possibly at
\begin{equation}\label{v1}
x_0:=\inf\{s\in\mathbb{R}:g(s)>0\}.
\end{equation}
Note that $-\infty\le x_0<\infty$. Moreover, since $g$ is nonconstant, $g$ is not identically 0 while $\lim_{x\to-\infty}g(x)=0$. In Theorem \ref{t2.1} below, $g$ is assumed to be right-continuous, which implies that $g(x_0)=g(x_0+)$ if $x_0>-\infty$.
\end{rem}

\begin{rem}
Note that $\mathcal{L}(T_x,X_{T_x}\mathbf{1}_{\{T_x<\infty\}}\mid X_0=x)=\mathcal{L}(T_0,(x+X_{T_0})\mathbf{1}_{\{T_0<\infty\}}\mid X_0=0)$, where $\mathcal{L}(Z)$ denotes the law of a random vector $Z$. Since by logconcavity, $g(x+\delta)/g(x)$ is decreasing in $x$ for $\delta\ge0$, it follows that
\begin{equation}\label{e2.18}
\frac{E_x\left(e^{-qT_x}g\left(X_{T_x}\right)\mathbf{1}_{\left\{T_x<\infty\right\}}\right)}{g(x)}=\frac{E\left(e^{-qT_0}g\left(x+X_{T_0}\right)\mathbf{1}_{\left\{T_0<\infty\right\}}\right)}{g(x)}\;\;\mbox{is decreasing in}\;\;x.
\end{equation}
It can be argued that if $G(x):=E_x\left(e^{-qT_x}g\left(X_{T_x}\right)\mathbf{1}_{\left\{T_x<\infty\right\}}\right)=\infty$ for some $x$, then $G(x)=\infty$ for all $x\in\mathbb{R}$, in which case we have $u=\infty$. Thus, if $u<\infty$, then $G(x)<\infty$ is continuous and increasing in $x>x_0$, so that
\begin{equation*}
\frac{E_x\left(e^{-qT_x}g\left(X_{T_x}\right)\mathbf{1}_{\left\{T_x<\infty\right\}}\right)}{g(x)}\;\left(=\frac{G(x)}{g(x)}\right)\;\mbox{is continuous}\;(\mbox{and decreasing})\;\mbox{in}\;\;x>x_0,
\end{equation*}
where $x_0$ is given in $(\ref{v1})$. It follows that
\begin{equation}\label{e2.17}
E_u\left(e^{-qT_u}g\left(X_{T_u}\right)\mathbf{1}_{\left\{T_u<\infty\right\}}\right)=g(u)\;\;\mbox{provided}\;\;x_0<u<\infty.
\end{equation}
\end{rem}

\begin{thm}\label{t2.1}
Let $g:\mathbb{R}\to[0,\infty)$ be nonconstant, increasing, logconcave and right-continuous, and define $\beta$, $u$, $W$ and $V(x)$ as in $(\ref{e1})$--$(\ref{g2})$. Assume $P(\xi>0)>0$. Then the following statements hold.
\begin{enumerate}
\item[\upshape(i)]If $-\infty\le u<\infty$, then the threshold-type stopping time $\tau_u$ is optimal, i.e. $V(x)=E_{x}\left(e^{-q\tau_u}g(X_{\tau_u})
\mathbf{1}_{\left\{\tau_u<\infty\right\}}\right)$ for all $x\in\mathbb{R}$.
\item[\upshape(ii)]If $u=\infty$, then $V(x)=e^{\beta x}W$ for all $x$. If, in addition, $W=\infty$, then there exist $($randomized$)$ stopping times that yield an infinite expected $($discounted$)$ reward; if $W<\infty$, then $V(x)=\lim_{y\to\infty}E_x\left(e^{-q\tau_{y}}g(X_{\tau_y})\mathbf{1}_{\{\tau_y<\infty\}}\right)$ but there is no optimal stopping time.
\end{enumerate}
\end{thm}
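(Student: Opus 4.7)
The plan in case (i) is to verify that the candidate
\begin{equation*}
\tilde V(x):=E_x\bigl[e^{-q\tau_u}g(X_{\tau_u})\mathbf 1_{\{\tau_u<\infty\}}\bigr]
\end{equation*}
coincides with $V(x)$; by standard optimal stopping theory it suffices to show that $\tilde V\ge g$ and that $\tilde V$ is $(e^{-q},X)$-excessive, since then $\tilde V$ is an excessive majorant of $g$ dominating $V$, while $\tilde V\le V$ is immediate from the definitions. In case (ii) the plan is to identify $V(x)$ with $e^{\beta x}W$ via a translation-and-limit argument on the threshold strategies $\tau_y$ as $y\uparrow\infty$.

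The central tool in both cases is the analysis of the discounted reward along the strict ascending ladder epochs $0=L_0<L_1<\cdots$ of $\{X_n\}$. Using the i.i.d.\ structure of ladder increments and the strong Markov property,
\begin{equation*}
E\bigl[e^{-qL_{k+1}}g(X_{L_{k+1}})\mathbf 1_{\{L_{k+1}<\infty\}}\bigm|\mathcal F_{L_k}\bigr]=e^{-qL_k}\mathbf 1_{\{L_k<\infty\}}\,M(X_{L_k}),
\end{equation*}
where $M(y):=E_y[e^{-qT_y}g(X_{T_y})\mathbf 1_{\{T_y<\infty\}}]$. By (\ref{e2.18}) and the definition of $u$, $M(y)/g(y)>1$ on $\{y<u\}$ and $\le 1$ on $\{y\ge u\}$, so this ladder-epoch process is a submartingale below $u$ and a supermartingale above $u$. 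For (i), dominance $\tilde V\ge g$ on $\{x<u\}$ then follows from optional stopping at $K\wedge n$, where $K:=\inf\{k:X_{L_k}\ge u\}$, and $n\to\infty$; excessivity at $x<u$ is immediate from the strong Markov property applied at time $1$, since $\tau_u\ge 1$ under $P_x$; and excessivity at $x=u$ is exactly (\ref{e2.17}).

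The crux, and what I expect to be the main obstacle, is establishing excessivity at $x>u$, namely
\begin{equation*}
g(x)\ge E_x\bigl[e^{-qT_u}g(X_{T_u})\mathbf 1_{\{T_u<\infty\}}\bigr].
\end{equation*}
Here the ladder-epoch argument runs in the wrong direction: under $P_x$ with $x>u$ the walk typically dips below $u$ before returning, and the contribution from this excursion mixes the overshoot distribution of the walk with the shape of $g$ near $u$. I would aim to reduce the inequality at $x>u$ to the equality at $x=u$ by decomposing the interval $[0,T_u]$ under $P_x$ via the first downcrossing of $u$ and a reversal/duality step, exploiting logconcavity of $g$ together with the monotonicity of $M(y)/g(y)$ to absorb the overshoot contribution. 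This is presumably where the ``delicate'' lemmas deferred to Section 6 come into play.

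For (ii), the submartingale property along ladder epochs, now valid globally since $M(y)/g(y)>1$ for every $y$, implies that $y\mapsto E_0[e^{-q\tau_y}g(X_{\tau_y})\mathbf 1_{\{\tau_y<\infty\}}]$ is nondecreasing, so $W=\lim_{y\to\infty}E_0[e^{-q\tau_y}g(X_{\tau_y})\mathbf 1_{\{\tau_y<\infty\}}]$. Combining the translation identity $E_x[e^{-q\tau_y}g(X_{\tau_y})\mathbf 1_{\{\tau_y<\infty\}}]=E[e^{-q\tau_{y-x}}g(x+X_{\tau_{y-x}})\mathbf 1_{\{\tau_{y-x}<\infty\}}]$ with the convergence $g(x+z)/g(z)\to e^{\beta x}$ as $z\to\infty$ (from $h'(z-)\downarrow\beta$) yields $\lim_y E_x[e^{-q\tau_y}g(X_{\tau_y})\mathbf 1_{\{\tau_y<\infty\}}]=e^{\beta x}W$, hence $V(x)\ge e^{\beta x}W$. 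The matching upper bound $V(x)\le e^{\beta x}W$ requires showing that every stopping time is dominated in expectation by a threshold strategy $\tau_y$ with sufficiently large $y$, once more using the ladder-epoch submartingale apparatus together with logconcavity. Finally, when $W=\infty$ a randomization over $\{\tau_{y_k}\}$ for some $y_k\uparrow\infty$ with diverging expected rewards supplies randomized stopping times of infinite expected reward; when $W<\infty$, strictness of $M(y)/g(y)>1$ precludes any non-randomized optimal rule, since any candidate $\tau$ can be strictly improved by waiting one additional ladder step.
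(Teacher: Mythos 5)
Your framing of case (i) as ``verify $\tilde V\ge g$ and $\tilde V$ is excessive, then invoke the excessive-majorant lemma'' does coincide with the paper's strategy (Lemma~\ref{lb}), and you correctly put your finger on the crux: excessivity at $x>u$, namely $g(x)\ge E_x\bigl(e^{-qT_u}g(X_{T_u})\mathbf 1_{\{T_u<\infty\}}\bigr)$. But the remedy you sketch for this step --- decomposing $[0,T_u]$ at the first downcrossing of $u$ and performing a ``reversal/duality'' to absorb the overshoot --- is not the paper's argument, and as stated it is a placeholder rather than a proof. The paper resolves this by a different idea: an exponential tilting of the reward. Fix $x>u$ and set $c=h'(x-)$; the concave hull at $x$ gives the log-linear bound $h_\infty(y)=h(x)+c(y-x)\ge h(y)$. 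If $c\le\alpha$ (the largest root of $\phi(\lambda)=e^q$), then $e^{-qn}g_\infty(X_n)$ is a supermartingale, and one concludes directly; if $c>\alpha$, the paper interpolates with a one-parameter family $h_w$ that agrees with the tangent on $(-\infty,x+w]$ and shifts $h$ beyond, proves the normalized first-return functional $f(w)$ is continuous with $f(0)\le1\le f(\infty)$, picks $w'$ with $f(w')=1$, and then applies the ladder-epoch monotonicity Lemma~\ref{lc}(i) to $g_{w'}$. Your submartingale/supermartingale-below-and-above-$u$ picture does not by itself yield this; the overshoot from below into $(u,\infty)$ genuinely needs the tilting to be controlled.

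Two further gaps. First, the tilting argument requires $\phi(\lambda)=E(e^{\lambda\xi})<\infty$ for all $\lambda\ge0$, which the paper secures by a double truncation: first $g_k(x)=g(x\wedge b_k)$ so that the reward is eventually constant (Lemma~\ref{le}), and within that, $\xi_i^{(k)}=\min\{\xi_i,k\}$ so that exponential moments are finite (reducing to Lemma~\ref{la}); one then passes to the limit twice. Your proposal works with the untruncated $g$ and $\xi$ directly, and without the truncations the supermartingale/submartingale assertions about $e^{-qn}g_\infty(X_n)$ may fail or be vacuous. Second, your remark that excessivity at $x=u$ ``is exactly (\ref{e2.17})'' overlooks the boundary case $u=x_0$: (\ref{e2.17}) is only asserted for $x_0<u<\infty$, and when $u=x_0$ one only has the inequality (\ref{e2.20}) obtained by a one-sided limit (and indeed the paper warns that the equality can fail there). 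Your treatment of case (ii) is close to the paper's in spirit --- monotone thresholds, translation identity, and the limit $g(x+z)/g(z)\to e^{\beta x}$ --- though the paper closes the upper bound by verifying that $Q(x)=E\bigl(e^{-q}Q(x+\xi)\bigr)$, so that $e^{-qn}Q(X_n)$ is a martingale, and the nonexistence of an optimizer then follows cleanly from $V>g$ everywhere; your ``wait one more ladder step'' idea points in the same direction but is not carried out.
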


\begin{col}\label{c1}
\begin{itemize}
\item[\upshape(i)] $V(x)>g(x)$ for $x<u$ and $V(x)=g(x)$ for $x\ge u$.
\item[\upshape(ii)] $V(x)/g(x)$ is decreasing in $x$.
\item[\upshape(iii)] If $x_0<u<\infty$ or $u=-\infty$ or $u=\infty$ and $W<\infty$, then $V(x)$ is continuous everywhere.
\end{itemize}
\end{col}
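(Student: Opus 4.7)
My plan is to deduce all three parts from Theorem \ref{t2.1}, Remark 2.2, and the logconcavity of $g$. For part (i), $V(x)=g(x)$ for $x\geq u$ is immediate when $u<\infty$ (since $\tau_u=0$ under $P_x$, so Theorem \ref{t2.1}(i) applies) and vacuous when $u=\infty$. For $x<u$ I show $V(x)>g(x)$ in two subcases. If $g(x)>0$, the defining inequality in $(\ref{g1})$ fails at $x$, giving $E_x(e^{-qT_x}g(X_{T_x})\mathbf{1}_{\{T_x<\infty\}})>g(x)$; since $T_x\in\mathcal{M}$, this yields $V(x)>g(x)$. If $g(x)=0$, nonconstancy of $g$ together with $P(\xi>0)>0$ produce $y$ and $n$ with $g(y)>0$ and $P_x(X_n\geq y)>0$, so $V(x)\geq e^{-qn}g(y)P_x(X_n\geq y)>0$.

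For part (ii), I split on the two regimes of Theorem \ref{t2.1}. When $u=\infty$, $V(x)=e^{\beta x}W$ by Theorem \ref{t2.1}(ii); since $h'(x-)$ decreases to $\beta$, we have $h'(x-)\geq\beta$, so $V(x)/g(x)=e^{\beta x-h(x)}W$ is nonincreasing. When $u<\infty$ and $x<y\leq u$, let $\tau$ be the first-passage time of the centered walk $S_n:=\xi_1+\cdots+\xi_n$ into $[u-y,\infty)$---this is exactly the optimal rule $\tau_u$ under $P_y$, and is also a valid stopping rule under $P_x$. On $\{\tau<\infty\}$, $S_\tau\geq u-y>0$, so logconcavity (as in $(\ref{e2.18})$) gives $g(x+S_\tau)/g(x)\geq g(y+S_\tau)/g(y)$, and taking expectations yields $V(x)/g(x)\geq E(e^{-q\tau}g(x+S_\tau)/g(x)\mathbf{1}_{\{\tau<\infty\}})\geq V(y)/g(y)$. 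When $y\geq u$, part (i) gives $V(y)/g(y)=1\leq V(x)/g(x)$.

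For part (iii), the subcases $u=-\infty$ and $(u=\infty,W<\infty)$ are easy: in the first, the convention $a/0=\infty$ in $(\ref{g1})$ forces $u\geq x_0$, hence $x_0=-\infty$, so $g$ is continuous everywhere and $V=g$; in the second, $W<\infty$ forces $\beta<\infty$, so $V(x)=e^{\beta x}W$ is continuous. For the main case $x_0<u<\infty$: continuity on $[u,\infty)$ (and right-continuity at $u$) follows from $V=g$ together with right-continuity of $g$, and continuity on $(-\infty,u)$ requires showing that $V(x)=E(e^{-qT_{u-x}}g(x+S_{T_{u-x}})\mathbf{1}_{\{T_{u-x}<\infty\}})$ is continuous in $x$. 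The first-passage time $T_{u-x}$ and overshoot $S_{T_{u-x}}$ are piecewise monotone in $x$, and the dominated-convergence argument is controlled by the logconcavity bound $g(x+s)/g(x)\leq g(y_0+s)/g(y_0)$ for a fixed $y_0\in(x_0,u)$.

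The principal obstacle is left-continuity at $u$. As $x\uparrow u$, $T_{u-x}\downarrow T_{0+}:=\inf\{n\geq 1:S_n>0\}$ and $S_{T_{u-x}}\downarrow S_{T_{0+}}$, so dominated convergence gives $V(u-)=E(e^{-qT_{0+}}g(u+S_{T_{0+}})\mathbf{1})$. Reconciling this with the identity $(\ref{e2.17})$, $g(u)=E(e^{-qT_0}g(u+S_{T_0})\mathbf{1})$, is delicate because $T_0:=\inf\{n\geq 1:S_n\geq 0\}$ can be strictly smaller than $T_{0+}$ on $\{S_{T_0}=0\}$, which has positive probability when $P(\xi=0)>0$ or when the walk can hit $0$ exactly. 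The resolution is a restart argument: on $\{S_{T_0}=0\}$ the strong Markov property restarts the walk at level $u$, and iterating $(\ref{e2.17})$ yields $V(u-)=g(u)$. Continuity at interior $x\in(x_0,u)$ then follows analogously via the strong-Markov decomposition $V(x)=E_x(e^{-qT_z}V(X_{T_z})\mathbf{1})$ for $z$ just above $x$; I expect this overshoot/restart step to be the technical heart, consistent with the introduction's remark that the lemmas handling overshoots are relegated to Section 6.
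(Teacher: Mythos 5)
Your proofs of parts (i) and (ii) are correct and match the paper's route: for (i) you use the failure of the defining inequality in $(\ref{g1})$ when $x<u$ and $g(x)>0$, plus a positive-reward argument when $g(x)=0$; for (ii) you combine the formula $V=e^{\beta x}W$ when $u=\infty$ with a comparison via logconcavity and the suboptimal rule $\tau_{u-y+x}$ under $P_x$ when $u<\infty$.

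Part (iii) is where you go astray. For $x_0<u<\infty$, left-continuity of $V$ at $u$ needs no dominated-convergence or restart argument: since $g$ is increasing, so is $V$, giving $V(u-)\le V(u)=g(u)$; and since $V\ge g$ with $g$ continuous at $u>x_0$, also $V(u-)\ge g(u-)=g(u)$. That is the entire argument. Your route --- identifying $V(u-)=E(e^{-qT_{0+}}g(u+S_{T_{0+}})\mathbf{1}_{\{T_{0+}<\infty\}})$ and reconciling it with $(\ref{e2.17})$ by iterating over the atom $\{S_{T_0}=0\}$ --- can in principle be completed, but as written it leaves two real gaps: you never produce an $x$-independent integrable dominating function for the limit interchange (the bound $g(x+s)/g(x)\le g(y_0+s)/g(y_0)$ you invoke still depends on $x$ through the overshoot $S_{T_{u-x}}$), and you never check that the restart factor $E(e^{-qT_0}\mathbf{1}_{\{T_0<\infty,\,S_{T_0}=0\}})$ is strictly less than $1$ (it is, but this uses $P(\xi>0)>0$).

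Continuity on $(-\infty,u)$ likewise admits a short, uniform estimate that you missed. For $x<y<u$ with $y-x<u-x_0$, translation invariance of the walk and logconcavity give
$$
\frac{V(y)}{g(u)}\le\frac{V(x)}{g(x-y+u)},
$$
hence
$$
0\le V(y)-V(x)\le\left(\frac{g(u)}{g(u+x-y)}-1\right)g(u),
$$
which tends to $0$ uniformly as $y-x\downarrow 0$ since $g$ is continuous and positive at $u$. This covers all $x<u$ at once, including $x\le x_0$; your sketched strong-Markov decomposition for ``$z$ just above $x$'' only aims at $x\in(x_0,u)$ and is not carried out. (Also, the overshoot lemmas deferred to Section 6 support Theorem \ref{t2.1}, not this corollary, whose proof in the paper is short and relies only on monotonicity and the logconcavity estimate above.)
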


\begin{rem}\label{r2.3}
By Corollary \ref{c1}, $u=\inf\{x:V(x)=g(x)\}=\sup\{x:V(x)>g(x)\}$. By Theorem \ref{t2.1}, $(-\infty,u)$ and $[u,\infty)$ are the $($optimal$)$ continuation and stopping regions, respectively. If $u=-\infty$, there is no continuation region, so that stopping immediately is optimal. If $u=\infty$, there is no stopping region, so that it is never optimal to stop since more profit can be made by stopping at a later time. However, to never stop yields a zero reward.
\end{rem}

\begin{rem}
Novikov and Shiryaev \cite{ref9} have solved $(\ref{e1.1})$ for $g(x)=(x^+)^{\nu}$ with $\nu>0$, and found that the optimal threshold is the positive root of the associated Appell function, which generalizes an earlier result of Darling et al. \cite{ref1} for $\nu=1$. It can be shown that their optimal threshold agrees with $(\ref{g1})$. As an illustration, consider $g(x)=x^+$ and $q=0$, for which Darling et al. \cite{ref1} showed that if $E(\xi)<0$, then $\tau_{E(M)}$ is optimal where $M=\sup\{0,\xi_1,\xi_1+\xi_2,\dots\}$. For $E(\xi)<0$, the value of $u$ defined in $(\ref{g1})$ satisfies $(\textit{cf.}\;(\ref{e2.17}))$
$$
0<u=g(u)=E_u\left(g(X_{T_u})\mathbf{1}_{\{T_u<\infty\}}\right)=E_u\left(X_{T_u}\mathbf{1}_{\{T_u<\infty\}}\right)=E\left[(u+X_{T_0})\mathbf{1}_{\{T_0<\infty\}}\right],
$$
yielding
$$
u=\frac{E\left(X_{T_0}\mathbf{1}_{\{T_0<\infty\}}\right)}{P(T_0=\infty)}=E(M),
$$
where the second equality follows from the fact that $\mathcal{L}(M)=\mathcal{L}\left((X_{T_0}+M')\mathbf{1}_{\{T_0<\infty\}}\mid X_0=0\right)$ with $M'$ being an independent copy of $M$. When $E(\xi)\ge0$ or $E(\xi)$ is undefined $(\mbox{i.e.}$\;$E(\max\{\xi,0\})$ $=\infty=E(\max\{-\xi,0\}))$, it is readily shown that $u=\infty=E(M)$. Thus, the value of $u$ defined in $(\ref{g1})$ equals $E(M)$ regardless of whether $E(\xi)<0$.
\end{rem}

\begin{rem}\label{r1}
For $x\in\mathbb{R}$ with $g(x)>0$, it can be shown that
$$
E_x\left(e^{-qT_x}g\left(X_{T_x}\right)\mathbf{1}_{\left\{T_x<\infty\right\}}\right)/g(x)\le1\;\;\;\mbox{if and only if}\;\;\;E_x\left(e^{-q\tau_{x+}}g\left(X_{\tau_{x+}}\right)\mathbf{1}_{\left\{\tau_{x+}<\infty\right\}}\right)/g(x)\le1,
$$
where $\tau_{x+}:=\inf\{n\ge0:X_n>x\}$. It follows that the definition of $u$ in $(\ref{g1})$ is equivalent to
$$
u=\inf\left\{x\in\mathbb{R}:E_x\left(e^{-q\tau_{x+}}g\left(X_{\tau_{x+}}\right)\mathbf{1}_{\left\{\tau_{x+}<\infty\right\}}\right)/g(x)\le1\right\}.
$$
If $x_0<u<\infty$, it follows from $(\ref{e2.17})$ that
$$
g(u)=E_u\left(e^{-qT_u}g(X_{T_u})\mathbf{1}_{\{T_u<\infty\}}\right)=E_u\left(e^{-q\tau_{u+}}g(X_{\tau_{u+}})\mathbf{1}_{\{\tau_{u+}<\infty\}}\right),
$$
which together with the optimality of $\tau_u$ implies that $\tau_{u+}$ is also optimal. However, $\tau_{u+}$ may not be optimal if $u=x_0>-\infty$. As an example, consider the $($logconcave$)$ function $g(x)=\mathbf{1}_{[0,\infty)}(x)$, for which $u=x_0=0$. While $\tau_0$ is optimal, we have $g(0)=1>E_0\left(e^{-q\tau_{0+}}g(X_{\tau_{0+}})\mathbf{1}_{\{\tau_{0+}<\infty\}}\right)$ if $q>0$ or $E(\xi)<0$.
\end{rem}

To prove Theorem \ref{t2.1}, we need the following lemmas.

\begin{lem}\label{lg}
For a nonnegative function $f$ defined on $\mathbb{R}$ and $v\in\mathbb{R}$, let
$$
U(x)=E_x(e^{-q\tau_v}f(X_{\tau_v})\mathbf{1}_{\{\tau_v<\infty\}}).
$$
Then $E(e^{-q}U(x+\xi))=E_x(e^{-qT_v}f(X_{T_v})\mathbf{1}_{\{T_v<\infty\}})$.
\end{lem}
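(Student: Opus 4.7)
The plan is to apply the Markov property of the random walk at time $n=1$. Since $T_v\ge 1$ by definition, after the first step to $X_1=x+\xi_1$ the remainder of the problem is precisely the $\tau_v$-type computation from the new starting point $X_1$, whose value is $U(X_1)$. The factor $e^{-q}$ in the claim accounts for the delay of this first unit of time.

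Concretely, I would first decompose $T_v = 1 + \tilde\tau_v$, where $\tilde\tau_v := \inf\{n\ge 0 : X_{n+1} \ge v\}$, so that on $\{T_v < \infty\}$ one has $X_{T_v} = X_{1+\tilde\tau_v}$. Then I would observe that the shifted sequence $\{X_{n+1}\}_{n\ge 0}$ is itself a random walk with step distribution $\xi$, started at $X_1 = x+\xi_1$; consequently, conditionally on $X_1=y$, the pair $(\tilde\tau_v, X_{T_v})$ has the same joint law as $(\tau_v, X_{\tau_v})$ under $P_y$. Conditioning on $\xi_1$ and pulling out $e^{-q}$ then yields
\begin{align*}
E_x\!\left(e^{-qT_v}f(X_{T_v})\mathbf{1}_{\{T_v<\infty\}}\right)
&= e^{-q}\,E\!\left[E_{X_1}\!\left(e^{-q\tau_v}f(X_{\tau_v})\mathbf{1}_{\{\tau_v<\infty\}}\right)\right]\\
&= e^{-q}\,E\bigl[U(X_1)\bigr] \;=\; E\bigl(e^{-q}U(x+\xi)\bigr).
\end{align*}

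I do not anticipate any serious obstacle here: the whole content of the lemma is the one-step Markov property together with the spatial and temporal homogeneity of the walk. The only minor technical point is the measurability of $U(\cdot)$ as a function of the starting state, needed so that the conditional expectation in the display above is well-defined; this is standard because $U$ is defined as the expectation of a nonnegative measurable functional of the walk parameterized by $x$ through simple translation.
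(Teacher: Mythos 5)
Your argument is correct and is exactly the one the paper has in mind: the authors simply remark that the lemma ``follows easily by conditioning on $X_1=x+\xi$,'' which is precisely your one-step Markov/homogeneity decomposition $T_v = 1 + \tilde\tau_v$. Nothing more is needed.
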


\begin{lem}\label{lb}
Let $f(x)$ and $g(x)$ be nonnegative  functions defined on $\mathbb{R}$. If $f(x)\geq g(x)$ and $f(x)\geq E[e^{-q}f(x+\xi)]$ for all $x$, then
$$
f(x)\geq \sup_{\tau\in\mathcal{M}}E_{x}\left(e^{-q\tau}g(X_{\tau})\mathbf{1}_{\left\{\tau<\infty\right\}}\right), \;\; x\in\mathbb{R}.
$$
\end{lem}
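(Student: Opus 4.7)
The plan is to show that the hypothesis makes $f$ a $q$-excessive function for the random walk, so that $\{e^{-qn}f(X_n)\}_{n\ge 0}$ is a nonnegative supermartingale under $P_x$; the rest is a standard optional-sampling argument. First I would verify the supermartingale property: conditioning on $\mathcal{F}_n$ and using the independent stationary increments of the random walk,
$$
E_x\!\left[e^{-q(n+1)}f(X_{n+1})\mid \mathcal{F}_n\right]=e^{-qn}\,E\!\left[e^{-q}f(y+\xi)\right]\big|_{y=X_n}\le e^{-qn}f(X_n),
$$
where the last step invokes the hypothesis $f(y)\ge E[e^{-q}f(y+\xi)]$. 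Hence $\{e^{-qn}f(X_n)\}$ is a nonnegative supermartingale.

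Next, for any $\tau\in\mathcal{M}$ and any integer $N\ge 1$, the stopping time $\tau\wedge N$ is bounded, so optional sampling gives $f(x)=E_x[f(X_0)]\ge E_x\!\left[e^{-q(\tau\wedge N)}f(X_{\tau\wedge N})\right]$. Using $f\ge 0$ to discard the contribution from $\{\tau>N\}$ and then $f\ge g$ on $\{\tau\le N\}$, I obtain
$$
f(x)\ge E_x\!\left[e^{-q\tau}f(X_\tau)\mathbf{1}_{\{\tau\le N\}}\right]\ge E_x\!\left[e^{-q\tau}g(X_\tau)\mathbf{1}_{\{\tau\le N\}}\right].
$$

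Letting $N\to\infty$, monotone convergence (applied to the nondecreasing nonnegative integrands $e^{-q\tau}g(X_\tau)\mathbf{1}_{\{\tau\le N\}}$) yields $f(x)\ge E_x\!\left[e^{-q\tau}g(X_\tau)\mathbf{1}_{\{\tau<\infty\}}\right]$, and taking the supremum over $\tau\in\mathcal{M}$ gives the conclusion. There is no real obstacle here: this is the classical ``excessive majorant dominates the value function'' argument, and the nonnegativity of $f$ and $g$ sidesteps any integrability issue (in particular, no a priori bound on $E_x[e^{-q\tau}f(X_\tau)\mathbf{1}_{\{\tau<\infty\}}]$ is needed, since the bound on $E_x[e^{-q\tau}g(X_\tau)\mathbf{1}_{\{\tau<\infty\}}]$ is obtained before passing to any such limit).
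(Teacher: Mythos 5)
Your proof is correct and is exactly the standard ``excessive majorant dominates the value function'' argument. The paper itself does not give a proof of this lemma (it cites Novikov and Shiryaev (2005), Lemma 5), but what you have written is the canonical argument for that cited result. One minor remark worth noting: the finiteness of $E_x[e^{-qn}f(X_n)]$ for each $n$, which is needed for $\{e^{-qn}f(X_n)\}$ to be a genuine supermartingale (so that optional sampling applies), does follow from the hypotheses, since $f(x)<\infty$ and $E[e^{-q}f(x+\xi)]\le f(x)$ give $E_x[e^{-q(n+1)}f(X_{n+1})]\le E_x[e^{-qn}f(X_n)]\le\cdots\le f(x)<\infty$ by induction; it would be worth making this explicit rather than leaving it implicit.
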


\begin{lem}\label{lc}
Assume $P(\xi>0)>0$. Let $g:\mathbb{R}\to[0,\infty)$ be nonconstant, increasing and logconcave. Suppose that
\begin{equation}\label{e2.3}
\frac{E_y\left(e^{-qT_y}g(X_{T_y})\mathbf{1}_{\left\{T_y<\infty\right\}}\right)}{g(y)}\ge1\;\;\mbox{for some}\;\;y>x_0:=\inf\{s:g(s)>0\}.
\end{equation}
Then the following hold.
\begin{enumerate}
\item[\upshape(i)] $E_x\left(e^{-qT_a}g(X_{T_a})\mathbf{1}_{\left\{T_a<\infty\right\}}\right)\le E_x\left(e^{-qT_y}g(X_{T_y})\mathbf{1}_{\left\{T_y<\infty\right\}}\right)$ for $x\in\mathbb{R}$ and $a\in[-\infty,y)$.
\item[\upshape(ii)] $g(x)\le E_x\left(e^{-q\tau_y}g(X_{\tau_y})\mathbf{1}_{\left\{\tau_y<\infty\right\}}\right)$ for $x\in(-\infty,y)$.
\end{enumerate}
\end{lem}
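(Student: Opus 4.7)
The plan is to establish (ii) first and then to derive (i) from (ii) by a short strong Markov decomposition at $T_a$. The starting point is the following consequence of hypothesis (\ref{e2.3}) combined with the monotonicity in (\ref{e2.18}): for every $z \le y$,
$$
g(z) \le E_z\!\left(e^{-q T_z} g(X_{T_z})\mathbf{1}_{\{T_z<\infty\}}\right),
$$
which is trivial when $g(z) = 0$ and which follows from the decreasing ratio in (\ref{e2.18}) together with the hypothesis at $z = y$ when $g(z) > 0$.

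For (ii), I fix $x < y$ and iterate this inequality along the weak ascending ladder epochs of $\{X_n\}$: put $\sigma_0 = 0$ and $\sigma_{k+1} = \inf\{n > \sigma_k : X_n \ge X_{\sigma_k}\}$. By the strong Markov property, the pairs $(X_{\sigma_k} - X_{\sigma_{k-1}},\, \sigma_k - \sigma_{k-1})$ are i.i.d.\ with common law $\mathcal{L}(X_{T_0}, T_0 \mid X_0 = 0,\, T_0 < \infty)$ on the sub-probability event $\{\sigma_k < \infty\}$. One checks that $\tau_y = \sigma_\kappa$ where $\kappa := \inf\{k \ge 1 : X_{\sigma_k} \ge y\}$, since the first time $\{X_n\}$ reaches $y$ (starting from $x < y$) is necessarily a weak ascending ladder epoch. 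Starting from the case $z = x$ of the displayed inequality, splitting according to whether $X_{\sigma_1} \ge y$ or $X_{\sigma_1} < y$, and then applying the same inequality conditionally at $z = X_{\sigma_1}$ in the latter case (via strong Markov), an induction on $k$ yields
$$
g(x) \le \sum_{j=1}^{k} E\!\left[e^{-q\sigma_j} g(X_{\sigma_j})\mathbf{1}_{\{\kappa = j\}}\right] + E\!\left[e^{-q\sigma_k} g(X_{\sigma_k})\mathbf{1}_{\{\kappa > k,\, \sigma_k < \infty\}}\right].
$$
As $k \to \infty$, the first sum increases monotonically to $E_x(e^{-q\tau_y} g(X_{\tau_y})\mathbf{1}_{\{\tau_y < \infty\}})$, which is the desired bound on $g(x)$ from (ii) modulo the remainder.

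The main obstacle will be showing that the remainder term vanishes. Since $g$ is increasing and $X_{\sigma_k} < y$ on the indicated event, this remainder is at most $g(y)\, E[e^{-q\sigma_k} \mathbf{1}_{\{\sigma_k < \infty\}}]$. When $q > 0$, this is bounded by $g(y) e^{-q k}$ because $\sigma_k \ge k$. When $q = 0$, I would distinguish two cases: if $P(T_0 < \infty) < 1$, the bound $g(y)[P(T_0 < \infty)]^k$ gives geometric decay; if $P(T_0 < \infty) = 1$, then $P(\xi > 0) > 0$ forces $P(H_1 > 0) > 0$, and applying Borel--Cantelli (or the SLLN) to the i.i.d.\ nonnegative increments $H_j := X_{\sigma_j} - X_{\sigma_{j-1}}$ gives $X_{\sigma_k} \to \infty$ almost surely, so $P(X_{\sigma_k} < y) \to 0$. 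This finishes (ii). Finally, (i) follows from (ii) by a strong Markov decomposition at $T_a$: on $\{T_a < \infty,\, X_{T_a} \ge y\}$ one has $T_y = T_a$ and the two sides of (i) coincide; on $\{T_a < \infty,\, X_{T_a} < y\}$, (ii) applied at the new starting state $X_{T_a} < y$ gives $E_{X_{T_a}}(e^{-q\tau_y} g(X_{\tau_y}) \mathbf{1}) \ge g(X_{T_a})$; combining both cases yields the inequality of (i).
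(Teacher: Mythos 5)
Your proof is correct, and it uses the same core device as the paper --- iterating the one-step inequality
$g(z)\le E_z\bigl(e^{-qT_z}g(X_{T_z})\mathbf{1}_{\{T_z<\infty\}}\bigr)$ for $z\le y$ (from (\ref{e2.18}) and (\ref{e2.3}))
along the weak ascending ladder epochs --- but organizes the argument in the reverse logical order. You prove (ii) first by telescoping the ladder inequality starting from $x$, splitting at each step on whether the new ladder height has reached $y$, and showing the remainder vanishes; then you deduce (i) from (ii) by a strong Markov split at $T_a$. The paper instead proves (i) first: it sets $J_0=T_a$, iterates the ladder construction to build $J_n$, considers the auxiliary sequence $L_n=\min\{J_n,T_y\}$, shows
$E_x\bigl(e^{-qL_n}g(X_{L_n})\mathbf{1}_{\{L_n<\infty\}}\bigr)$ is increasing in $n$ and converges to
$E_x\bigl(e^{-qT_y}g(X_{T_y})\mathbf{1}_{\{T_y<\infty\}}\bigr)$ (via the dichotomy $\overline{\lim}_n X_n=+\infty$ a.s.\ or $\lim_n X_n=-\infty$ a.s.\ together with Fatou/dominated convergence), and reads off (i) from $L_0=T_a$; part (ii) is then a two-line consequence of (i) and (\ref{e2.18}). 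Your remainder analysis --- $q>0$ gives geometric decay from $\sigma_k\ge k$; $q=0$, $P(T_0<\infty)<1$ gives geometric decay in the survival probability; $q=0$, $P(T_0<\infty)=1$ forces $X_{\sigma_k}\to\infty$ a.s.\ --- is in fact an explicit rephrasing of the paper's dichotomy, since for $q=0$ the condition $P(T_0<\infty)=1$ is equivalent to $\overline{\lim}_n X_n=+\infty$ a.s. Both routes are sound; yours trades the paper's stopped-process bookkeeping $L_n$ for a telescope-plus-vanishing-remainder computation, and makes it transparent that (ii) is the basic estimate with (i) a short corollary, whereas the paper's ordering makes (i) the workhorse.
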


\begin{lem}\label{ld}
Assume $P(\xi>0)>0$. Let $g:\mathbb{R}\to[0,\infty)$ be nonconstant, increasing and logconcave. Suppose that
\begin{equation}\label{e9}
\frac{E_y\left(e^{-qT_y}g(X_{T_y})\mathbf{1}_{\left\{T_y<\infty\right\}}\right)}{g(y)}\le1\;\;\mbox{for some}\;\;y>x_0:=\inf\{s:g(s)>0\}.
\end{equation}
Then the following hold.
\begin{enumerate}
\item[\upshape(i)] $E_x\left(e^{-qT_y}g(X_{T_y})\mathbf{1}_{\left\{T_y<\infty\right\}}\right)\ge E_x\left(e^{-qT_a}g(X_{T_a})\mathbf{1}_{\left\{T_a<\infty\right\}}\right)$ for $x\in\mathbb{R}$ and $a\in(y,\infty)$.
\item[\upshape(ii)] $g(y)\ge E_y\left(e^{-q\tau_a}g(X_{\tau_a})\mathbf{1}_{\left\{\tau_a<\infty\right\}}\right)$ for $a\in(y,\infty)$.
\end{enumerate}
\end{lem}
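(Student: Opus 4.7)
The plan is to establish (ii) first (in a slightly strengthened form that also yields (i)), via a supermartingale argument built on the ladder decomposition of the walk above level $y$. The key input, used uniformly at every height above $y$, is the monotonicity (\ref{e2.18}): it propagates the hypothesis (\ref{e9}) to $G(z)/g(z) \le G(y)/g(y) \le 1$ for all $z \ge y$, where $G(z) := E_z(e^{-qT_z}g(X_{T_z})\mathbf{1}_{\{T_z<\infty\}})$.

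For the strengthened (ii), I claim that $\phi_a(w) := E_w(e^{-q\tau_a}g(X_{\tau_a})\mathbf{1}_{\{\tau_a<\infty\}}) \le g(w)$ for every $w \ge y$ and every $a > y$. The case $w \ge a$ is immediate since $\tau_a = 0$ and $\phi_a(w) = g(w)$. For $y \le w < a$, I would decompose the walk starting at $w$ into its successive ``return-to-current-height'' excursions: set $\eta_0 = 0$, $X_{\eta_0} = w$, and recursively $\eta_{k+1} = \inf\{n > \eta_k : X_n \ge X_{\eta_k}\}$. By spatial homogeneity and the strong Markov property, the pairs $(\Delta_k, Y_k) := (\eta_k - \eta_{k-1},\, X_{\eta_k} - X_{\eta_{k-1}})$ are i.i.d.\ with the law of $(T_0, X_{T_0})$ under $P_0$ (with the convention $\Delta_k = \infty$, $Y_k = 0$ on non-return). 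Since any $n$ with $X_n \ge a$ strictly exceeds all earlier $X_j$ (which are $< a$ for $j < T_a$), $T_a$ is one of the $\eta_k$: namely $T_a = \eta_K$ with $K = \inf\{k \ge 1 : w + Y_1 + \cdots + Y_k \ge a\}$.

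Define
\begin{equation*}
Z_k := e^{-q(\Delta_1 + \cdots + \Delta_k)}\, g(w + Y_1 + \cdots + Y_k)\, \mathbf{1}_{\{\Delta_1<\infty,\ldots,\Delta_k<\infty\}}, \quad Z_0 := g(w).
\end{equation*}
Writing $w_k := w + Y_1 + \cdots + Y_k \ge w \ge y$, the one-step conditional expectation is $Z_k \cdot G(w_k)/g(w_k) \le Z_k$, using (\ref{e2.18}) and the hypothesis. So $\{Z_k\}$ is a nonnegative supermartingale, and optional sampling combined with Fatou's lemma yields
\begin{equation*}
E_w\bigl(e^{-q\tau_a} g(X_{\tau_a}) \mathbf{1}_{\{\tau_a<\infty\}}\bigr) = E\bigl[Z_K \mathbf{1}_{\{K<\infty\}}\bigr] \le E[Z_0] = g(w),
\end{equation*}
proving the claim; (ii) is the case $w = y$. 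For (i), apply the strong Markov property at $T_y$ under $P_x$: since $a > y$, the remaining trip from $X_{T_y} \ge y$ to $[a,\infty)$ contributes exactly $\phi_a(X_{T_y})$, giving
\begin{equation*}
E_x\bigl(e^{-qT_a}g(X_{T_a})\mathbf{1}_{\{T_a<\infty\}}\bigr) = E_x\bigl[e^{-qT_y} \mathbf{1}_{\{T_y<\infty\}}\, \phi_a(X_{T_y})\bigr] \le E_x\bigl[e^{-qT_y} \mathbf{1}_{\{T_y<\infty\}}\, g(X_{T_y})\bigr],
\end{equation*}
where the inequality comes from the strengthened (ii) applied with $w = X_{T_y}$.

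The main technical obstacle I anticipate is the bookkeeping for the ladder decomposition: verifying that $(\Delta_k, Y_k)$ are genuinely i.i.d.\ (including the possibility $\{T_0 = \infty\}$ and the fact that these are weak, not strict, ladder epochs since $Y_k = 0$ is allowed), and justifying optional sampling when expectations may be infinite. Both reduce to standard manipulations with nonnegative supermartingales. The conceptual heart is simply that (\ref{e2.18}) converts the one-step hypothesis at height $y$ into a uniform one-step supermartingale inequality at every height $\ge y$.
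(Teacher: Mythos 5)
Your proof is correct and follows essentially the same route as the paper's, which establishes Lemma~\ref{ld}(i) ``along the lines of'' Lemma~\ref{lc}(i) — i.e., the weak ascending ladder decomposition plus the one-step supermartingale inequality that (\ref{e2.18}) and the hypothesis (\ref{e9}) force at every height $\ge y$, with Fatou to pass to the limit — and then reads off (ii) as a corollary of (i) at $x=y$. The only difference is the bookkeeping order: you prove the strengthened (ii) ($\phi_a(w)\le g(w)$ for all $w\ge y$) first and then obtain (i) by conditioning at $T_y$, whereas the paper's template runs the ladder chain from $L_0=T_y$ directly down to $T_a$; the substance is identical.
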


\begin{lem}\label{la}
Assume $P(\xi>0)>0$ and $\phi(\lambda):=E(e^{\lambda\xi})<\infty$ for all $\lambda\ge0$. Let $g:\mathbb{R}\to[0,\infty)$ be nonconstant, increasing, logconcave and right-continuous. Define $u$ as in $(\ref{g1})$. Suppose $u<\infty$. Then $\tau_u$ is optimal. Moreover, the value function $V(x)$ satisfies $V(x)>g(x)$ for $x<u$ and $V(x)=g(x)$ for $x\ge u$.
\end{lem}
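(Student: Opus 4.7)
The plan is to verify that $f(x):=E_x\bigl(e^{-q\tau_u}g(X_{\tau_u})\mathbf{1}_{\{\tau_u<\infty\}}\bigr)$ coincides with $V(x)$. One direction, $f\le V$, is immediate since $\tau_u\in\mathcal{M}$; for the reverse I would invoke Lemma \ref{lb} after verifying that $f$ is a nonnegative excessive majorant of $g$. I first need finiteness: logconcavity and the definition of $\beta$ give $g(y)\le C_{\varepsilon} e^{(\beta+\varepsilon)y}$ for large $y$ and each $\varepsilon>0$, and combined with $\phi(\beta+\varepsilon)<\infty$ (from the moment hypothesis) this yields $f(x)<\infty$ via standard exponential first-passage estimates for random walks.

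The majorant property $f\ge g$ is handled in two cases. On $[u,\infty)$, $\tau_u\equiv 0$ and $f=g$. For $x_0<x<u$, Remark 2.2 together with $(\ref{e2.17})$ shows that the ratio in $(\ref{e2.18})$ equals $1$ at $y=u$, so hypothesis $(\ref{e2.3})$ of Lemma \ref{lc} holds there; Lemma \ref{lc}(ii) with $y=u$ then gives $g(x)\le f(x)$. The remaining degenerate cases ($x<x_0$, and $u\in\{x_0,-\infty\}$) are trivial.

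The excessivity $f(x)\ge e^{-q}E[f(x+\xi)]$ is the heart of the argument. On $(-\infty,u)$, the strong Markov property at time $1$ (legitimate because $\tau_u\ge 1$ when $X_0<u$) gives the equality $f(x)=e^{-q}E[f(x+\xi)]$. On $[u,\infty)$, $f=g$ and Lemma \ref{lg} rewrites the excessivity condition as
\[
g(x)\ge E_x\bigl(e^{-qT_u}g(X_{T_u})\mathbf{1}_{\{T_u<\infty\}}\bigr),\qquad x\ge u.\tag{$\star$}
\]
At $x=u$, $(\star)$ is exactly $(\ref{e2.17})$. For $x>u$ my plan is to split $\{T_u<\infty\}$ into $\{T_u=T_x\}$ (on which $X_{T_u}\ge x$) and $\{T_u<T_x\}$ (on which $u\le X_{T_u}<x$): the first contribution is bounded by $E_x(e^{-qT_x}g(X_{T_x})\mathbf{1}_{\{T_x<\infty\}})\le g(x)$ via the ratio condition in $(\ref{e2.18})$ at $x$, while the second is controlled by applying the strong Markov at $T_u$ and Lemma \ref{ld}(ii) at the random starting level $y=X_{T_u}\ge u$ (whose ratio is also $\le 1$), using the logconcavity of $g$ to couple the overshoot past $u$ with the gap $x-u$. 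Once $(\star)$ is in hand, Lemma \ref{lb} yields $V\le f$, hence $V=f$ and $\tau_u$ is optimal; the strict inequality $V(x)>g(x)$ for $x<u$ comes from the ratio in $(\ref{e2.18})$ being strictly greater than $1$ on $(x_0,u)$, which makes Lemma \ref{lc}(ii) in the majorant step strict.

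The main obstacle is $(\star)$ for $x>u$. Since the ``crossing'' piece essentially saturates the ratio bound $g(x)\rho(x)$, the ``interior'' piece must be estimated by a tightly matching strong-Markov/logconcavity reduction: any slack in the Lemma \ref{ld}(ii) comparison at the random level $X_{T_u}$ must be balanced against the deficit $g(x)\bigl(1-\rho(x)\bigr)$ left by the crossing term, and this bookkeeping of overshoots past $u$ is the delicate technical step.
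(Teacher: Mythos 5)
Your overall framework is the same as the paper's: set $f(x)=E_x\bigl(e^{-q\tau_u}g(X_{\tau_u})\mathbf{1}_{\{\tau_u<\infty\}}\bigr)$, check that $f$ is a nonnegative excessive majorant of $g$, and invoke Lemma~\ref{lb}. The majorant step via Lemma~\ref{lc}(ii) and the equality part of excessivity on $(-\infty,u)$ via Lemma~\ref{lg} are both correct. However, the crucial inequality $(\star)$, i.e.\ $g(x)\ge E_x\bigl(e^{-qT_u}g(X_{T_u})\mathbf{1}_{\{T_u<\infty\}}\bigr)$ for $x>u$, is exactly where your sketch breaks down, and this is where essentially all the work of the lemma lies.

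Your proposed decomposition on $\{T_u<T_x\}$ versus $\{T_u=T_x\}$ does not lead anywhere. The bound on the crossing term, $E_x\bigl(e^{-qT_x}g(X_{T_x})\mathbf{1}_{\{T_u=T_x<\infty\}}\bigr)\le E_x\bigl(e^{-qT_x}g(X_{T_x})\mathbf{1}_{\{T_x<\infty\}}\bigr)\le g(x)$, already uses up the entire budget $g(x)$, so the interior term $E_x\bigl(e^{-qT_u}g(X_{T_u})\mathbf{1}_{\{T_u<T_x\}}\bigr)>0$ produces a genuine excess that must be absorbed by \emph{tightening} the crossing bound, not by adding a separate estimate. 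Moreover, your plan to ``apply the strong Markov property at $T_u$ and Lemma~\ref{ld}(ii) at the random level $y=X_{T_u}$'' is incoherent here: the expectation you want to bound already terminates at $T_u$, so there is no continuation of the walk to which Lemma~\ref{ld}(ii) (a statement about first passage \emph{after} restarting at level $y$) can be applied; and even if there were, that lemma gives an upper bound on \emph{future} value, which does not reduce the present term. Worse, Lemma~\ref{ld}(i) applied at level $u$ shows $E_x\bigl(e^{-qT_u}g(X_{T_u})\mathbf{1}_{\{T_u<\infty\}}\bigr)\ge E_x\bigl(e^{-qT_x}g(X_{T_x})\mathbf{1}_{\{T_x<\infty\}}\bigr)$, so the quantity you need to dominate by $g(x)$ is actually \emph{larger} than the quantity the ratio condition controls -- your split-and-compare scheme is pushing in the wrong direction.

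The paper closes this gap with a construction your sketch does not anticipate: for $x>u$ it introduces the one-parameter family of logconcave majorants $g_w(y)=e^{h_w(y)}$, $w\in[0,\infty]$, interpolating between $g$ and the tangent exponential $g_\infty(y)=e^{h(x)+c(y-x)}$ with $c=h'(x-)$. When $c\le\alpha$ (with $\alpha$ the largest root of $\phi(\lambda)=e^q$) the process $e^{-qn}g_\infty(X_n)$ is a supermartingale and the bound is immediate; when $c>\alpha$ the function $f(w)=E_x\bigl(e^{-qT_x}g_w(X_{T_x})\mathbf{1}_{\{T_x<\infty\}}\bigr)/g(x)$ satisfies $f(0)\le1\le f(\infty)$, an intermediate-value argument yields $w'$ with $f(w')=1$, and then Lemma~\ref{lc}(i) applied to $g_{w'}$ delivers $E_x\bigl(e^{-qT_u}g_{w'}(X_{T_u})\mathbf{1}_{\{T_u<\infty\}}\bigr)\le E_x\bigl(e^{-qT_x}g_{w'}(X_{T_x})\mathbf{1}_{\{T_x<\infty\}}\bigr)=g(x)$. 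Note the direction: Lemma~\ref{lc}(i) requires the ratio at level $x$ to be $\ge1$, which holds precisely for the tuned majorant $g_{w'}$, not for $g$ itself -- this is exactly the mechanism that makes the comparison go the right way where your decomposition cannot. You should also note that the paper first disposes separately of the case $q=0$, $E(\xi)\ge0$ (in which $g$ is forced to be eventually constant and $\tau_u$ is trivially optimal), a case your sketch silently skips.
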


\begin{lem}\label{le}
Assume $P(\xi>0)>0$. Let $g:\mathbb{R}\to[0,\infty)$ be nonconstant, increasing, logconcave and right-continuous. Define $u$ as in $(\ref{g1})$. Suppose there exist $x'\in\mathbb{R}$ and $c>0$ such that $g(x)=c$ for $x\ge x'$. Then $u\le x'$ and $\tau_u$ is optimal. Moreover, the value function $V(x)$ satisfies $V(x)>g(x)$ for $x<u$ and $V(x)=g(x)$ for $x\ge u$.
\end{lem}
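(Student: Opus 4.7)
The plan is to reduce Lemma \ref{le} to Lemma \ref{la} via truncation of the random walk's jumps, using the boundedness of $g$ to circumvent the exponential-moment hypothesis in Lemma \ref{la}. First, I note that $u \leq x'$: for any $x \geq x'$, $g(x) = c$ and $g(X_{T_x}) \leq c$, so the ratio appearing in (\ref{g1}) is at most $P_x(T_x<\infty) \leq 1$.

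For each integer $M > \max(x',0)$, introduce the truncated increment $\xi^{(M)} := \xi \wedge M$ and the associated random walk $X^{(M)}$. Then $E(e^{\lambda\xi^{(M)}}) \leq e^{\lambda M} < \infty$ for every $\lambda \geq 0$, and $P(\xi^{(M)}>0) = P(\xi>0) > 0$. Applying Lemma \ref{la} to $X^{(M)}$ (with the same reward $g$) yields a threshold $u_M \leq x'$, optimality of $\tau_{u_M}^{(M)} := \inf\{n\ge 0: X_n^{(M)} \ge u_M\}$ for the $M$-walk, and a value function $V^{(M)}$ satisfying $V^{(M)}>g$ on $(-\infty,u_M)$ and $V^{(M)}=g$ on $[u_M,\infty)$. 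Coupling so that $\xi^{(M_1)} \leq \xi^{(M_2)}$ a.s.\ for $M_1 < M_2$ gives $V^{(M_1)} \leq V^{(M_2)}$ (by monotonicity of $g$ along the pathwise inequality $X^{(M_1)}_n \leq X^{(M_2)}_n$); a short contradiction argument then shows $u_M$ is non-decreasing in $M$, so $u_M \uparrow u^*$ for some $u^* \leq x'$.

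The remaining step is to pass to the limit $M\to\infty$ and to identify $u^*$ with $u$. With the same coupling, $\xi^{(M)} \uparrow \xi$ a.s., hence $X_n^{(M)} \uparrow X_n$ a.s.\ for each $n$; boundedness of $g$ by $c$ then permits free use of dominated convergence. The ratio in (\ref{g1}) for $X^{(M)}$ converges pointwise to that for $X$ (noting that, for fixed $x$ and a.e.\ sample path, all truncations become inactive on $\{0,\dots,T_x\}$ once $M$ is large enough), which combined with the monotonicity (\ref{e2.18}) identifies $u^*$ with $u$. Analogous DCT arguments give $V^{(M)}(x) \uparrow V(x)$ and $E_x(e^{-q\tau_{u_M}^{(M)}}g(X^{(M)}_{\tau_{u_M}^{(M)}})\mathbf{1}_{\{\tau_{u_M}^{(M)}<\infty\}}) \to E_x(e^{-q\tau_u}g(X_{\tau_u})\mathbf{1}_{\{\tau_u<\infty\}})$; passage to the limit in the conclusions of Lemma \ref{la} applied to $X^{(M)}$ then delivers the optimality of $\tau_u$ together with the claimed strict inequality $V>g$ on $(-\infty,u)$ and equality $V=g$ on $[u,\infty)$. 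The principal obstacle is the delicate handling of overshoots in the simultaneous passage to the limit of the thresholds $u_M$, the stopping times $\tau_{u_M}^{(M)}$, and the associated stopping positions $X^{(M)}_{\tau_{u_M}^{(M)}}$ (each depending jointly on $M$ and the sample path); right-continuity of $g$ and the established monotonicity in $M$ are essential.
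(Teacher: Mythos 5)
Your overall strategy coincides with the paper's: truncate the increments at level $k$ (the paper writes $\xi_i^{(k)}=\min\{\xi_i,k\}$, you write $\xi^{(M)}=\xi\wedge M$), note the truncated walk satisfies the exponential-moment hypothesis of Lemma~\ref{la}, obtain increasing thresholds $u_k\nearrow u_\infty\le x'$, and pass to the limit. The preliminary observation $u\le x'$, the monotonicity of $u_k$, and the use of boundedness of $g$ to control the limit are all as in the paper.

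However, the proposal leaves a genuine gap precisely at the step you flag as ``the principal obstacle,'' and the tools you invoke there are not sufficient. You claim that pointwise convergence of the ratio in $(\ref{g1})$ for $X^{(M)}$ to the ratio for $X$, ``combined with the monotonicity $(\ref{e2.18})$, identifies $u^*$ with $u$.'' This gives you one direction easily: for $x>u^*$, ratio$^{(M)}(x)\le1$ for all large $M$, so ratio$(x)\le1$ and $u\le u^*$. But pointwise convergence plus monotonicity in $x$ cannot give $u^*\le u$: for $x<u^*$ you only get ratio$^{(M)}(x)>1$ for large $M$, hence ratio$(x)\ge1$ in the limit, and ratio$(x)=1$ does not exclude $x\ge u$ (the set in $(\ref{g1})$ is defined by $\le1$, not $<1$). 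The paper closes this gap by a different route: after first establishing optimality of $\tau_{u_\infty}$ for the original walk (its Step~1), it argues by contradiction that $u<u_\infty$ is impossible by combining Lemma~\ref{ld}(ii) (which yields $g(x)\ge V(x)$ from ratio$(x)\le1$) with the strict inequality $g(x)<V_k(x)\le V(x)$ from Lemma~\ref{la} applied to the truncated walk; the case $u>u_\infty$ is handled similarly. Lemma~\ref{ld} does not appear in your proposal and the argument does not seem to go through without it (or some equivalent one-step comparison lemma).

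Similarly, the a.s.\ convergence $e^{-q\tau^{(k)}_{u_k}}g(X^{(k)}_{\tau^{(k)}_{u_k}})\mathbf{1}_{\{\tau^{(k)}_{u_k}<\infty\}}\to e^{-q\tau_{u_\infty}}g(X_{\tau_{u_\infty}})\mathbf{1}_{\{\tau_{u_\infty}<\infty\}}$ that you fold under ``analogous DCT arguments'' is itself the content that requires work, since the threshold, the stopping time, and the overshoot all move simultaneously with $k$. The paper's proof separates $\{\tau_{u_\infty}<\infty\}$ (where, pathwise, truncation becomes inactive for large $k$ \emph{and} $u_k$ eventually exceeds the pre-passage running maximum, so that $\tau^{(k)}_{u_k}=\tau_{u_\infty}$ exactly) from $\{\tau_{u_\infty}=\infty\}$ (where one invokes the dichotomy $\overline{\lim}X_n=+\infty$ a.s.\ or $\lim X_n=-\infty$ a.s., and in the latter case shows $\tau^{(k)}_{u_k}=\infty$ for large $k$). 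Your parenthetical remark covers only the analogue for a fixed level $x$, not for the moving thresholds $u_k$. In short: right idea, but the two limiting steps that you correctly identify as delicate are not actually carried out, and at least one of them needs Lemma~\ref{ld}, which is absent from your toolkit.
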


Lemma \ref{lg} follows easily by conditioning on $X_1=x+\xi$. Lemma \ref{lb} is a standard result (see  \cite[Lemma 5]{ref8}). The proofs of Lemmas \ref{lc}--\ref{le} are relegated to Section 6. We are now ready to prove Theorem \ref{t2.1}.

\begin{proof}[\bf Proof of Theorem \ref{t2.1}(i)]
Let $\{b_k\}_{k\ge1}$ be an increasing sequence such that $b_1>x_0:=\inf\{s:g(s)>0\}$ and $\lim_{k\to\infty}b_k=\infty$. For each $k\ge1$, let $g_k(x)=g(x\wedge b_k)$ for $x\in\mathbb{R}$ where $x\wedge b_k:=\min\{x,b_k\}$, and let
\begin{equation}\label{e13}
V_k(x)=\sup_{\tau\in\mathcal{M}}E_x(e^{-q\tau}g_k(X_\tau)\mathbf{1}_{\{\tau<\infty\}}).
\end{equation}
Then for $k\ge1$, $g_k(x)$ is nonconstant, increasing, logconcave and right-continuous. Since $g_k(x)$ is increasing in $k$, so is $V_k(x)$. Note that $g_k(x)=g(b_k)>0$ for $x\ge b_k$ and $g_k(x)=g(x)$ for $x<b_k$. We have that $V_k(x)\le g(b_k)$ for $x\in\mathbb{R}$ and by Lemma \ref{le} that $\tau_{u_k}$ is optimal for the optimal stopping problem (\ref{e13}) with reward function $g_k$, where
\begin{equation}\label{v2}
u_k:=\inf\left\{x\in\mathbb{R}:\frac{E_x(e^{-qT_x}g_k(X_{T_x})\mathbf{1}_{\{T_x<\infty\}})}{g_k(x)}\le1\right\}\le b_k<\infty.
\end{equation}
We claim that $u_k$ is increasing in $k$. If $u_{k+1}\ge b_k$, then $u_{k+1}\ge u_k$ clearly. In case $u_{k+1}<b_k$, we have $V_k(u_{k+1})\le V_{k+1}(u_{k+1})=g_{k+1}(u_{k+1})=g(u_{k+1})=g_k(u_{k+1})$, implying that $u_{k+1}\ge u_k$.

Let $u_\infty=\lim_{k\to\infty}u_k$ and $V_\infty(x)=\lim_{k\to\infty}V_k(x)$. Then it is easily seen that $V(x)=V_\infty(x)$. Now we prove in three steps that $\tau_u$ is optimal if $-\infty\le u<\infty$. We show in step 1 that $u_\infty\le u\;(<\infty)$, in step 2 that $\tau_{u_\infty}$ is optimal, and in step 3 that $u_\infty\ge u$.

{\it Step 1.} To prove $u_\infty\le u$, suppose to the contrary that $u<u_\infty$. Choose an $x$ and a (large) $k$ such that $u<x<u_k$ and $b_k\ge x$. We have
\begin{align*}
g_k(x)&=g(x)\\
&\ge E_x\left(e^{-qT_x}g\left(X_{T_x}\right)\mathbf{1}_{\left\{T_x<\infty\right\}}\right)\;\;\mbox{(by (\ref{g1}))}\\
&\ge E_x\left(e^{-qT_x}g_k\left(X_{T_x}\right)\mathbf{1}_{\left\{T_x<\infty\right\}}\right),
\end{align*}
which together with (\ref{v2}) implies that $x\ge u_k$, a contradiction. This proves that $u_\infty\le u$.

{\it Step 2.} To prove that $\tau_{u_\infty}$ is optimal, it suffices to show that
\begin{equation}\label{e14}
E_x\left(e^{-q\tau_{u_\infty}}g(X_{\tau_{u_\infty}})\mathbf{1}_{\{\tau_{u_\infty}<\infty\}}\right)\ge\sup_{k}V_k(x)\;(=V_\infty(x)=V(x))\;\;\mbox{for all}\;\;x.
\end{equation}
If $u_\infty=-\infty$, then $u_\infty=u_k=-\infty$ for all $k$, implying, for all $x$, that
$$
V_k(x)=g_k(x)\le g(x)=E_x\left(e^{-q\tau_{u_\infty}}g(X_{\tau_{u_\infty}})\mathbf{1}_{\{\tau_{u_\infty}<\infty\}}\right)\;\;\mbox{for all}\;\;k,
$$
establishing (\ref{e14}) for the case $u_\infty=-\infty$.

Suppose $-\infty<u_\infty\;(\le u<\infty)$. Since $u_\infty\ge u_k$ for all $k$, we have for $x\ge u_\infty$,
$$
E_x\left(e^{-q\tau_{u_\infty}}g(X_{\tau_{u_\infty}})\mathbf{1}_{\{\tau_{u_\infty}<\infty\}}\right)=g(x)\ge g_k(x)=V_k(x)\;\;\mbox{for all}\;\;k.
$$
It remains to prove (\ref{e14}) for $x<u_\infty$. Note that $u_\infty\ge u_k\ge x_0$ for all $k$. If $u_\infty=x_0$, then $u_k=u_\infty=x_0$ for all $k$, so that
$$
V_k(x)=E_x\left(e^{-q\tau_{u_\infty}}g_k(X_{\tau_{u_\infty}})\mathbf{1}_{\{\tau_{u_\infty}<\infty\}}\right)\le E_x\left(e^{-q\tau_{u_\infty}}g(X_{\tau_{u_\infty}})\mathbf{1}_{\{\tau_{u_\infty}<\infty\}}\right)\;\;\mbox{for all}\;\;k,
$$
proving (\ref{e14}).

Now suppose that $u_\infty>x_0$. To prove (\ref{e14}) for $x<u_\infty$, let $k_0$ be so large that $x<u_{k_0}$, $2u_{k_0}-u_\infty>x_0$ and $b_{k_0}>u_\infty$. Thus for all $k\ge k_0$, we have $x<u_k$, $2u_k-u_\infty>x_0$ and $b_k>u_\infty$. For $k\ge k_0$, let
\begin{align*}
U_k(x)&:=E_x\left(e^{-q\tau_{u_\infty}}g_k(X_{\tau_{u_\infty}})\mathbf{1}_{\{\tau_{u_\infty}<\infty\}}\right),\\ \varepsilon_k&:=u_\infty-u_k\;\;(\ge0),\\
\tau'=\tau'(k)&:=\inf\{n\ge0:X_n\ge u_k-\varepsilon_k\},\\ \mbox{and}\;\;V'_k(y)&:=E_y\left(e^{-q\tau'}g_k(X_{\tau'})\mathbf{1}_{\{\tau'<\infty\}}\right)\;\;(\le V_k(y))\;\;\mbox{for}\;\;y\in\mathbb{R}. \end{align*}
Since $\mathcal{L}(\tau_{u_\infty},X_{\tau_{u_\infty}}\mathbf{1}_{\{\tau_{u_\infty}<\infty\}}\mid X_0=x)=\mathcal{L}(\tau_{u_k},(\varepsilon_k+X_{\tau_{u_k}})\mathbf{1}_{\{\tau_{u_k}<\infty\}}\mid X_0=x-\varepsilon_k)$, we have
\begin{align}
U_k(x)&=E_{x}\left(e^{-q\tau_{u_\infty}}g_k(X_{\tau_{u_\infty}})\mathbf{1}_{\{\tau_{u_\infty}<\infty\}}\right)\notag\\
&=E_{x-\varepsilon_k}\left(e^{-q\tau_{u_k}}g_k(\varepsilon_k+X_{\tau_{u_k}})\mathbf{1}_{\{\tau_{u_k}<\infty\}}\right)\notag\\
&\ge E_{x-\varepsilon_k}\left(e^{-q\tau_{u_k}}g_k(X_{\tau_{u_k}})\mathbf{1}_{\{\tau_{u_k}<\infty\}}\right)\notag\\
&=V_k(x-\varepsilon_k)\ge V'_k(x-\varepsilon_k).\label{e2.13}
\end{align}
Define $\tau''=\inf\{n\ge0:X_n\ge u_k-x\}$. On $\{\tau''<\infty\}$,
$$
g_k(x-\varepsilon_k+X_{\tau''})\ge g_k(u_k-\varepsilon_k)=g_k(2u_k-u_\infty)=g((2u_k-u_\infty)\wedge b_k)=g(2u_k-u_\infty)>0.
$$
Since $\mathcal{L}(\tau_{u_k},X_{\tau_{u_k}}\mathbf{1}_{\{\tau_{u_k}<\infty\}}\mid X_0=x)=\mathcal{L}(\tau'',(x+X_{\tau''})\mathbf{1}_{\{\tau''<\infty\}}\mid X_0=0)$, we have
\begin{align}
V_k(x)&=E_x\left(e^{-q\tau_{u_k}}g_k(X_{\tau_{u_k}})\mathbf{1}_{\{\tau_{u_k}<\infty\}}\right)\notag\\
&=E\left(e^{-q\tau''}g_k(x+X_{\tau''})\mathbf{1}_{\{\tau''<\infty\}}\right)\notag\\
&=E\left(e^{-q\tau''}\frac{g_k(x+X_{\tau''})}{g_k(x-\varepsilon_k+X_{\tau''})}g_k(x-\varepsilon_k+X_{\tau''})\mathbf{1}_{\{\tau''<\infty\}}\right)\notag\\
&\le\frac{g_k(u_k)}{g_k(2u_k-u_\infty)}E\left(e^{-q\tau''}g_k(x-\varepsilon_k+X_{\tau''})\mathbf{1}_{\{\tau''<\infty\}}\right)\notag\\
&=\frac{g_k(u_k)}{g_k(2u_k-u_\infty)}E_{x-\varepsilon_k}\left(e^{-q\tau'}g_k(X_{\tau'})\mathbf{1}_{\{\tau'<\infty\}}\right)\notag\\
&=\frac{g_k(u_k)}{g_k(2u_k-u_\infty)}V'_k(x-\varepsilon_k),\label{e2.14}
\end{align}
where the inequality follows from the logconcavity of $g_k$ (noting that $X_{\tau''}\ge u_k-x$ on $\{\tau''<\infty\}$) and the second-to-last equality is due to the fact that
$$
\mathcal{L}(\tau'',(x-\varepsilon_k+X_{\tau''})\mathbf{1}_{\{\tau''<\infty\}}\mid X_0=0)=\mathcal{L}(\tau',X_{\tau'}\mathbf{1}_{\{\tau'<\infty\}}\mid X_0=x-\varepsilon_k).
$$
By (\ref{e2.13}) and (\ref{e2.14}), we have (for $k\ge k_0$)
\begin{equation}\label{v3}
U_k(x)\ge\frac{g_k(2u_k-u_\infty)}{g_k(u_k)}V_k(x)=\frac{g(2u_k-u_\infty)}{g(u_k)}V_k(x).
\end{equation}
Letting $k\to\infty$, the right- and left-hand sides of (\ref{v3}) tend, respectively, to $V_\infty(x)$ and
$E_x\left(e^{-q\tau_{u_\infty}}g(X_{\tau_{u_\infty}})\mathbf{1}_{\{\tau_{u_\infty}<\infty\}}\right)$, yielding (\ref{e14}).

{\it Step 3.} We now prove that $u_\infty\ge u$. Suppose to the contrary that $u>u_\infty\;(\ge x_0)$. Then it follows from (\ref{g1}) that
$$
E_{u_\infty}\left(e^{-qT_{u_\infty}}g\left(X_{T_{u_\infty}}\right)\mathbf{1}_{\left\{T_{u_\infty}<\infty\right\}}\right)/g(u_\infty)>1,
$$
which implies by the optimality of $\tau_{u_\infty}$ (established in step 2) that
$$
g(u_\infty)<E_{u_\infty}\left(e^{-qT_{u_\infty}}g\left(X_{T_{u_\infty}}\right)\mathbf{1}_{\left\{T_{u_\infty}<\infty\right\}}\right)\le V(u_\infty)=g(u_\infty),
$$
a contradiction. Thus, $u\le u_\infty$. The proof is complete.
\end{proof}

\begin{proof}[\bf Proof of Theorem \ref{t2.1}(ii)]
For $x,y\in\mathbb{R}$, let
$$
Q_y(x):=E_x\left(e^{-q\tau_y}g(X_{\tau_y})\mathbf{1}_{\{\tau_y<\infty\}}\right)\;\;\mbox{and}\;\;Q(x):=\sup_{y\in\mathbb{R}}Q_y(x)\le V(x).
$$
Note by (\ref{e2}) that $Q(0)=W$. Since $u=\infty$, we have by (\ref{g1})
\begin{equation}\label{v4}
E_y\left(e^{-qT_y}g(X_{T_y})\mathbf{1}_{\{T_y<\infty\}}\right)>g(y)\;\;\mbox{for all}\;\;y>x_0.
\end{equation}
It follows from Lemma \ref{lc} that for $x,y_1,y_2\in\mathbb{R}$ with $y_1<y_2$ and $x_0<y_2$,
$$
E_{x}\left(e^{-qT_{y_1}}g(X_{T_{y_1}})\mathbf{1}_{\{T_{y_1}<\infty\}}\right)\le E_{x}\left(e^{-qT_{y_2}}g(X_{T_{y_2}})\mathbf{1}_{\{T_{y_2}<\infty\}}\right)
$$
and that for $y>\max\{x,x_0\}$,
$$
g(x)\le E_{x}\left(e^{-q\tau_{y}}g(X_{\tau_{y}})\mathbf{1}_{\{\tau_{y}<\infty\}}\right).
$$
In view of $Q_y(x)=g(x)$ for $x\ge y$ and $\tau_y=T_y$ for $X_0=x<y$, we have for $x,y,y'\in\mathbb{R}$ with $y>\max\{x,x_0,y'\}$ that
$$
Q_y(x)\ge\max\left\{g(x),E_{x}\left(e^{-q\tau_{y'}}g(X_{\tau_{y'}})\mathbf{1}_{\{\tau_{y'}<\infty\}}\right)\right\}\ge Q_{y'}(x),
$$
which implies that for $x\in\mathbb{R}$, $\sup_{y'\le x_0}Q_{y'}(x)\le Q_y(x)$ for $y>x_0$ and that
\begin{equation}\label{v5}
Q_y(x)\;\;\mbox{is increasing in}\;\;y\in(x_0,\infty).
\end{equation}
Consequently,
\begin{equation}\label{v6}
Q(x)=\sup_{y}Q_y(x)=\lim_{y\to\infty}Q_y(x).
\end{equation}
Since $\mathcal{L}(\tau_y,X_{\tau_y}\mathbf{1}_{\{\tau_y<\infty\}}\mid X_0=x)=\mathcal{L}(\tau_{y-x},(x+X_{\tau_{y-x}})\mathbf{1}_{\{\tau_{y-x}<\infty\}}\mid X_0=0)$, we have by (\ref{v6})
\begin{align}
Q(x)&=\lim_{y\to\infty}E_{x}\left(e^{-q\tau_{y}}g(X_{\tau_{y}})\mathbf{1}_{\{\tau_{y}<\infty\}}\right)\notag\\
&=\lim_{y\to\infty}E\left(e^{-q\tau_{y-x}}g(x+X_{\tau_{y-x}})\mathbf{1}_{\{\tau_{y-x}<\infty\}}\right)\notag\\
&=\lim_{y\to\infty}E\left(e^{-q\tau_{y}}g(x+X_{\tau_{y}})\mathbf{1}_{\{\tau_{y}<\infty\}}\right)\notag\\
&=\lim_{y\to\infty}E\left[\left(\frac{g(x+X_{\tau_y})}{g(X_{\tau_y})}\right)e^{-q\tau_{y}}g(X_{\tau_{y}})\mathbf{1}_{\{\tau_{y}<\infty\}}\right]\notag\\
&=e^{\beta x}\lim_{y\to\infty}E\left(e^{-q\tau_{y}}g(X_{\tau_{y}})\mathbf{1}_{\{\tau_{y}<\infty\}}\right)\notag\\
&=e^{\beta x}Q(0)=e^{\beta x}W,\label{v7}
\end{align}
where we have used the facts that on $\{\tau_y<\infty\}$, $X_{\tau_y}\ge y$ and
$$
\inf_{z\ge y}\frac{g(x+z)}{g(z)}\le\frac{g(x+X_{\tau_y})}{g(X_{\tau_y})}\le\sup_{z\ge y}\frac{g(x+z)}{g(z)}.
$$
Note that
\begin{align*}
\inf_{z\ge y}\frac{g(x+z)}{g(z)}&=\inf_{z\ge y}e^{h(x+z)-h(z)}\\
&\ge\inf_{z\ge y}e^{h'((x+z)-)x}\\
&=\min\left\{e^{h'((x+y)-)x},e^{\beta x}\right\}\to e^{\beta x}\;\;(\mbox{as}\;\;y\to\infty)
\end{align*}
and
\begin{align*}
\sup_{z\ge y}\frac{g(x+z)}{g(z)}&=\sup_{z\ge y}e^{h(x+z)-h(z)}\\
&\le\sup_{z\ge y}e^{h'(z-)x}\\
&=\max\left\{e^{h'(y-)x},e^{\beta x}\right\}\to e^{\beta x}\;\;\mbox{as}\;\;y\to\infty.
\end{align*}
Suppose $W=\infty$. Then $V(x)\ge Q(x)=e^{\beta x}W=\infty$, implying that $V(x)=\infty=e^{\beta x}W$. In view of $Q_y(x)\to\infty$ as $y\to\infty$, let $y_k$, $k=1,2,\dots$ be such that
$$
Q_{y_k}(x)=E_{x}\left(e^{-q\tau_{y_k}}g(X_{\tau_{y_k}})\mathbf{1}_{\{\tau_{y_k}<\infty\}}\right)>2^k.
$$
Let $\tau$ be a randomized stopping time of threshold type which chooses the threshold $y_k$ with probability $2^{-k}$, $k=1,2,\dots$. Then we have $E_{x}\left(e^{-q\tau}g(X_{\tau})\mathbf{1}_{\{\tau<\infty\}}\right)=\infty$.

Now suppose $W<\infty$. We have by (\ref{v7}) that
$$
e^{\beta x}W=Q(x)=\sup_{y}E_{x}\left(e^{-q\tau_{y}}g(X_{\tau_{y}})\mathbf{1}_{\{\tau_{y}<\infty\}}\right)\ge g(x).
$$
To prove $V(x)=Q(x)$, by Lemma \ref{lb} it suffices to show that
$$
Q(x)\ge E(e^{-q}Q(x+\xi)).
$$
Indeed, we claim that the equality holds, i.e.
\begin{equation}\label{v8}
Q(x)=E(e^{-q}Q(x+\xi)).
\end{equation}
By Lemma \ref{lg}, for $y>x$
$$
Q_y(x)=E_x\left(e^{-q\tau_y}g(X_{\tau_y})\mathbf{1}_{\{\tau_y<\infty\}}\right)=E_x\left(e^{-qT_y}g(X_{T_y})\mathbf{1}_{\{T_y<\infty\}}\right)=E(e^{-q}Q_y(x+\xi)).
$$
By (\ref{v5}), $Q_y(\cdot)$ is increasing in $y\in(x_0,\infty)$. It follows from the monotone convergence theorem and (\ref{v6}) that
\begin{align*}
Q(x)&=\lim_{y\to\infty}Q_y(x)\\
&=\lim_{y\to\infty}E(e^{-q}Q_y(x+\xi))\\
&=E(e^{-q}Q(x+\xi)),
\end{align*}
proving (\ref{v8}). Hence, $V(x)=Q(x)=e^{\beta x}W$.

Finally, we show that for any stopping time $\tau$,
\begin{equation}\label{v9}
V(x)>E_x(e^{-q\tau}g(X_\tau)\mathbf{1}_{\{\tau<\infty\}}),
\end{equation}
which implies that there exists no optimal stopping time. By (\ref{v8}), $\{e^{-qn}V(X_n)\}_{n\ge0}=\{e^{-qn}Q(X_n)\}_{n\ge0}$ is a (positive) martingale. To prove (\ref{v9}) for any stopping time $\tau$, it suffices to assume that $\tau$ is finite with a positive probability. Then
\begin{align*}
V(x)&\ge E_x(e^{-q\tau}V(X_\tau)\mathbf{1}_{\{\tau<\infty\}})\\
&>E_x(e^{-q\tau}g(X_\tau)\mathbf{1}_{\{\tau<\infty\}}),
\end{align*}
where the strict inequality follows from the fact that $V(y)>g(y)$ for all $y$. The proof is complete.
\end{proof}

\begin{rem}
Combining $(\ref{v7})$, $(\ref{v8})$ and $0<W<\infty$ yields $E(e^{\beta\xi})=e^{q}$. This is a consequence of $u=\infty$ and $W<\infty$.
\end{rem}

\begin{proof}[\bf Proof of Corollary \ref{c1}]
(i) If $u=\infty$, it holds trivially that $V(x)=g(x)$ for $x\ge u$. (It is in fact vacuous.) If $u<\infty$, we have by Theorem \ref{t2.1}(i) that $V(x)=g(x)$ for $x\ge u$. To show $V(x)>g(x)$ for $x<u$, note that for any $y$ with $g(y)>0$,
$$
V(x)\ge E_x\left(e^{-q\tau_y}g(X_{\tau_y})\mathbf{1}_{\{\tau_y<\infty\}}\right)>0,
$$
since $P_x(\tau_y<\infty)>0$ by the assumption of $P(\xi>0)>0$. Thus, $V(x)>g(x)$ for all $x$ with $g(x)=0$. For $x<u$ with $g(x)>0$, we have by (\ref{g1}) that
$$
V(x)\ge E_x\left(e^{-qT_x}g(X_{T_x})\mathbf{1}_{\{T_x<\infty\}}\right)>g(x).
$$

(ii) If $u=-\infty$, $V(x)/g(x)=1$ for all $x$. If $u=\infty$ and $W=\infty$, $V(x)/g(x)=\infty$ for all $x$. If $u=\infty$ and $W<\infty$, $V(x)/g(x)=e^{\beta x}W/g(x)$. For $x>x_0$, $e^{\beta x}/g(x)=e^{\beta x-h(x)}$ which is decreasing in $x>x_0$ since $\beta-h'(x-)\le0$. It remains to deal with the case $-\infty<u<\infty$. Since $V(x)/g(x)=\infty$ for $x$ satisfying $g(x)=0$ and $V(x)/g(x)=1$ for $x\ge u$, it suffices to show
$$
\frac{V(x)}{g(x)}\ge\frac{V(y)}{g(y)}\;\;\mbox{for}\;\;x<y\le u\;\;\mbox{with}\;\;g(x)>0.
$$
Since
\begin{equation}\label{c8}
\mathcal{L}(\tau_u,X_{\tau_u}\mathbf{1}_{\{\tau_u<\infty\}}\mid X_0=y)=\mathcal{L}(\tau_{x-y+u},(y-x+X_{\tau_{x-y+u}})\mathbf{1}_{\{\tau_{x-y+u}<\infty\}}\mid X_0=x),
\end{equation}
we have
\begin{align*}
\frac{V(y)}{g(y)}&=\frac{E_y\left(e^{-q\tau_u}g(X_{\tau_u})\mathbf{1}_{\{\tau_u<\infty\}}\right)}{g(y)}\\
&=\frac{E_x\left(e^{-q\tau_{x-y+u}}g(y-x+X_{\tau_{x-y+u}})\mathbf{1}_{\{\tau_{x-y+u}<\infty\}}\right)}{g(y-x+x)}\\
&\le\frac{E_x\left(e^{-q\tau_{x-y+u}}g(X_{\tau_{x-y+u}})\mathbf{1}_{\{\tau_{x-y+u}<\infty\}}\right)}{g(x)}\\
&\le\frac{V(x)}{g(x)},
\end{align*}
where the first inequality follows from the fact that by the logconcavity of $g$,
$$
\frac{g(y-x+X_{\tau_{x-y+u}})}{g(y-x+x)}\le\frac{g(X_{\tau_{x-y+u}})}{g(x)}\;\;\mbox{on}\;\;\{\tau_{x-y+u}<\infty\}.
$$

(iii) Note that $g(x)$ is continuous everywhere except possibly at $x=x_0$. If $u=-\infty$, then $V(x)=g(x)>0$ for all $x$, so that $V(x)$ is continuous everywhere. If $u=\infty$ and $W<\infty$, then $V(x)=e^{\beta x}W$, which is continuous everywhere. Suppose $-\infty<u<\infty$ and $u>x_0$. Since $V(x)=g(x)$ for $x\ge u$, $V(x)$ is continuous at $x\in(u,\infty)$. Moreover, $V(u+)=g(u+)=g(u)=V(u)$, and
$$
\lim_{x\uparrow u}V(x)\ge\lim_{x\uparrow u}g(x)=g(u-)=g(u)=V(u),
$$
showing by the monotonicity of $V(x)$ that $V(x)$ is continuous at $x=u$. To show that $V(x)$ is continuous at $x\in(-\infty,u)$, consider $x<y<u$ with $x-y+u>x_0$. We have by (\ref{c8})
\begin{align*}
\frac{V(y)}{g(u)}&=\frac{E_y\left(e^{-q\tau_u}g(X_{\tau_u})\mathbf{1}_{\{\tau_u<\infty\}}\right)}{g(u)}\\
&=\frac{E_x\left(e^{-q\tau_{x-y+u}}g(y-x+X_{\tau_{x-y+u}})\mathbf{1}_{\{\tau_{x-y+u}<\infty\}}\right)}{g(y-x+x-y+u)}\\
&\le\frac{E_x\left(e^{-q\tau_{x-y+u}}g(X_{\tau_{x-y+u}})\mathbf{1}_{\{\tau_{x-y+u}<\infty\}}\right)}{g(x-y+u)}\;\;\mbox{(by the logconcavity of}\;g)\\
&\le\frac{V(x)}{g(x-y+u)}.
\end{align*}
So for $x<y<u$ with $y-x<u-x_0$,
$$
0\le V(y)-V(x)\le\left(\frac{g(u)}{g(u+x-y)}-1\right)V(x)\le\left(\frac{g(u)}{g(u+x-y)}-1\right)g(u),
$$
which together with $g(u-)=g(u)>0$ implies that $V(x)$ is (uniformly) continuous in $x\in(-\infty,u)$. The proof is complete.
\end{proof}

\section{Optimal stopping for L\'{e}vy processes}
\hspace*{18pt}Let $X=\{X_t\}_{t\ge0}$ be a L\'{e}vy process with initial state $X_0=x\in\mathbb{R}$. For a comprehensive discussion of L\'{e}vy processes, see \cite{ref0}, \cite{ref5} and \cite{ref10}.  As in Section 2, assume $P(X_1>0)>0$ since the case $P(X_1\le0)=1$ is trivial. For $q\ge0$ and  $g:\mathbb{R}\to[0,\infty)$, let
\begin{equation}\label{e3.1}
V(x)=\sup_{\tau\in\mathcal{M}}E_x\left(e^{-q\tau}g(X_\tau)\mathbf{1}_{\{\tau<\infty\}}\right),
\end{equation}
where $\mathcal{M}$ is the class of all stopping times $\tau$ taking values in $[0,\infty]$ with respect to the filtration $\{\mathcal{F}_t\}_{t\ge0}$, $\mathcal{F}_t$ being the natural enlargement of $\sigma\{X_s,0\le s\le t\}$.

To apply Theorem \ref{t2.1} to problem (\ref{e3.1}), we introduce a sequence of optimal stopping problems in discrete time (\textit{cf.} \cite[Chapter 3]{ref17}). For $\ell=1,2,\dots$, let $\mathcal{M}^{(\ell)}$ denote the class of all stopping times in $\mathcal{M}$ taking values in $\{n2^{-\ell}:n=0,1,\dots,\infty\}$ and
\begin{equation}\label{e3.2}
V^{(\ell)}(x)=\sup_{\tau\in\mathcal{M}^{(\ell)}}E_x\left(e^{-q\tau}g(X_\tau)\mathbf{1}_{\{\tau<\infty\}}\right),\;x\in\mathbb{R}.
\end{equation}
Note that by the Markov property of $X$, $V^{(\ell)}(x)$ equals the supremum of $E_x\left(e^{-q\tau}g(X_\tau)\mathbf{1}_{\{\tau<\infty\}}\right)$ over the (smaller) class of all stopping times $\tau$ taking values in $\{n2^{-\ell}:n=0,1,\dots,\infty\}$ such that $\{\tau=n2^{-\ell}\}\in\sigma\{X_{i2^{-\ell}},i=0,1,\dots,n\}\subset\mathcal{F}_{n2^{-\ell}}$. So we can apply Theorem \ref{t2.1} to problem (\ref{e3.2}).

Let $u^{(\ell)}$ be defined as in (\ref{g1}) with $T_x$ replaced by
$$
T_x^{(\ell)}:=2^{-\ell}\inf\{n\ge1:X_{n{2}^{-\ell}}\ge x\},
$$
i.e.,
\begin{equation}\label{b1}
u^{(\ell)}:=\inf\left\{x\in\mathbb{R}:\frac{E_x\left(e^{-qT_x^{(\ell)}}g(X_{T_x^{(\ell)}})\mathbf{1}_{\{T_x^{(\ell)}<\infty\}}\right)}{g(x)}\le1\right\}.
\end{equation}
For $y\in\mathbb{R}\cup\{-\infty\}$, let
$$
\tau_y:=\inf\{t\ge0:X_t\ge y\}\in\mathcal{M}
$$
and
$$
\tau_y^{(\ell)}=\tau^{(\ell)}(y):=2^{-\ell}\inf\{n\ge0:X_{n2^{-\ell}}\ge y\}\in\mathcal{M}^{(\ell)}.
$$
By Theorem \ref{t2.1}, if $u^{(\ell)}<\infty$, then $\tau_{u^{(\ell)}}^{(\ell)}=\tau^{(\ell)}(u^{(\ell)})$ is optimal for (\ref{e3.2}), i.e.
\begin{equation}\label{e3.3}
V^{(\ell)}(x)=E_x\left(e^{-q\tau^{(\ell)}(u^{(\ell)})}g\left(X_{\tau^{(\ell)}(u^{(\ell)})}\right)\mathbf{1}_{\{\tau^{(\ell)}(u^{(\ell)})<\infty\}}\right),\;x\in\mathbb{R}.
\end{equation}
By Lemma \ref{l3.1} below, $u^{(\ell)}$ is increasing in $\ell$. Let
\begin{equation}\label{b2}
u=\lim_{\ell\to\infty}u^{(\ell)}\;\;\mbox{and}\;\;W=\sup_{y\in\mathbb{R}}E\left(e^{-q\tau_y}g(X_{\tau_y})\mathbf{1}_{\{\tau_y<\infty\}}\right).
\end{equation}

\begin{rem}
In Section 2, we used the notations $V(x)$, $u$ and $W$ for the random walk setting. In this section, the same notations are used for the L\'{e}vy process setting. Moreover, we refer to the setting of random walk $\{X_{n2^{-\ell}},n=0,1,\dots\}$ by attaching the superscript $(\ell)$, e.g. $V^{(\ell)}(x)$, $u^{(\ell)}$, $\tau_x^{(\ell)}$.
\end{rem}

\begin{thm}\label{t3.1}
Let $q\ge0$ and $g:\mathbb{R}\to[0,\infty)$ be nonconstant, increasing, logconcave and right-continuous. Let $\beta=\lim_{x\to\infty}h'(x-)$ where $h(x)=\log g(x)$. Define $V(x)$, $u$ and $W$ as in $(\ref{e3.1})$ and $(\ref{b2})$.
\begin{itemize}
\item[\upshape(i)] If $-\infty\le u<\infty$, then $\tau_{u}$ is optimal for $(\ref{e3.1})$, i.e. $V(x)=E_x\left(e^{-q\tau_{u}}g(X_{\tau_{u}})\mathbf{1}_{\{\tau_{u}<\infty\}}\right)$ for all $x\in\mathbb{R}$.
\item[\upshape(ii)] If $u=\infty$, then $V(x)=e^{\beta x}W$ for $x\in\mathbb{R}$. If, in addition, $W=\infty$, then there exist $($randomized$)$ stopping times that yield an infinite expected $($discounted $)$ reward; if $W<\infty$, then $V(x)=\lim_{y\to\infty}E_x\left(e^{-q\tau_y}g(X_{\tau_y})\mathbf{1}_{\{\tau_y<\infty\}}\right)$ and there is no optimal stopping time.
\end{itemize}
\end{thm}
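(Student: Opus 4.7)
The plan is to reduce Theorem~\ref{t3.1} to Theorem~\ref{t2.1} applied to each dyadic skeleton $\{X_{n2^{-\ell}}\}_{n\ge0}$ (itself a random walk) and then pass to the limit $\ell\to\infty$. For each $\ell$ with $u^{(\ell)}<\infty$, Theorem~\ref{t2.1}(i) yields $V^{(\ell)}(x)=E_x[e^{-q\tau^{(\ell)}(u^{(\ell)})}g(X_{\tau^{(\ell)}(u^{(\ell)})})\mathbf{1}_{\{\tau^{(\ell)}(u^{(\ell)})<\infty\}}]$, and for $u^{(\ell)}=\infty$ Theorem~\ref{t2.1}(ii) yields $V^{(\ell)}(x)=e^{\beta x}W^{(\ell)}$ for the natural skeleton analogue $W^{(\ell)}$ of $W$.

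The passage to the limit is driven by two preliminary facts. First, $V^{(\ell)}(x)\uparrow V(x)$: the inequality $V^{(\ell)}\le V$ is trivial from $\mathcal{M}^{(\ell)}\subset\mathcal{M}$, and the reverse follows by approximating any $\tau\in\mathcal{M}$ from above by $\tau^{(\ell)}:=2^{-\ell}\lceil 2^\ell\tau\rceil\in\mathcal{M}^{(\ell)}$, so that $V^{(\ell)}(x)\ge E_x[e^{-q\tau^{(\ell)}}g(X_{\tau^{(\ell)}})\mathbf{1}_{\{\tau^{(\ell)}<\infty\}}]$; right-continuity of the Lévy paths and of $g$ give pointwise convergence of the integrand to $e^{-q\tau}g(X_\tau)\mathbf{1}_{\{\tau<\infty\}}$, and Fatou's lemma yields $\lim_\ell V^{(\ell)}(x)\ge V(x)$. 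Second, Lemma~\ref{l3.1}: Corollary~\ref{c1}(i) applied to the $\ell$-th skeleton gives $u^{(\ell)}=\inf\{x:V^{(\ell)}(x)=g(x)\}$, and since $g\le V^{(\ell)}\le V^{(\ell+1)}$ one has $\{V^{(\ell+1)}=g\}\subset\{V^{(\ell)}=g\}$, whence $u^{(\ell)}\le u^{(\ell+1)}$, so $u=\lim_\ell u^{(\ell)}$ is well defined and coincides with the threshold in the statement.

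For part~(i), assume $-\infty\le u<\infty$. The bound $V(x)\ge E_x[e^{-q\tau_u}g(X_{\tau_u})\mathbf{1}_{\{\tau_u<\infty\}}]$ is automatic as $\tau_u\in\mathcal{M}$. If $x\ge u$ then $x\ge u^{(\ell)}$ for every $\ell$, so Corollary~\ref{c1}(i) gives $V^{(\ell)}(x)=g(x)$; passing to the limit, $V(x)=g(x)=E_x[e^{-q\tau_u}g(X_{\tau_u})\mathbf{1}_{\{\tau_u<\infty\}}]$ (since $\tau_u=0$). If $x<u$, one passes to the limit in $V^{(\ell)}(x)=E_x[e^{-q\tau^{(\ell)}(u^{(\ell)})}g(X_{\tau^{(\ell)}(u^{(\ell)})})\mathbf{1}_{\{\tau^{(\ell)}(u^{(\ell)})<\infty\}}]$: since $u^{(\ell)}\uparrow u$, one has $\tau^{(\ell)}(u^{(\ell)})\le\tau^{(\ell)}(u)\downarrow\tau_u$ (the last convergence by right-continuity of the paths), and a squeeze exploiting the vanishing gap $u-u^{(\ell)}$ yields $\tau^{(\ell)}(u^{(\ell)})\to\tau_u$ a.s.\ on $\{\tau_u<\infty\}$ (and $\to\infty$ on $\{\tau_u=\infty\}$). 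Right-continuity of $g$ together with a dominated convergence argument then identifies the limit with $E_x[e^{-q\tau_u}g(X_{\tau_u})\mathbf{1}_{\{\tau_u<\infty\}}]$.

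For part~(ii), with $u=\infty$ and hence $u^{(\ell)}\uparrow\infty$, the proof of Theorem~\ref{t2.1}(ii)---in particular the sandwich on $g(x+z)/g(z)$ used in (\ref{v7})---carries over with continuous hitting times $\tau_y$ in place of their discrete counterparts, yielding $V^{(\ell)}(x)\to\lim_{y\to\infty}E_x[e^{-q\tau_y}g(X_{\tau_y})\mathbf{1}_{\{\tau_y<\infty\}}]=e^{\beta x}W$. When $W=\infty$, the randomized-threshold construction from Theorem~\ref{t2.1}(ii) produces infinite expected reward verbatim. When $W<\infty$, the continuous-time analogue of (\ref{v8}) makes $\{e^{-qt}V(X_t)\}_{t\ge0}$ a positive martingale satisfying $V>g$ pointwise, so by optional sampling no stopping time is optimal. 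The principal obstacle will be the convergence $\tau^{(\ell)}(u^{(\ell)})\to\tau_u$ in part~(i): since $u^{(\ell)}<u$ in general, the dyadic stopping time may \emph{undershoot} $\tau_u$, and one must carefully balance the vanishing gap $u-u^{(\ell)}$, the mesh $2^{-\ell}$, and possible jump overshoots of $X$ at $\tau_u$ in order to secure both the a.s.\ convergence of the stopping times and sufficient integrability to interchange limit and expectation.
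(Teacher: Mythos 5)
Your overall scaffolding (apply Theorem~\ref{t2.1} to the dyadic skeletons, show $u^{(\ell)}\uparrow u$ and $V^{(\ell)}\uparrow V$, then pass to the limit) matches the paper, but the central step of part~(i) is handled quite differently, and in your version it has a real gap that you yourself flag but do not close.

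You propose to show directly that $\tau^{(\ell)}(u^{(\ell)})\to\tau_u$ a.s.\ and then take limits in $V^{(\ell)}(x)=E_x\bigl[e^{-q\tau^{(\ell)}(u^{(\ell)})}g(X_{\tau^{(\ell)}(u^{(\ell)})})\mathbf{1}_{\{\tau^{(\ell)}(u^{(\ell)})<\infty\}}\bigr]$. Because $u^{(\ell)}<u$, the stopping time $\tau^{(\ell)}(u^{(\ell)})$ can catch the sampled process during an excursion into $[u^{(\ell)},u)$ and thus fire strictly before $\tau_u$; whether the competing effects (rising threshold versus finer mesh) net out to the desired a.s.\ convergence and whether the integrand is uniformly integrable enough to pass to the limit are precisely the issues you list as ``the principal obstacle,'' and the proposal never resolves them. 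The paper avoids this undershoot problem altogether: Lemma~\ref{l3.4} shows that any stopping time dominating $\tau_u$ (in particular $\tau^{(\ell)}_u$, which always satisfies $\tau^{(\ell)}_u\ge\tau_u$) yields no more than $\tau_u$ does, and Lemma~\ref{l3.5} then bounds $V^{(\ell)}$ evaluated at the shifted point $x-(u-u^{(\ell)})$, rather than at $x$, by $E_x\bigl[e^{-q\tau_u}g(X_{\tau_u})\mathbf{1}_{\{\tau_u<\infty\}}\bigr]$, using translation invariance and monotonicity of $g$. The limit is then finished not by a convergence-of-stopping-times argument but by continuity of $V$, which is obtained in Lemma~\ref{l3.6} through the device of replacing $g$ by an everywhere-positive logconcave $\tilde g\ge g$ that agrees with $g$ on $[v,\infty)$ for some $v\in(x_0,u)$; one checks $\tilde V=V$ and then applies Lemma~\ref{l3.2}(iii). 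This replacement is also what makes the dyadic-approximation Lemma~\ref{l3.2}(i) applicable in the first place: your Fatou argument implicitly needs $g$ continuous at $X_\tau$, which fails if $g$ jumps at $x_0$ and $X_{\tau^{(\ell)}}$ approaches $X_\tau=x_0$ from below; Lemma~\ref{l3.2} is stated for $g>0$ for exactly this reason, and the residual case $u=x_0>-\infty$ is treated by a separate direct argument in the proof of Theorem~\ref{t3.1}(i).

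For part~(ii), the structure you describe is essentially the paper's, but one correction: in continuous time the paper only establishes that $\{e^{\beta X_t-qt}W\}$ is a positive \emph{supermartingale} (via $E(e^{\beta X_1-q})\le1$), not a martingale as in the discrete-time proof; the supermartingale inequality plus the strict inequality $V>g$ (established via the truncations $g_k=g(\cdot\wedge b_k)$) is what rules out any optimal stopping time. Also note that the monotonicity (\ref{b5}) of $y\mapsto E_x\bigl(e^{-q\tau_y}g(X_{\tau_y})\mathbf{1}_{\{\tau_y<\infty\}}\bigr)$, needed to identify the supremum $W$ with a limit, is obtained in the paper via Lemma~\ref{l3.7} applied to the truncations, not by simply transcribing the discrete-time argument.

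In short: the proposal is a plausible plan, but the convergence $\tau^{(\ell)}(u^{(\ell)})\to\tau_u$ that it hinges on is left as an unverified hope. You should either prove that convergence (which requires nontrivial work with Lévy path regularity and is, in fact, what the paper's authors appear to have decided to avoid) or adopt the paper's shift-and-continuity strategy built on Lemmas~\ref{l3.4}--\ref{l3.6}.
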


To prove Theorem \ref{t3.1}, we need the following lemmas where $g\ge0$ is assumed to be nonconstant, increasing, logconcave and right-continuous. Some arguments are needed to deal with the case that $g$ is not continuous at $x_0=\inf\{s:g(s)>0\}$, which is not covered in Chapter 3 of Shiryaev \cite{ref17} where $g$ is required to satisfy $P_x(\mathop{\underline{\lim}}\limits_{t\downarrow0}g(X_t)\ge g(x))=1$ for all $x\in\mathbb{R}$.
\begin{lem}\label{l3.1}
For $x\in\mathbb{R}$ and $\ell=1,2,\dots$, we have $V^{(\ell+1)}(x)\ge V^{(\ell)}(x)>0$ and $u^{(\ell+1)}\ge u^{(\ell)}\ge x_0$ where $x_0:=\inf\{s:g(s)>0\}$.
\end{lem}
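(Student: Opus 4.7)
The plan is to verify each of the four parts of the lemma by leveraging Theorem \ref{t2.1} and Corollary \ref{c1}, applied to the random walk $\{X_{n 2^{-\ell}}\}_{n\ge 0}$ at each dyadic scale.

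The inequality $V^{(\ell+1)}(x)\ge V^{(\ell)}(x)$ is immediate because $n 2^{-\ell}=(2n)2^{-(\ell+1)}$, so $\mathcal{M}^{(\ell)}\subset \mathcal{M}^{(\ell+1)}$ and taking supremum over a larger class cannot decrease the value. For strict positivity $V^{(\ell)}(x)>0$, I would pick $a\in\mathbb{R}$ with $g(a)>0$ (available since $g\ge 0$ is nonconstant) and consider the threshold-type candidate $\tau_a^{(\ell)}\in\mathcal{M}^{(\ell)}$. The hypothesis $P(X_1>0)>0$ forces $P(X_{2^{-\ell}}>0)>0$, for otherwise $X_1$ would be a sum of $2^\ell$ i.i.d.\ non-positive increments and hence non-positive a.s., contradicting $P(X_1>0)>0$. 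Consequently $P(X_{2^{-\ell}}>\varepsilon)>0$ for some $\varepsilon>0$, and by choosing $n_0$ large enough that $x+n_0\varepsilon\ge a$ one obtains $P_x(\tau_a^{(\ell)}\le n_02^{-\ell})>0$. On this event the discount factor is bounded below by $e^{-qn_02^{-\ell}}$ and the reward by $g(a)$, producing a strictly positive lower bound on $V^{(\ell)}(x)$.

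The bound $u^{(\ell)}\ge x_0$ follows directly from $(\ref{b1})$: any $x<x_0$ has $g(x)=0$, and under the convention $a/0:=\infty$ for $a\ge 0$ the defining ratio equals $\infty>1$, so such $x$ cannot belong to the set whose infimum defines $u^{(\ell)}$ (vacuous when $x_0=-\infty$). The remaining assertion $u^{(\ell+1)}\ge u^{(\ell)}$ is the most substantive one, and I would handle it by applying Corollary \ref{c1}(i) to the random-walk optimal stopping problems at both scales $\ell$ and $\ell+1$. That corollary identifies the continuation region as $\{x:V^{(\ell)}(x)>g(x)\}=(-\infty,u^{(\ell)})$, with the obvious conventions when $u^{(\ell)}=\pm\infty$. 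Since $V^{(\ell+1)}(x)\ge V^{(\ell)}(x)$ from the first step, any $x$ satisfying $V^{(\ell)}(x)>g(x)$ also satisfies $V^{(\ell+1)}(x)>g(x)$; hence $(-\infty,u^{(\ell)})\subset(-\infty,u^{(\ell+1)})$ and the desired inequality follows.

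I do not anticipate a serious obstacle here, because the whole argument reduces to results already established in Section~2. The only minor check required is that the increment $X_{2^{-\ell}}$ satisfies the strict-positivity hypothesis of Theorem \ref{t2.1}, which is the short i.i.d.\ sum argument used above. The substance of the lemma is the monotonicity in $\ell$, which is essentially a shadow of the monotonicity of the value functions once the optimal continuation region is read off from Corollary \ref{c1}.
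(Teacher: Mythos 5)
Your proof is correct and follows essentially the same route as the paper's: the inequality $V^{(\ell+1)}\ge V^{(\ell)}$ comes from $\mathcal{M}^{(\ell)}\subset\mathcal{M}^{(\ell+1)}$, and the threshold monotonicity $u^{(\ell+1)}\ge u^{(\ell)}$ is deduced from Corollary \ref{c1}(i) by comparing the continuation/stopping regions at the two scales (the paper phrases it via the stopping region of the $(\ell+1)$-th problem being contained in that of the $\ell$-th, which is logically the same as your inclusion of continuation regions). Your added details — verifying $P(X_{2^{-\ell}}>0)>0$ so that Theorem \ref{t2.1} and Corollary \ref{c1} actually apply to the dyadic random walk, and reading off $u^{(\ell)}\ge x_0$ directly from the $a/0:=\infty$ convention in (\ref{b1}) — are points the paper leaves implicit, and they are correct.
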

\begin{proof}
Since $\mathcal{M}^{(\ell)}\subset\mathcal{M}^{(\ell+1)}$, we have
\begin{align*}
0<V^{(\ell)}(x)&=\sup_{\tau\in\mathcal{M}^{(\ell)}}E_x\left(e^{-q\tau}g(X_\tau)\mathbf{1}_{\{\tau<\infty\}}\right)\\
&\le\sup_{\tau\in\mathcal{M}^{(\ell+1)}}E_x\left(e^{-q\tau}g(X_\tau)\mathbf{1}_{\{\tau<\infty\}}\right)=V^{(\ell+1)}(x).
\end{align*}
To show $u^{(\ell+1)}\ge u^{(\ell)}$, it suffices to consider the case $u^{(\ell+1)}<\infty$ (possibly $u^{(\ell+1)}=-\infty$). We have by Corollary \ref{c1}(i)
$$
g(x)=V^{(\ell+1)}(x)\ge V^{(\ell)}(x)\ge g(x)\;\;\mbox{for}\;\;x\ge u^{(\ell+1)},
$$
implying that $V^{(\ell)}(x)=g(x)$ for $x\ge u^{(\ell+1)}$. By Corollary \ref{c1}(i) again, we have $u^{(\ell+1)}\ge u^{(\ell)}$, completing the proof.
\end{proof}

\begin{lem}\label{l3.2}
Suppose $g(x)>0$ for all $x\in\mathbb{R}$. Then
\begin{itemize}
\item[\upshape(i)] $V^{(\infty)}(x):=\lim_{\ell\to\infty}V^{(\ell)}(x)=V(x)$ for $x\in\mathbb{R}$.
\item[\upshape(ii)] $V(x)>g(x)$ for $x<u$ and $V(x)=g(x)$ for $x\ge u$.
\item[\upshape(iii)] $V(x)/g(x)$ is decreasing in $x$.
\end{itemize}
\end{lem}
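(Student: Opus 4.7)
The plan is to transfer Corollary \ref{c1} from the discrete-time random walk setting to the continuous-time L\'evy process setting via dyadic approximation of stopping times. The key enabling fact is that under the standing hypothesis $g(x)>0$ for all $x\in\mathbb{R}$ we have $x_0=-\infty$, so by Remark 2.1 the function $g$ is continuous on all of $\mathbb{R}$.

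For part (i), the inequality $V^{(\infty)}(x)\le V(x)$ is immediate from $\mathcal{M}^{(\ell)}\subset\mathcal{M}$. For the reverse, given an arbitrary $\tau\in\mathcal{M}$, I would introduce the dyadic approximant $\tau^{(\ell)}:=2^{-\ell}\lceil 2^\ell\tau\rceil$ on $\{\tau<\infty\}$ and $\tau^{(\ell)}:=\infty$ on $\{\tau=\infty\}$. Since $\{\tau^{(\ell)}\le n2^{-\ell}\}=\{\tau\le n2^{-\ell}\}\in\mathcal{F}_{n2^{-\ell}}$, we have $\tau^{(\ell)}\in\mathcal{M}^{(\ell)}$, and on $\{\tau<\infty\}$ it holds that $\tau^{(\ell)}\downarrow\tau$. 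The right-continuity of the L\'evy paths together with the continuity of $g$ then gives
$e^{-q\tau^{(\ell)}}g(X_{\tau^{(\ell)}})\mathbf{1}_{\{\tau^{(\ell)}<\infty\}}\to e^{-q\tau}g(X_{\tau})\mathbf{1}_{\{\tau<\infty\}}$ almost surely, and Fatou's lemma yields
\[
V^{(\infty)}(x)\ge\liminf_{\ell\to\infty}E_x\left(e^{-q\tau^{(\ell)}}g(X_{\tau^{(\ell)}})\mathbf{1}_{\{\tau^{(\ell)}<\infty\}}\right)\ge E_x\left(e^{-q\tau}g(X_{\tau})\mathbf{1}_{\{\tau<\infty\}}\right).
\]
Taking the supremum over $\tau\in\mathcal{M}$ then gives $V^{(\infty)}(x)\ge V(x)$.

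Parts (ii) and (iii) then follow by pushing the corresponding discrete-time statements through the limit $\ell\to\infty$. Since $u^{(\ell)}\uparrow u$, for $x\ge u$ we have $x\ge u^{(\ell)}$ for every $\ell$ and Corollary \ref{c1}(i) gives $V^{(\ell)}(x)=g(x)$; combined with part (i) this yields $V(x)=g(x)$. For $x<u$ one has $u^{(\ell)}>x$ for all sufficiently large $\ell$, so Corollary \ref{c1}(i) yields $V^{(\ell)}(x)>g(x)$ and hence $V(x)\ge V^{(\ell)}(x)>g(x)$. For (iii), Corollary \ref{c1}(ii) says $V^{(\ell)}(x)/g(x)$ is decreasing in $x$ for each $\ell$, and passing to the limit in $V^{(\ell)}(x_1)/g(x_1)\ge V^{(\ell)}(x_2)/g(x_2)$ for $x_1<x_2$ preserves the inequality.

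The main subtlety lies in the Fatou step in part (i): it depends crucially on the almost-sure convergence $g(X_{\tau^{(\ell)}})\to g(X_{\tau})$ on $\{\tau<\infty\}$, which in turn requires the continuity of $g$. This is precisely the role of the positivity hypothesis $g>0$ on $\mathbb{R}$, which rules out the possible discontinuity of $g$ at $x_0$ identified in Remark 2.1. If $g$ had a jump at $x_0$ and the approximating values $X_{\tau^{(\ell)}}$ could straddle $x_0$ (approaching from below), the naive Fatou argument would fail and additional work, dealing separately with paths that accumulate on either side of the jump, would be required in subsequent lemmas of the paper.
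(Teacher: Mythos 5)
Your proposal is correct and follows essentially the same route as the paper: dyadic discretization $\tau^{(\ell)}\downarrow\tau$, almost-sure convergence of $e^{-q\tau^{(\ell)}}g(X_{\tau^{(\ell)}})$ via right-continuity of paths and continuity of $g$ (which is where $g>0$ enters), Fatou's lemma to close part (i), then pushing Corollary 2.1(i)--(ii) through the monotone limit $V^{(\ell)}\uparrow V$ and $u^{(\ell)}\uparrow u$ for parts (ii)--(iii). The only cosmetic difference is that you take $\tau^{(\ell)}=2^{-\ell}\lceil 2^\ell\tau\rceil$ while the paper takes $2^{-\ell}(\lfloor 2^\ell\tau\rfloor+1)$; both are dyadic-valued stopping times decreasing to $\tau$, so this changes nothing.
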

\begin{proof}
Since $g$ is assumed to be logconcave, $g(x)>0$ for all $x\in\mathbb{R}$ implies that $g$ is a continuous function.

(i) Since $V(x)\ge V^{(\ell)}(x)$, we have $V(x)\ge V^{(\infty)}(x)$. To show $V(x)\le V^{(\infty)}(x)$, let $\tau\in\mathcal{M}$ be any stopping time. Define for $\ell=1,2,\dots$,
$$
\tau^{(\ell)}=
\begin{cases}
2^{-\ell}(\lfloor2^{\ell}\tau\rfloor+1),&\;\;\mbox{if}\;\;\tau<\infty;\\
\infty,&\;\;\mbox{otherwise},
\end{cases}
$$
where $\lfloor x\rfloor$ denotes the largest integer not exceeding $x$. Clearly, $\tau^{(\ell)}\in\mathcal{M}^{(\ell)}$ and $\tau^{(\ell)}\searrow\tau$ as $\ell\to\infty$. It follows from the right-continuity of $\{X_t\}$ and continuity of $g$ that
$$
e^{-q\tau^{(\ell)}}g(X_{\tau^{(\ell)}})\to e^{-q\tau}g(X_\tau)\;\;\mbox{a.s. on}\;\;\{\tau<\infty\}.
$$
We have by Fatou's lemma that
\begin{align*}
V^{(\infty)}(x)&=\lim_{\ell\to\infty}V^{(\ell)}(x)\\
&=\sup_{\ell}V^{(\ell)}(x)\\
&\ge \sup_{\ell}E_x\left(e^{-q\tau^{(\ell)}}g(X_{\tau^{(\ell)}})\mathbf{1}_{\{\tau^{(\ell)}<\infty\}}\right)\\
&\ge\mathop{\underline{\lim}}_{\ell\to\infty}E_x\left(e^{-q\tau^{(\ell)}}g(X_{\tau^{(\ell)}})\mathbf{1}_{\{\tau^{(\ell)}<\infty\}}\right)\\
&\ge E_x\left(e^{-q\tau}g(X_\tau)\mathbf{1}_{\{\tau<\infty\}}\right).
\end{align*}
Since $\tau\in\mathcal{M}$ is arbitrary, we have $V(x)\le V^{(\infty)}(x)$.

(ii) For $x<u$, choose a (large) $\ell$ with $x<u^{(\ell)}\le u$, so that by Corollary \ref{c1}(i)
$$
g(x)<V^{(\ell)}(x)\le V^{(\infty)}(x)=V(x).
$$
For $x\ge u$, since $u^{(\ell)}\le u\le x$, we have by Corollary \ref{c1}(i)
$$
g(x)=V^{(\ell)}(x)\;\;\mbox{for all}\;\;\ell,
$$
implying that $g(x)=V^{(\infty)}(x)=V(x)$.

(iii) By Corollary \ref{c1}(ii), $V^{(\ell)}(x)/g(x)$ is decreasing in $x$. Since $V^{(\ell)}(x)\nearrow V(x)$ as $\ell\to\infty$, it follows that $V(x)/g(x)$ is decreasing in $x$.
\end{proof}

\begin{lem}\label{l3.3}
If $-\infty<u<\infty$, then $V(x)=g(x)$ for $x\ge u$.
\end{lem}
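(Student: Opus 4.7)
The trivial $\tau = 0$ bound gives $V(x) \ge g(x)$, so the task is $V(x) \le g(x)$ for $x \ge u$. The strategy is to bootstrap Lemma \ref{l3.2}(ii), which assumes $g > 0$ (hence continuous) on $\mathbb{R}$, to the general case by approximating $g$ from above by continuous, strictly positive, increasing, logconcave functions $g_n$ that coincide with $g$ on $[x_0 + 1/n, \infty)$.

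For each large $n$, let $\alpha_n := h'((x_0 + 1/n)-) \in [0, \infty)$ and set
$$g_n(x) := \begin{cases} g(x), & x \ge x_0 + 1/n,\\ g(x_0 + 1/n)\exp\bigl(-\alpha_n(x_0 + 1/n - x)\bigr), & x \le x_0 + 1/n.\end{cases}$$
The concavity of $h$ and the choice of $\alpha_n$ ensure that $g_n$ is continuous, strictly positive, nonconstant, increasing and logconcave, that $g_n \ge g$ pointwise, and that $g_n(x) \to g(x)$ for every $x \in [x_0, \infty)$; the convergence at $x = x_0$ uses $\alpha_n/n \le h(x_0 + 1/n) - h(x_0) \to 0$, itself a consequence of the right-continuity of $h$ at $x_0$. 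Let $V_n$ and $u_n$ denote the value function and threshold of the problem with reward $g_n$. Since $g_n \ge g$ we have $V \le V_n$, and Lemma \ref{l3.2}(ii) gives $V_n(y) = g_n(y)$ for all $y \ge u_n$.

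The key technical step is the threshold comparison $u_n \le \max\{u,\, x_0 + 1/n\}$. Working at the dyadic scale $2^{-\ell}$, for every $y \ge x_0 + 1/n$ the event $\{T_y^{(\ell)} < \infty\}$ forces $X_{T_y^{(\ell)}} \ge y \ge x_0 + 1/n$, on which $g_n \equiv g$; consequently the ratios $E_y[e^{-qT_y^{(\ell)}} g_n(X_{T_y^{(\ell)}})\mathbf{1}_{\{T_y^{(\ell)}<\infty\}}]/g_n(y)$ and $E_y[e^{-qT_y^{(\ell)}} g(X_{T_y^{(\ell)}})\mathbf{1}_{\{T_y^{(\ell)}<\infty\}}]/g(y)$ coincide on $[x_0 + 1/n, \infty)$. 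Combined with the continuity and monotonicity of the latter ratio on $(x_0, \infty)$ (see Remark 2.2), this yields $u_n^{(\ell)} \le \max\{u^{(\ell)},\, x_0 + 1/n\}$, and letting $\ell \to \infty$ gives the claim.

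For any $x > u$ we have $x > x_0$ and $\max\{u, x_0 + 1/n\} \to u < x$, so for $n$ sufficiently large $u_n < x$ and $x \ge x_0 + 1/n$; hence
$$V(x) \;\le\; V_n(x) \;=\; g_n(x) \;=\; g(x).$$
The boundary point $x = u$ is handled by the monotonicity of $V$ (inherited from that of $g$ via the identity $V(x) = \sup_\tau E[e^{-q\tau}g(x + Z_\tau)\mathbf{1}_{\{\tau < \infty\}}]$, with $Z$ the Lévy process started at $0$) together with the right-continuity of $g$ at $u$, which give $g(u) \le V(u) \le \lim_{x \downarrow u} V(x) = \lim_{x \downarrow u} g(x) = g(u)$. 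The principal obstacle is precisely the potential discontinuity of $g$ at $x_0$, which blocks a direct Fatou argument of the type used in the proof of Lemma \ref{l3.2}(i); the approximant $g_n$ is tailored so that $g_n \equiv g$ on $[x_0 + 1/n, \infty)$, forcing $u_n \to u$ and $V_n(x) = g(x)$ for every $x > u$ and every $n$ sufficiently large.
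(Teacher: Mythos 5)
Your proposal follows essentially the same strategy as the paper's proof: approximate $g$ from above by a continuous, strictly positive logconcave majorant obtained by replacing $\log g$ with a supporting line on a left half-line, and then invoke Lemma~\ref{l3.2}(ii). The only cosmetic difference is that the paper performs the extension at the specific point $x>u$ under consideration (so that $\tilde g(x)=g(x)$ immediately gives $V(x)\le\tilde V(x)=\tilde g(x)=g(x)$ with no limiting step), whereas you extend at $x_0+1/n$ and let $n\to\infty$; the key lemma invoked and the mechanism (threshold comparison for the majorant, $g_n\ge g$ forces $V\le V_n$) are identical.

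There is one genuine, if small, gap. You assert that $g_n$ is nonconstant, but this fails when $\alpha_n = h'((x_0+1/n)-)=0$: in that case $h$ is constant on $[x_0+1/n,\infty)$ and your $g_n$ is constant on all of $\mathbb{R}$, so Lemma~\ref{l3.2} is not applicable to it. If $\alpha_m>0$ for some $m$ your ``$n$ sufficiently large'' clause silently recovers, but if $\alpha_n=0$ for \emph{every} $n$ --- equivalently, if $g$ is constant on $(x_0,\infty)$, i.e. $g=c\,\mathbf{1}_{[x_0,\infty)}$ with $x_0>-\infty$ --- the construction breaks down for every $n$. The paper avoids this by treating the degenerate case $h'(x-)=0$ separately, observing directly that $g(x)=\sup g\ge V(x)$. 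You need an analogous carve-out: if $g=c\,\mathbf{1}_{[x_0,\infty)}$ then $u=x_0$ and trivially $V(x)\le\sup g=c=g(x)$ for $x\ge x_0$. With that one case disposed of first, the rest of your argument is correct.
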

\begin{proof}
Let $h(x)=\log g(x)$ for $x\in\mathbb{R}$. Fix an $x>u\;(\ge x_0:=\inf\{s:g(s)>0\})$. If $h'(x-)=0$, then $h'(y-)=0$ for $y>x$, so that $g(x)=\sup\{g(y):y\in\mathbb{R}\}\ge V(x)$, implying that $V(x)=g(x)$. Suppose $h'(x-)>0$. Let
$$
\tilde{h}(y):=
\begin{cases}
h(x)+h'(x-)(y-x),&\;\;\mbox{if}\;\;y<x;\\
h(y),&\;\;\mbox{otherwise}.
\end{cases}
$$
Let $\tilde{g}(y)=e^{\tilde{h}(y)}>0$ for all $y\in\mathbb{R}$, which is larger than or equal to $g(y)$, nonconstant, increasing, logconcave and continuous. (Note that $\tilde{g}(y)$ is nonconstant since $h'(x-)>0$.) Define
$$
\tilde{u}^{(\ell)}:=\inf\left\{y\in\mathbb{R}:\frac{E_y\left(e^{-qT^{(\ell)}_{y}}\tilde{g}(X_{T^{(\ell)}_y})\mathbf{1}_{\{T^{(\ell)}_y<\infty\}}\right)}{\tilde{g}(y)}\le1\right\}.
$$
Since $\tilde{g}(y)=g(y)$ for all $y\ge x$ and $u^{(\ell)}\le u<x$, we have $\tilde{u}^{(\ell)}\le x$. Let $\tilde{u}:=\lim_{\ell\to\infty}\tilde{u}^{(\ell)}\le x$. By Lemma \ref{l3.2}(ii) applied to $\tilde{g}$,
$$
g(x)=\tilde{g}(x)=\tilde{V}(x):=\sup_{\tau\in\mathcal{M}}E_x\left(e^{-q\tau}\tilde{g}(X_\tau)\mathbf{1}_{\{\tau<\infty\}}\right)\ge V(x),
$$
where the inequality follows from $\tilde{g}(y)\ge g(y)$ for all $y\in\mathbb{R}$. So $V(x)=g(x)$. We have shown $V(x)=g(x)$ for all $x>u$. Furthermore,
$$
g(u)\le V(u)\le\lim_{x\downarrow u}V(x)=\lim_{x\downarrow u}g(x)=g(u),
$$
implying that $V(u)=g(u)$, completing the proof.
\end{proof}

\begin{lem}\label{l3.4}
Suppose $-\infty<u<\infty$. For $x<u$ and $\tau\in\mathcal{M}$ with $P_x(\tau\ge\tau_u)=1$, we have
$$
E_x\left(e^{-q\tau}g(X_\tau)\mathbf{1}_{\{\tau<\infty\}}\right)\le E_x\left(e^{-q\tau_{u}}g(X_{\tau_{u}})\mathbf{1}_{\{\tau_{u}<\infty\}}\right).
$$
\end{lem}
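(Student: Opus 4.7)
The plan is to reduce the inequality to an application of the strong Markov property of $X$ at the stopping time $\tau_u$, together with the identification $V=g$ on $[u,\infty)$ already supplied by Lemma \ref{l3.3}. Intuitively, once the process has crossed the threshold $u$, the best thing one can do is to stop immediately, because the value function has already collapsed to $g$ there; so any policy forced to wait until $\tau_u$ cannot beat $\tau_u$ itself.

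More concretely, since $\tau\geq\tau_u$ $P_x$-a.s., we have $\{\tau<\infty\}\subset\{\tau_u<\infty\}$. On $\{\tau_u<\infty\}$ define the shifted time $\sigma:=\tau-\tau_u$, which is a stopping time with respect to the time-shifted filtration $\mathcal{G}_t:=\mathcal{F}_{\tau_u+t}$. By the strong Markov property of the L\'evy process $X$ at $\tau_u$, the shifted process $\widetilde X_t:=X_{\tau_u+t}$ is, conditionally on $\mathcal{F}_{\tau_u}$, a L\'evy process starting from $X_{\tau_u}$ and independent of $\mathcal{F}_{\tau_u}$. Writing $e^{-q\tau}=e^{-q\tau_u}e^{-q\sigma}$ and $X_\tau=\widetilde X_\sigma$ on $\{\tau_u<\infty\}$ and conditioning on $\mathcal{F}_{\tau_u}$, one obtains
\begin{align*}
E_x\!\left(e^{-q\tau}g(X_\tau)\mathbf{1}_{\{\tau<\infty\}}\right)
&=E_x\!\left(e^{-q\tau_u}\mathbf{1}_{\{\tau_u<\infty\}}\,E_x\!\left[e^{-q\sigma}g(\widetilde X_\sigma)\mathbf{1}_{\{\sigma<\infty\}}\mid\mathcal{F}_{\tau_u}\right]\right)\\
&\le E_x\!\left(e^{-q\tau_u}V(X_{\tau_u})\mathbf{1}_{\{\tau_u<\infty\}}\right),
\end{align*}
where the inner conditional expectation is bounded by $V(X_{\tau_u})$ because, given $\mathcal{F}_{\tau_u}$, it is the expected payoff of the stopping rule $\sigma$ for a L\'evy process started at $X_{\tau_u}$, hence at most the value function at $X_{\tau_u}$.

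To finish, note that by right-continuity of $X$ we have $X_{\tau_u}\ge u$ on $\{\tau_u<\infty\}$, so Lemma \ref{l3.3} yields $V(X_{\tau_u})=g(X_{\tau_u})$ on this event. Substituting this equality into the previous display gives exactly
$$
E_x\!\left(e^{-q\tau}g(X_\tau)\mathbf{1}_{\{\tau<\infty\}}\right)\le E_x\!\left(e^{-q\tau_u}g(X_{\tau_u})\mathbf{1}_{\{\tau_u<\infty\}}\right).
$$

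The main technical point I expect to need care with is the rigorous passage from ``conditional expectation with respect to $\mathcal{F}_{\tau_u}$'' to ``supremum over $\mathcal{M}$ starting at $X_{\tau_u}$'', i.e.\ justifying that $\sigma$, although a priori measurable only with respect to $\mathcal{G}_\cdot=\mathcal{F}_{\tau_u+\cdot}$ and not the natural filtration of $\widetilde X$, still yields a value bounded by $V(X_{\tau_u})$. This is the standard consequence of the strong Markov property (the pre-$\tau_u$ randomization is independent of $\widetilde X$, so it cannot raise the expected payoff above the unconditional supremum $V$), but the bookkeeping must be spelled out carefully. Everything else is a routine application of Lemma \ref{l3.3} and the hitting-time property $X_{\tau_u}\ge u$.
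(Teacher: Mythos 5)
Your proof is correct and follows essentially the same route as the paper's: apply the strong Markov property at $\tau_u$, bound the inner conditional expectation by $V(X_{\tau_u})$, and then use Lemma \ref{l3.3} to replace $V(X_{\tau_u})$ by $g(X_{\tau_u})$ on $\{\tau_u<\infty\}$. Your concluding remark correctly identifies the only point requiring care (that the post-$\tau_u$ stopping rule is dominated by the unconditional supremum $V$), and the paper handles it with the same implicit appeal to the strong Markov property.
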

\begin{proof}
Since on $\{\tau_u<\infty\}$, $X_{\tau_u+s}-X_{\tau_u}$ ($s\ge0$) is independent of $\mathcal{F}_{\tau_u}$ and has the same law as $X_s-X_0$, we have
\begin{align*}
E_x\left[e^{-q(\tau-\tau_u)}g(X_{\tau_u}+(X_\tau-X_{\tau_u}))\mathbf{1}_{\{\tau-\tau_u<\infty\}}\mid\mathcal{F}_{\tau_u}\right]&\le V(X_{\tau_u})=g(X_{\tau_u})\;\;\mbox{a.s.},
\end{align*}
where the equality follows from Lemma \ref{l3.3} (noting that $X_{\tau_u}\ge u$ on $\{\tau_u<\infty\}$). So,
\begin{align*}
E_x\left(e^{-q\tau}g(X_\tau)\mathbf{1}_{\{\tau<\infty\}}\right)&=E_x\left\{e^{-q\tau_u}\mathbf{1}_{\{\tau_u<\infty\}}E_x\left[e^{-q(\tau-\tau_u)}g(X_\tau)\mathbf{1}_{\{\tau-\tau_u<\infty\}}\mid\mathcal{F}_{\tau_u}\right]\right\}\\
&\le E_x\left(e^{-q\tau_u}g(X_{\tau_u})\mathbf{1}_{\{\tau_u<\infty\}}\right),
\end{align*}
completing the proof.
\end{proof}

\begin{lem}\label{l3.5}
Suppose $-\infty<u<\infty$. For $x<u$,
$$
V^{(\ell)}(x-u+u^{(\ell)})\le E_x\left(e^{-q\tau_u}g(X_{\tau_u})\mathbf{1}_{\{\tau_u<\infty\}}\right).
$$
\end{lem}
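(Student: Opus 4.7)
\textbf{Proof plan for Lemma \ref{l3.5}.} The plan is to compare the discrete-time $\ell$-problem to the continuous-time problem through the shift that aligns the optimal thresholds, and then to use the supermartingale/optimality estimate already recorded as Lemma \ref{l3.4}. Since $u^{(\ell)}\nearrow u>-\infty$, we may (and do) restrict attention to $\ell$ large enough so that $u^{(\ell)}>-\infty$; for such $\ell$, set $\delta:=u-u^{(\ell)}\ge 0$ and $y:=x-\delta=x-u+u^{(\ell)}$. Since $x<u$, we have $y<u^{(\ell)}$, so $y$ lies in the continuation region of the discrete-$\ell$ problem.

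First I would translate. By the stationary-increments property of the L\'evy process, the law of $\bigl(\tau^{(\ell)}(u^{(\ell)}),\,X_{\tau^{(\ell)}(u^{(\ell)})}\mathbf{1}_{\{\tau^{(\ell)}(u^{(\ell)})<\infty\}}\bigr)$ under $P_{y}$ equals that of $\bigl(\tau^{(\ell)}(u),\,(X_{\tau^{(\ell)}(u)}-\delta)\mathbf{1}_{\{\tau^{(\ell)}(u)<\infty\}}\bigr)$ under $P_{x}$. Combined with the optimality of $\tau^{(\ell)}(u^{(\ell)})$ in the discrete-$\ell$ problem (identity (\ref{e3.3})), this yields
\begin{equation*}
V^{(\ell)}(y)=E_{x}\!\left(e^{-q\tau^{(\ell)}(u)}\,g(X_{\tau^{(\ell)}(u)}-\delta)\,\mathbf{1}_{\{\tau^{(\ell)}(u)<\infty\}}\right).
\end{equation*}
Because $g$ is increasing and $\delta\ge 0$, dropping the $-\delta$ gives
\begin{equation*}
V^{(\ell)}(y)\le E_{x}\!\left(e^{-q\tau^{(\ell)}(u)}\,g(X_{\tau^{(\ell)}(u)})\,\mathbf{1}_{\{\tau^{(\ell)}(u)<\infty\}}\right).
\end{equation*}

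Next, I would compare $\tau^{(\ell)}(u)$ with the continuous-time hitting time $\tau_u$. Since $\tau^{(\ell)}(u)$ is the infimum over the dyadic grid of times at which the L\'evy process lies in $[u,\infty)$, and every such dyadic time is also a continuous time at which $X_t\ge u$, we have $\tau_u\le\tau^{(\ell)}(u)$ pointwise. Moreover $\tau^{(\ell)}(u)\in\mathcal{M}$. With $x<u$, Lemma \ref{l3.4} applies and yields
\begin{equation*}
E_{x}\!\left(e^{-q\tau^{(\ell)}(u)}\,g(X_{\tau^{(\ell)}(u)})\,\mathbf{1}_{\{\tau^{(\ell)}(u)<\infty\}}\right)\le E_{x}\!\left(e^{-q\tau_{u}}\,g(X_{\tau_{u}})\,\mathbf{1}_{\{\tau_{u}<\infty\}}\right).
\end{equation*}
Chaining the two displays proves the lemma. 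The argument is largely bookkeeping: the only points requiring care are (i) the fact that the initial state shift, the threshold shift, and the increment in $X$ all combine so that $g$ is evaluated at a point below $X_{\tau^{(\ell)}(u)}$ (so monotonicity goes in the right direction), and (ii) checking the hypothesis $\tau\ge\tau_u$ of Lemma \ref{l3.4}, which is immediate from the grid comparison. Neither step poses a serious obstacle; the whole proof reduces to a translation argument combined with the strong Markov estimate already in hand.
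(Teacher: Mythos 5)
Your proposal is correct and follows essentially the same route as the paper: write $\delta=u-u^{(\ell)}$, use the spatial-shift identity to re-express $V^{(\ell)}(x-\delta)$ as an expectation at initial state $x$ with a shifted argument in $g$, drop the shift by monotonicity, and close with Lemma \ref{l3.4} via the pointwise inequality $\tau_u\le\tau^{(\ell)}(u)$. Your added remark about restricting to $\ell$ large enough that $u^{(\ell)}>-\infty$ is a minor but reasonable clarification the paper leaves implicit.
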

\begin{proof}
For ease of notation, write $v=u^{(\ell)}$ and $\delta=u-u^{(\ell)}=u-v\ge0$. Noting that for $x<u$,
$$
\mathcal{L}(\tau^{(\ell)}_u,X_{\tau^{(\ell)}_u}\mathbf{1}_{\{\tau^{(\ell)}_u<\infty\}}\mid X_0=x)=\mathcal{L}(\tau^{(\ell)}_v,(\delta+X_{\tau^{(\ell)}_v})\mathbf{1}_{\{\tau^{(\ell)}_v<\infty\}}\mid X_0=x-\delta),
$$
we have
\begin{align*}
V^{(\ell)}(x-\delta)&=E_{x-\delta}\left(e^{-q\tau^{(\ell)}_v}g(X_{\tau^{(\ell)}_v})\mathbf{1}_{\{\tau^{(\ell)}_v<\infty\}}\right)\\
&=E_{x}\left(e^{-q\tau^{(\ell)}_u}g(X_{\tau^{(\ell)}_u}-\delta)\mathbf{1}_{\{\tau^{(\ell)}_u<\infty\}}\right)\\
&\le E_{x}\left(e^{-q\tau^{(\ell)}_u}g(X_{\tau^{(\ell)}_u})\mathbf{1}_{\{\tau^{(\ell)}_u<\infty\}}\right)\\
&\le
E_{x}\left(e^{-q\tau_u}g(X_{\tau_u})\mathbf{1}_{\{\tau_u<\infty\}}\right),
\end{align*}
where the last inequality follows from Lemma \ref{l3.4} and the fact that $P_x(\tau^{(\ell)}_u\ge\tau_u)=1$.
\end{proof}

\begin{lem}\label{l3.6}
Suppose $x_0<u<\infty$. Then $V(x)$ is continuous everywhere and $\tau_u$ is optimal.
\end{lem}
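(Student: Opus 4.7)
The plan is to verify optimality of $\tau_u$ and continuity of $V$ region by region. For $x \ge u$, Lemma \ref{l3.3} yields $V(x) = g(x)$, and since $X_0 = x \ge u$ one has $\tau_u = 0$ $P_x$-a.s., so $E_x(e^{-q\tau_u}g(X_{\tau_u})\mathbf{1}_{\{\tau_u<\infty\}}) = g(x) = V(x)$; continuity of $V$ on $[u, \infty)$ reduces to continuity of $g$ there, which holds because $u > x_0$ and $g$ is continuous on $(x_0, \infty)$.

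For $x < u$, set $\phi(x) := E_x(e^{-q\tau_u}g(X_{\tau_u})\mathbf{1}_{\{\tau_u<\infty\}})$; the inequality $V(x) \ge \phi(x)$ is immediate from $\tau_u \in \mathcal{M}$. To obtain $V(x) \le \phi(x)$, I pass the inequality of Lemma \ref{l3.5}, namely $V^{(\ell)}(x - u + u^{(\ell)}) \le \phi(x)$, to the limit $\ell \to \infty$. Since $u^{(\ell)} \nearrow u > x_0$ by Lemma \ref{l3.1}, for every sufficiently large $\ell_0$ we have $u^{(\ell_0)} > x_0$, so Corollary \ref{c1}(iii) makes $V^{(\ell_0)}$ continuous on $\mathbb{R}$; combining this with the monotonicity $V^{(\ell)} \ge V^{(\ell_0)}$ for $\ell \ge \ell_0$ and with $x - u + u^{(\ell)} \nearrow x$, a diagonal argument yields $V^{(\infty)}(x) := \lim_\ell V^{(\ell)}(x) \le \phi(x)$.

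It remains to show $V^{(\infty)} = V$. The bound $V^{(\infty)} \le V$ is trivial; for $V \le V^{(\infty)}$ I approximate any $\tau \in \mathcal{M}$ by the dyadic times $\tau^{(\ell)} := 2^{-\ell}(\lfloor 2^\ell \tau \rfloor + 1)$ as in the proof of Lemma \ref{l3.2}(i). Right-continuity of $X$ gives $X_{\tau^{(\ell)}} \to X_\tau$ a.s., and Fatou combined with the right-continuity of $g$ yields $V^{(\infty)}(x) \ge E_x(e^{-q\tau}g(X_\tau)\mathbf{1}_{\{\tau<\infty\}})$ whenever $g$ is continuous at $X_\tau$ almost surely, which occurs in particular when $x_0 = -\infty$ or $g(x_0) = 0$. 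Once $V = \phi$ is established, continuity of $V$ on $\mathbb{R}$ follows from the lower semicontinuity supplied by $V^{(\ell)} \nearrow V$ with each $V^{(\ell)}$ continuous, together with the property $V(y)/g(y)$ decreasing (passed to the limit from Corollary \ref{c1}(ii)) and the continuity of $g$ on $(x_0, \infty)$.

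The main obstacle is the residual case $x_0 > -\infty$ with $g(x_0) > 0$, where $g$ jumps at $x_0$ and the Fatou step loses the contribution from $\{X_\tau = x_0\}$ since $\liminf_\ell g(X_{\tau^{(\ell)}}) \ge g(X_\tau -) = 0$ there. To circumvent it I would employ the tangent extension of Lemma \ref{l3.3} at $u$, namely $\tilde{h}(y) = h(u) + h'(u-)(y - u)$ for $y \le u$ and $\tilde{h}(y) = h(y)$ for $y > u$, giving $\tilde{g} := e^{\tilde h}$ continuous, strictly positive, logconcave, increasing, nonconstant, dominating $g$, and agreeing with $g$ on $[u, \infty)$. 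Applying Lemma \ref{l3.2} to the $\tilde g$-problem furnishes a continuous $\tilde V \ge V$, and by carefully matching $\tilde V$ and $\phi$ using strong Markov at $\sigma := \inf\{t : X_t \ge x_0\}$ together with the agreement $\tilde g = g$ on $[u, \infty)$ (where all optimal paths collect their reward), one can sandwich $V(x)$ between $\phi(x)$ and $\tilde V(x)$ to close the argument at $x = x_0$ and at $x < x_0$. The degenerate subcase $h'(u-) = 0$ forces $g$ constant on $[u, \infty)$ and is handled via the L\'evy analogue of Lemma \ref{le}.
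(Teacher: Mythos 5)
The broad strategy is right—dyadic approximation plus Lemma~\ref{l3.5} plus a tangent extension to deal with the possible jump of $g$ at $x_0$—but the placement of the tangent is wrong in a way that breaks the argument precisely in the case you flag as the ``main obstacle.'' The paper tangents at an interior point $v\in(x_0,u)$ strictly below $u$, not at $u$. This choice matters for the following reason. With $\tilde g=g$ on $[v,\infty)$ and $u^{(\ell)}\nearrow u$, one eventually has $u^{(\ell)}>v$, and then the one-step ratio $E_y(e^{-qT^{(\ell)}_y}\,\cdot\,)/(\cdot)$ that defines the threshold coincides for $g$ and $\tilde g$ on $(v,\infty)$; hence $\tilde u^{(\ell)}=u^{(\ell)}$ and $\tilde V^{(\ell)}=V^{(\ell)}$ for large $\ell$. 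Since $\tilde g>0$ everywhere, Lemma~\ref{l3.2} applies to $\tilde g$ directly, giving $\tilde V=\lim_\ell\tilde V^{(\ell)}=\lim_\ell V^{(\ell)}\le V\le\tilde V$, so $V=\tilde V$ with no Fatou argument on the original $g$ and hence no residual case at all. Your tangent at $u$ only matches $g$ on $[u,\infty)$, while $u^{(\ell)}<u$ in general, so there is no reason for $\tilde u^{(\ell)}=u^{(\ell)}$ to hold; you cannot conclude $\tilde V^{(\ell)}=V^{(\ell)}$, and the proposed ``careful matching via strong Markov at $\sigma=\inf\{t:X_t\ge x_0\}$'' is not worked out and does not obviously salvage it.

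Two smaller gaps. First, your identification $V=V^{(\infty)}$ via Fatou on $\tau^{(\ell)}\downarrow\tau$ needs $\underline\lim_\ell g(X_{\tau^{(\ell)}})\ge g(X_\tau)$ a.s., and when $g$ jumps at $x_0$ this requires controlling the event $\{X_\tau=x_0\}$ in a way you do not do; the paper's route avoids this entirely by never invoking Lemma~\ref{l3.2} for $g$ itself. Second, your continuity argument uses $V/g$ decreasing and $g$ continuous on $(x_0,\infty)$, which handles $x>x_0$, but leaves $x\le x_0$ open (there $g\equiv 0$ or jumps, so the ratio gives nothing); the paper instead concludes continuity from $\tilde V/\tilde g$ decreasing with $\tilde g$ continuous and strictly positive on all of $\mathbb{R}$, plus monotonicity of $\tilde V$. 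So the fix is simply to move the tangent point from $u$ to any fixed $v\in(x_0,u)$ and then run the argument exclusively through $\tilde g$, as in the paper.
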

\begin{proof}
Fix $v\in(x_0,u)$. Let $h(x)=\log g(x)$, and
$$
\tilde{h}(x):=
\begin{cases}
h(v)+h'(v-)(x-v),&\;\;\mbox{if}\;\;x<v;\\
h(x),&\;\;\mbox{otherwise},
\end{cases}
$$
and $\tilde{g}(x):=e^{\tilde{h}(x)}>0$, so that $\tilde{g}(x)\ge g(x)$ for $x\in\mathbb{R}$. Clearly, $\tilde{g}(x)$ is nonconstant, increasing, logconcave and continuous. Define
$\tilde{u}^{(\ell)}$, $\tilde{u}$, $\tilde{V}^{(\ell)}(x)$ and $\tilde{V}(x)$ in terms of $\tilde{g}$ in exactly the same way that $u^{(\ell)}$, $u$, $V^{(\ell)}(x)$ and $V(x)$ are defined in terms of $g$. For (large) $\ell$ with $v<u^{(\ell)}$, the fact that $\tilde{g}(x)=g(x)$ for all $x\ge v$ yields $\tilde{u}^{(\ell)}=u^{(\ell)}$, implying that
$$
\tilde{u}:=\lim_{\ell\to\infty}\tilde{u}^{(\ell)}=\lim_{\ell\to\infty}u^{(\ell)}=u.
$$
Moreover, $\tilde{u}^{(\ell)}=u^{(\ell)}>v$ and $\tilde{g}(x)=g(x)$ for all $x\ge v$ implies that $\tilde{V}^{(\ell)}(x)=V^{(\ell)}(x)$ for $x\in\mathbb{R}$, which in turn implies that for $x\in\mathbb{R}$,
\begin{align*}
V(x)&\le\tilde{V}(x)\\
&=\lim_{\ell\to\infty}\tilde{V}^{(\ell)}(x)\;\;\mbox{(by Lemma \ref{l3.2}(i) applied to $\tilde{g}$)}\\
&=\lim_{\ell\to\infty}V^{(\ell)}(x)\le V(x).
\end{align*}
It follows that $V(x)=\tilde{V}(x)$ for $x\in\mathbb{R}$.

By Lemma \ref{l3.2}(iii) (applied to $\tilde{g}$), $\tilde{V}(x)/\tilde{g}(x)$ is decreasing in $x$, implying that
$$
\frac{\tilde{V}(x-)}{\tilde{g}(x-)}\ge\frac{\tilde{V}(x+)}{\tilde{g}(x+)},\;\;x\in\mathbb{R}.
$$
Since $\tilde{g}(x)=\tilde{g}(x-)=\tilde{g}(x+)>0$, we have $\tilde{V}(x-)\ge \tilde{V}(x+)$, implying that $\tilde{V}(x-)=\tilde{V}(x+)$. So $\tilde{V}(x)$ $(=V(x))$ is a continuous function.

To show the optimality of $\tau_u$, we need to prove
\begin{equation}\label{d1}
V(x)=E_x\left(e^{-q\tau_u}g(X_{\tau_u})\mathbf{1}_{\{\tau_u<\infty\}}\right)\;\;\mbox{for}\;\;x\in\mathbb{R}.
\end{equation}
By Lemma \ref{l3.3}, $V(x)=g(x)$ for $x\ge u$, so that (\ref{d1}) holds for $x\ge u$. For $x<u (=\tilde{u})$, we have by Lemma \ref{l3.5} (applied to $\tilde{g}$)
$$
E_x\left(e^{-q\tau_u}g(X_{\tau_u})\mathbf{1}_{\{\tau_u<\infty\}}\right)=E_x\left(e^{-q\tau_u}\tilde{g}(X_{\tau_u})\mathbf{1}_{\{\tau_u<\infty\}}\right)\ge \tilde{V}^{(\ell)}(x-u+\tilde{u}^{(\ell)}).
$$
It follows that
\begin{align*}
E_x\left(e^{-q\tau_u}g(X_{\tau_u})\mathbf{1}_{\{\tau_u<\infty\}}\right)&\ge\sup_{\ell}\tilde{V}^{(\ell)}(x-u+\tilde{u}^{(\ell)})\\
&\ge\lim_{\varepsilon\downarrow0}\lim_{\ell\to\infty}\tilde{V}^{(\ell)}(x-\varepsilon)\\
&=\tilde{V}(x-)\;\;\mbox{(by Lemma \ref{l3.2}(i) applied to $\tilde{g}$)}\\
&=\tilde{V}(x)=V(x)\ge E_x\left(e^{-q\tau_u}g(X_{\tau_u})\mathbf{1}_{\{\tau_u<\infty\}}\right).
\end{align*}
This establishes (\ref{d1}) and completes the proof.
\end{proof}

\begin{lem}\label{l3.7}
Suppose that $-\infty<u<\infty$ and $\tau_u$ is optimal. Then
\begin{itemize}
\item[\upshape(i)] $g(x)\le E_{x}\left(e^{-q\tau_y}g(X_{\tau_y})\mathbf{1}_{\{\tau_y<\infty\}}\right)$ for $x<y\le u$;
\item[\upshape(ii)]
$E_{x}\left(e^{-q\tau_y}g(X_{\tau_y})\mathbf{1}_{\{\tau_y<\infty\}}\right)\le E_{x}\left(e^{-q\tau_z}g(X_{\tau_z})\mathbf{1}_{\{\tau_z<\infty\}}\right)$ for $x\le y<z\le u$.
\end{itemize}
\end{lem}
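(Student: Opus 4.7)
My plan is to derive (ii) from (i) via a strong-Markov argument at $\tau_y$, and to prove (i) by a case analysis, invoking in the main case the logconcave-extension trick from the proof of Lemma \ref{l3.6} together with the monotonicity of $V/g$ (Lemma \ref{l3.2}(iii)). For (ii), fix $x\le y<z\le u$. On $\{\tau_z<\infty\}$ one has $\tau_y\le\tau_z$. Conditioning on $\mathcal{F}_{\tau_y}$ via the strong Markov property and writing $w:=X_{\tau_y}$: if $w\ge z$, the inner expectation $E_w\bigl(e^{-q\tau_z}g(X_{\tau_z})\mathbf{1}_{\{\tau_z<\infty\}}\bigr)$ equals $g(w)$; if $y\le w<z$, part (i) applied to $(w,z)$ (using $z\le u$) shows it is at least $g(w)$. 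In either case it dominates $g(X_{\tau_y})$, and taking the outer expectation gives (ii).

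For (i), take $x<y\le u$ and split into sub-cases. If $y\le x_0$ then $x<y\le x_0$ forces $g(x)=0$ and the inequality is trivial. If $y=u$, the optimality of $\tau_u$ combined with $V\ge g$ gives $E_x\bigl(e^{-q\tau_u}g(X_{\tau_u})\mathbf{1}_{\{\tau_u<\infty\}}\bigr)=V(x)\ge g(x)$. In the remaining sub-case $x_0<y<u$, I would fix $v\in(x_0,y)$ and form the logconcave extension $\tilde{g}$ from the proof of Lemma \ref{l3.6}, so that $\tilde{g}>0$ everywhere, $\tilde{g}=g$ on $[v,\infty)$, and (as proved there) $\tilde{u}=u$, $\tau_u$ is optimal for $\tilde{g}$, and $\tilde{V}\equiv V$. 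Strong Markov at $\tau_y\le\tau_u$ plus optimality of $\tau_u$ for $\tilde{g}$ yields the dynamic-programming identity
\[
V(x) = E_x\bigl[e^{-q\tau_y}\,\tilde{V}(X_{\tau_y})\,\mathbf{1}_{\{\tau_y<\infty\}}\bigr].
\]
Since $\tilde{g}>0$, Lemma \ref{l3.2}(iii) gives that $\tilde{V}/\tilde{g}$ is decreasing. On $\{\tau_y<\infty\}$ we have $X_{\tau_y}\ge y\ge v$, so $\tilde{g}$ agrees with $g$ at both $y$ and $X_{\tau_y}$, and the monotonicity produces $\tilde{V}(X_{\tau_y})\le (V(y)/g(y))\,g(X_{\tau_y})$. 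Substituting into the identity and rearranging yields $E_x[e^{-q\tau_y}g(X_{\tau_y})\mathbf{1}_{\{\tau_y<\infty\}}]\ge V(x)\,g(y)/V(y)$, while the same monotonicity at $(x,y)$, combined with $\tilde{g}(x)\ge g(x)$, delivers $V(x)g(y)/V(y)\ge g(x)$ (trivial when $g(x)=0$), establishing (i).

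The main point that needs care is the dynamic-programming identity: on $\{\tau_u<\infty\}$ one has $\tau_y\le\tau_u$ because $X_{\tau_u}\ge u\ge y$, so by strong Markov plus optimality of $\tau_u$ for $\tilde{g}$ started from $X_{\tau_y}$ the conditional expectation $E_x\bigl[e^{-q(\tau_u-\tau_y)}\tilde{g}(X_{\tau_u})\mathbf{1}_{\{\tau_u<\infty\}}\mid\mathcal{F}_{\tau_y}\bigr]$ equals $\tilde{V}(X_{\tau_y})$; conversely $\{\tau_y=\infty\}\subseteq\{\tau_u=\infty\}$, on which both sides vanish. The rest is bookkeeping.
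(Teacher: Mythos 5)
Both parts of your proof are correct, but for part (i) you take a genuinely different and heavier route than the paper. The paper's proof of (i) is a short shift argument: for $g(x)>0$, translation invariance of the L\'evy process gives
\[
V(x+u-y)=E_{x+u-y}\left(e^{-q\tau_u}g(X_{\tau_u})\mathbf{1}_{\{\tau_u<\infty\}}\right)=E_x\left(e^{-q\tau_y}g(u-y+X_{\tau_y})\mathbf{1}_{\{\tau_y<\infty\}}\right),
\]
and combining $V\ge g$ (so the left side is at least $g(x+u-y)$) with the logconcavity estimate $g(u-y+X_{\tau_y})/g(u-y+x)\le g(X_{\tau_y})/g(x)$ (valid since $u-y\ge0$ and $X_{\tau_y}\ge y>x$ on $\{\tau_y<\infty\}$) gives the claim at once, with no case split beyond $g(x)=0$ versus $g(x)>0$ and no dependence on Lemma \ref{l3.2} or Lemma \ref{l3.6}. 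Your argument instead imports the extension $\tilde g$ from the proof of Lemma \ref{l3.6}, establishes the dynamic-programming identity $V(x)=E_x\left[e^{-q\tau_y}\tilde V(X_{\tau_y})\mathbf{1}_{\{\tau_y<\infty\}}\right]$, and then invokes the monotonicity of $\tilde V/\tilde g$ from Lemma \ref{l3.2}(iii). This is valid, but it carries more bookkeeping (three sub-cases, the verification that $\tau_u$ is also optimal for $\tilde g$, the two-step chain ending in $V(x)g(y)/V(y)\ge g(x)$), and part of the extra work is redundant since the optimality of $\tau_u$ for $g$ is already a hypothesis of the lemma. The spatial-homogeneity-plus-logconcavity move is the more economical one here; it is the same device the paper uses in the proof of Corollary \ref{c1}(ii), which is worth noticing. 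Your part (ii) is essentially identical to the paper's: both condition on $\mathcal{F}_{\tau_y}$ via the strong Markov property and apply part (i) to the inner expectation on $\{y\le X_{\tau_y}<z\}$.
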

\begin{proof}
(i) The desired inequality holds trivially if $g(x)=0$. Suppose $g(x)>0$. Since $\mathcal{L}(\tau_y,(u-y+X_{\tau_y})\mathbf{1}_{\{\tau_y<\infty\}}\mid X_0=x)=\mathcal{L}(\tau_u,X_{\tau_u}\mathbf{1}_{\{\tau_u<\infty\}}\mid X_0=x+u-y)$,
we have
\begin{align*}
1&\le\frac{V(x+u-y)}{g(x+u-y)}\\
&=\frac{E_{x+u-y}\left(e^{-q\tau_u}g(X_{\tau_u})\mathbf{1}_{\{\tau_u<\infty\}}\right)}{g(x+u-y)}\\
&=\frac{E_{x}\left(e^{-q\tau_y}g(u-y+X_{\tau_y})\mathbf{1}_{\{\tau_y<\infty\}}\right)}{g(u-y+x)}\\
&\le\frac{E_{x}\left(e^{-q\tau_y}g(X_{\tau_y})\mathbf{1}_{\{\tau_y<\infty\}}\right)}{g(x)},
\end{align*}
where the last inequality follows from the logconcavity of $g$.

(ii) Noting that $\tau_z\ge\tau_y$ a.s., we have
\begin{equation}\label{b3}
E_{x}\left(e^{-q\tau_z}g(X_{\tau_z})\mathbf{1}_{\{\tau_z<\infty\}}\right)=E_{x}\left\{e^{-q\tau_y}\mathbf{1}_{\{\tau_y<\infty\}}E_x\left[e^{-q(\tau_z-\tau_y)}g(X_{\tau_y}+(X_{\tau_z}-X_{\tau_y}))\mathbf{1}_{\{\tau_z-\tau_y<\infty\}}\mid\mathcal{F}_{\tau_y}\right]\right\}.
\end{equation}
Since on $\{\tau_y<\infty\}$, $X_{\tau_y+s}-X_{\tau_y}$ is independent of $\mathcal{F}_{\tau_y}$ and has the same law as $X_s-X_0$, we have by part (i) that on $\{\tau_y<\tau_z\}=\{\tau_y<\infty,X_{\tau_y}<z\}$
$$
E_x\left[e^{-q(\tau_z-\tau_y)}g(X_{\tau_y}+(X_{\tau_z}-X_{\tau_y}))\mathbf{1}_{\{\tau_z-\tau_y<\infty\}}\mid\mathcal{F}_{\tau_y}\right]\ge g(X_{\tau_y})\;\;\mbox{a.s.,}
$$
which together with (\ref{b3}) yields the desired inequality.
\end{proof}

\begin{proof}[\bf Proof of Theorem \ref{t3.1}(i)]
If $u=-\infty$, then $u^{(\ell)}=-\infty$ for $\ell=1,2,\dots$, implying that $0<V^{(\ell)}(x)=g(x)$ for all $x\in\mathbb{R}$ and $\ell=1,2,\dots$. It follows from Lemma \ref{l3.2}(i) that
$$
V(x)=\lim_{\ell\to\infty}V^{(\ell)}(x)=g(x)\;\;\mbox{for}\;\;x\in\mathbb{R},
$$
proving that $\tau_u=\tau_{-\infty}$ is optimal.

For $-\infty<u<\infty$, the optimality of $\tau_u$ is established in Lemma \ref{l3.6} if $u>x_0$. It remains to show that for $u=x_0(>-\infty)$,
\begin{equation}\label{b4}
V(x)=E_x\left(e^{-q\tau_u}g(X_{\tau_u})\mathbf{1}_{\{\tau_u<\infty\}}\right)\;\;\mbox{for}\;\;x\in\mathbb{R}.
\end{equation}
By Lemma \ref{l3.3}, $V(x)=g(x)$ for $x\ge u=x_0$, so that (\ref{b4}) holds for $x\ge u=x_0$. For $x<u=x_0$ and any stopping time $\tau\in\mathcal{M}$, we have $X_\tau<x_0$ a.s. on $\{\tau<\tau_{x_0}\}$, so that
\begin{equation}\label{d2}
g(X_\tau)\mathbf{1}_{\{\tau<\tau_{x_0}\}}=0\;\;\mbox{a.s. and}\;\;E_x\left(e^{-q\tau}g(X_\tau)\mathbf{1}_{\{\tau<\tau_{x_0}\}}\right)=0.
\end{equation}
Furthermore, we have $X_{\tau_{x_0}}\ge x_0=u$ a.s. on $\{\tau_{x_0}<\tau\}$, so that by Lemma \ref{l3.3} (with $u=x_0$), on $\{\tau_{x_0}<\tau\}$,
\begin{align*}
g(X_{\tau_{x_0}})&=V(X_{\tau_{x_0}})\\
&\ge E_x\left[e^{-q(\tau-\tau_{x_0})}g\left(X_{\tau_{x_0}}+(X_\tau-X_{\tau_{x_0}})\right)\mathbf{1}_{\{\tau_{x_0}<\tau<\infty\}}\mid\mathcal{F}_{\tau_{x_0}}\right].
\end{align*}
It follows that
$$
E_x\left(e^{-q\tau_{x_0}}g(X_{\tau_{x_0}})\mathbf{1}_{\{\tau_{x_0}<\tau\}}\right)\ge E_x\left(e^{-q\tau}g(X_\tau)\mathbf{1}_{\{\tau_{x_0}<\tau<\infty\}}\right),
$$
which together with (\ref{d2}) implies that
$$
E_x\left(e^{-q\tau_{x_0}}g(X_{\tau_{x_0}})\mathbf{1}_{\{\tau_{x_0}<\infty\}}\right)\ge E_x\left(e^{-q\tau}g(X_\tau)\mathbf{1}_{\{\tau<\infty\}}\right).
$$
Since $\tau\in\mathcal{M}$ is arbitrary, (\ref{b4}) follows. The proof is complete.
\end{proof}

\begin{rem}
If $u=x_0>-\infty$, then $V(x)$ is not necessarily continuous.
\end{rem}


\begin{proof}[\bf Proof of Theorem \ref{t3.1}(ii)]
Assume $u=\infty$. We claim that
\begin{equation}\label{b5}
E_x\left(e^{-q\tau_y}g(X_{\tau_y})\mathbf{1}_{\{\tau_y<\infty\}}\right)\;\;\mbox{is increasing in}\;\;y.
\end{equation}
Consider an increasing sequence $\{b_k\}$ satisfying $b_1>x_0$ and $\lim_{k\to\infty}b_k=\infty$. Let $g_k(x)=g(x\wedge b_k)$, which is nonconstant, increasing, logconcave and right-continuous. For $\ell=1,2,\dots$, let
$$
u_k^{(\ell)}:=\inf\left\{x\in\mathbb{R}:\frac{E_x\left(e^{-qT_x^{(\ell)}}g_k(X_{T_x^{(\ell)}})\mathbf{1}_{\{T_x^{(\ell)}<\infty\}}\right)}{g_k(x)}\le1\right\}\le b_k.
$$
Let $u_k=\lim_{\ell\to\infty}u_{k}^{(\ell)}\le b_k<\infty$. In other words, $u_k$ is defined in terms of $g_k$ in exactly the same way that $u$ is defined in terms of $g$. Since $u_k<\infty$, we have by Theorem \ref{t3.1}(i)
\begin{align}
V_k(x)&:=\sup_{\tau\in\mathcal{M}}E_x\left(e^{-q\tau}g_k(X_{\tau})\mathbf{1}_{\{\tau<\infty\}}\right)\notag\\
&=E_x\left(e^{-q\tau_{u_k}}g_k(X_{\tau_{u_k}})\mathbf{1}_{\{\tau_{u_k}<\infty\}}\right),\;\;x\in\mathbb{R}\label{b6}.
\end{align}
Thus, $V_k(x)=g_k(x)$ for $x\ge u_k$ and $V_k(x)>g_k(x)$ for $x<u_k$. (Note that by Corollary \ref{c1}(i),
$g_k(x)<V^{(\ell)}_k(x)\le V_k(x)$ for $x<u^{(\ell)}_k\le u_k$ where $V^{(\ell)}_k(x)$ is the supremum of $E_x\left(e^{-q\tau}g_k(X_\tau)\mathbf{1}_{\{\tau<\infty\}}\right)$ over $\tau\in\mathcal{M}^{(\ell)}$.) Since $g_k$ is increasing in $k$, it is easily shown that both $V_k$ and $u_k$ are increasing. Let $V_\infty(x)=\lim_{k\to\infty}V_k(x)$ and $u_\infty=\lim_{k\to\infty}u_k$. Then clearly $V_\infty(x)=V(x)$. Since $u^{(\ell)}_k\nearrow u^{(\ell)}$ as $k\to\infty$ and $u^{(\ell)}_k\nearrow u_k$ as $\ell\to\infty$, it follows that $u=\lim_{\ell\to\infty}u^{(\ell)}=\infty$ implies $u_\infty=\lim_{k\to\infty}u_k=\infty$. Incidentally, since $V(x)\ge V_k(x)>g_k(x)=g(x)$ for $x<u_k\le b_k$, we have \begin{equation}\label{d5}
V(x)>g(x)\;\;\mbox{for all}\;\;x\in\mathbb{R}.
\end{equation}

To prove (\ref{b5}), we need to show for $x\le y_1<y_2$ that
\begin{equation}\label{b7}
E_x\left(e^{-q\tau_{y_1}}g(X_{\tau_{y_1}})\mathbf{1}_{\{\tau_{y_1}<\infty\}}\right)\le E_x\left(e^{-q\tau_{y_2}}g(X_{\tau_{y_2}})\mathbf{1}_{\{\tau_{y_2}<\infty\}}\right).
\end{equation}
For large $k$ with $u_k>y_2$, applying Lemma \ref{l3.7} to $g_k$ yields
\begin{equation}\label{b8}
E_x\left(e^{-q\tau_{y_1}}g_k(X_{\tau_{y_1}})\mathbf{1}_{\{\tau_{y_1}<\infty\}}\right)\le E_x\left(e^{-q\tau_{y_2}}g_k(X_{\tau_{y_2}})\mathbf{1}_{\{\tau_{y_2}<\infty\}}\right).
\end{equation}
By the monotone convergence theorem, the two sides of (\ref{b8}) converge to the corresponding sides of (\ref{b7}), respectively. This proves (\ref{b7}) and establishes the claim (\ref{b5}).

By (\ref{b5}),
\begin{align*}
W&:=\sup_{y\in\mathbb{R}}E\left(e^{-q\tau_{y}}g(X_{\tau_{y}})\mathbf{1}_{\{\tau_{y}<\infty\}}\right)\\ &=\lim_{y\to\infty}E\left(e^{-q\tau_{y}}g(X_{\tau_{y}})\mathbf{1}_{\{\tau_{y}<\infty\}}\right).
\end{align*}
Following the argument for (\ref{v7}) in the proof of Theorem \ref{t2.1}(ii), we can show that
\begin{align}
Q(x)&:=\sup_{y\in\mathbb{R}}E_{x}\left(e^{-q\tau_{y}}g(X_{\tau_{y}})\mathbf{1}_{\{\tau_{y}<\infty\}}\right)\notag\\ &=\lim_{y\to\infty}E_{x}\left(e^{-q\tau_{y}}g(X_{\tau_{y}})\mathbf{1}_{\{\tau_{y}<\infty\}}\right)=e^{\beta x}W.\label{d3}
\end{align}
Furthermore, we have
\begin{align*}
V(x)=V_\infty(x)&=\lim_{k\to\infty}V_k(x)\\
&=\lim_{k\to\infty}E_x\left(e^{-q\tau_{u_k}}g_k(X_{\tau_{u_k}})\mathbf{1}_{\{\tau_{u_k}<\infty\}}\right)\;\;\;\mbox{(by (\ref{b6}))}\\
&\le\lim_{k\to\infty}E_x\left(e^{-q\tau_{u_k}}g(X_{\tau_{u_k}})\mathbf{1}_{\{\tau_{u_k}<\infty\}}\right)\\
&=\lim_{y\to\infty}E_{x}\left(e^{-q\tau_{y}}g(X_{\tau_{y}})\mathbf{1}_{\{\tau_{y}<\infty\}}\right)\;\;\;\mbox{(by (\ref{b5}) and}\;u_k\nearrow\infty)\\
&=e^{\beta x}W\;\;\;\mbox{(by (\ref{d3}))}\\
&\le V(x),
\end{align*}
implying that
\begin{equation}\label{d4}
V(x)=Q(x)=e^{\beta x}W\;\;\mbox{for}\;\;x\in\mathbb{R}.
\end{equation}
If $W=\infty$, it is readily seen that there are randomized stopping times that yield an infinite expected (discounted) reward. Suppose $W<\infty$. To show
\begin{equation}\label{d6}
V(x)>E\left(e^{-q\tau}g(X_{\tau})\mathbf{1}_{\{\tau<\infty\}}\right)\;\;\mbox{for all}\;\;\tau\in\mathcal{M},
\end{equation}
we first prove that $\{e^{-qt}Q(X_t)=e^{\beta X_t-qt}W\}_{t\ge0}$ is a (positive) supermartingale, or equivalently, $E(e^{\beta X_t-qt})\le1$ for $t>0$. Since $E(e^{\beta X_t-qt})=(E(e^{\beta X_1-q}))^{t}$, it suffices to show $E(e^{\beta X_1-q})\le1$. Suppose to the contrary that $E(e^{\beta X_1-q})>1$. Then for any $x$ with $h(x)=\log g(x)>-\infty$, let
$$
\tilde{h}(y)=h(x)+\beta(y-x)\;\;\mbox{for}\;\;y\in\mathbb{R}.
$$
Since $h'(y-)\ge\beta$ for $y\in\mathbb{R}$, we have $\tilde{h}(y)\le h(y)$ and $\tilde{g}(y):=e^{\tilde{h}(y)}\le g(y)$ for $y\ge x$. Then
\begin{align*}
\infty>e^{\beta x}W=V(x)&\ge E_x(e^{-qt}g(X_t))\\
&\ge E_x\left(e^{-qt}g(X_t)\mathbf{1}_{\{X_t\ge x\}}\right)\\
&\ge E_x\left(e^{-qt}\tilde{g}(X_t)\mathbf{1}_{\{X_t\ge x\}}\right)\\
&=E_x\left(e^{-qt}\tilde{g}(X_t)\right)-E_x\left(e^{-qt}\tilde{g}(X_t)\mathbf{1}_{\{X_t<x\}}\right)\\
&=E_x\left(e^{-qt+h(x)+\beta (X_t-x)}\right)-E_x\left(e^{-qt}\tilde{g}(X_t)\mathbf{1}_{\{X_t<x\}}\right)\\
&\ge g(x)\left(E(e^{\beta X_1-q})\right)^{t}-g(x)\to\infty\;\;\mbox{as}\;\;t\to\infty,
\end{align*}
a contradiction. So $\{e^{\beta X_t-qt}W\}_{t\ge0}$ is a positive supermartingale. To show (\ref{d6}), it suffices to consider $\tau\in\mathcal{M}$ with $P(\tau<\infty)>0$. Then
\begin{align*}
V(x)&=e^{\beta x}W\\
&\ge E_x\left(e^{\beta X_{\tau}-q\tau}W\mathbf{1}_{\{\tau<\infty\}}\right)\\
&=E_x\left(e^{-q\tau}V(X_{\tau})\mathbf{1}_{\{\tau<\infty\}}\right)\\
&>E_x\left(e^{-q\tau}g(X_{\tau})\mathbf{1}_{\{\tau<\infty\}}\right),
\end{align*}
where the last (strict) inequality follows from (\ref{d5}) and $P(\tau<\infty)>0$. The proof is complete.
\end{proof}

\begin{rem}
Previously Lemma \ref{l3.2} was established under the assumption of $x_0=-\infty$. Examining the proof of Theorem \ref{t3.1} shows that Lemma \ref{l3.2} remains true for $x_0>-\infty$. In particular, we have $V(x)>g(x)$ for $x<u$ and $V(x)=g(x)$ for $x\ge u$.
\end{rem}

\section{On the principle of smooth fit for L\'{e}vy processes}
\hspace*{18pt}In this section we investigate the principle of smooth fit for L\'{e}vy processes. Let $X=\{X_t\}_{t\ge0}$ be a L\'{e}vy process with initial state $X_0=x\in\mathbb{R}$ and assume $P(X_1>0)>0$. For $y\in\mathbb{R}$, let
$\tau_y=\inf\{t\ge0:X_t\ge y\}$ and $\tau_{y+}=\inf\{t\ge0:X_t>y\}$.
\begin{thm}\label{g6}
Let $g:\mathbb{R}\to[0,\infty)$ be nonconstant, increasing, logconcave and right-continuous. Define $V(x)$ and $u$ as in $(\ref{e3.1})$ and $(\ref{b2})$. Suppose $-\infty\le x_0<u<\infty$ and $g$ is differentiable at $u$ $(\mbox{i.e.}\; g'(u-)=g'(u+)=g'(u))$. If $0$ is regular for $(0,\infty)$ for $X$, then $V$ is differentiable at $u$, i.e. $V'(u-)=V'(u+)$ $(=g'(u))$.
\end{thm}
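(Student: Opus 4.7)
My plan is to split the smooth-fit equality $V'(u) = g'(u)$ into its right and left derivatives. The right-hand derivative is immediate: Theorem \ref{t3.1}(i) gives $V \equiv g$ on $[u,\infty)$, so $V'(u+) = g'(u+) = g'(u)$. For the easy half of the left derivative, using that $V \geq g$ (the Lévy analogue of Corollary \ref{c1}(i), valid by the remark at the end of Section 3) and $V(u) = g(u)$, we get for $h > 0$
$$\frac{V(u) - V(u-h)}{h} \;\leq\; \frac{g(u) - g(u-h)}{h} \;\longrightarrow\; g'(u-) \;=\; g'(u) \quad\text{as } h \downarrow 0,$$
whence $\limsup_{h\downarrow 0}(V(u)-V(u-h))/h \leq g'(u)$.

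The nontrivial direction is the matching lower bound. By space-translation invariance of the Lévy process together with the optimality of $\tau_u$ (Theorem \ref{t3.1}(i)),
$$V(u-h) \;=\; E_0\!\left[e^{-qT(h)}\, g(u + O_h)\, \mathbf{1}_{\{T(h)<\infty\}}\right],$$
where $T(h):=\inf\{t\geq 0: X_t\geq h\}$ and $O_h:=X_{T(h)}-h\geq 0$ are computed under $P_0$. The regularity hypothesis forces $T(h)\downarrow 0$ almost surely, and right-continuity of paths then yields $X_{T(h)}\to 0$, hence $O_h\to 0$ a.s. Substituting the first-order expansion of $g$ at $u$ and rearranging yields
$$V(u-h) - g(u-h) \;=\; g'(u)\,E_0\!\left[e^{-qT(h)}(h+O_h)\mathbf{1}_{\{T(h)<\infty\}}\right] - g(u)\,E_0\!\left[1-e^{-qT(h)}\mathbf{1}_{\{T(h)<\infty\}}\right] + o(h),$$
so the task reduces to showing that the two displayed leading terms cancel to order $o(h)$.

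I expect this cancellation to be the main obstacle. It is a manifestation of the optimality of the threshold: the infimum definition \eqref{b1}--\eqref{b2} of $u$, passed to the continuous-time limit, says that at $u$ the marginal discounting loss $g(u)(1-E_0[e^{-qT(h)}])$ is balanced against the marginal reward gain $g'(u)E_0[h+O_h]$ at first passage. To make this precise I would combine the discretisations $u^{(\ell)}\uparrow u$ and the equalities $E_{u^{(\ell)}}[e^{-qT_{u^{(\ell)}}^{(\ell)}}\,g(X_{T_{u^{(\ell)}}^{(\ell)}})\mathbf{1}_{\{T_{u^{(\ell)}}^{(\ell)}<\infty\}}]=g(u^{(\ell)})$ (the Lévy analogue of \eqref{e2.17}), together with the logconcavity bound $g(u+y)\leq g(u)\exp((g'(u)/g(u))y)$ to control the higher-order remainders uniformly. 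Some care is required because for a general Lévy process with jumps $E_0[O_h]$ need not itself be $o(h)$; only the combined leading-order expression vanishes at the optimal threshold, and this is precisely where the hypothesis that $0$ be regular for $(0,\infty)$ enters in an essential way.
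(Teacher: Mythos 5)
Your right-derivative identity and the upper bound for the left difference quotient $(V(u)-V(u-h))/h \le (g(u)-g(u-h))/h$ are both correct and match the paper. The problem is the lower bound, where your plan would not go through as stated. The claimed decomposition
$$V(u-h) - g(u-h) = g'(u)E_0\!\left[e^{-qT(h)}(h+O_h)\mathbf{1}_{\{T(h)<\infty\}}\right] - g(u)E_0\!\left[1-e^{-qT(h)}\mathbf{1}_{\{T(h)<\infty\}}\right] + o(h)$$
requires the Taylor remainder $g(u+O_h)-g(u)-g'(u)\,O_h$ to contribute only $o(h)$ in expectation, but the pointwise remainder is $o(O_h)$ as $O_h\to 0$, not $o(h)$, and the overshoot $O_h$ is not comparable to $h$: its distribution does not scale with $h$ in any useful way and it can be unbounded. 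Knowing that $O_h\to 0$ a.s.\ (which you correctly derive from regularity and right-continuity) gives convergence to zero but no rate, so the ``$+o(h)$'' has no justification. In addition, even granting the decomposition, the cancellation of the two leading-order terms is essentially the whole theorem, and you only sketch a strategy for it (combining the discretisations $u^{(\ell)}$ with a logconcavity bound) without making it precise.

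The paper avoids both difficulties by never comparing $V(u-h)$ with $g(u-h)$ directly. Instead it compares two value functions: $V(u) \ge E_u\!\left(e^{-q\tau_{u+\varepsilon}}g(X_{\tau_{u+\varepsilon}})\mathbf{1}_{\{\tau_{u+\varepsilon}<\infty\}}\right)$ because $\tau_{u+\varepsilon}$ is a feasible stopping time, and $V(u-\varepsilon) = E_{u-\varepsilon}\!\left(e^{-q\tau_u}g(X_{\tau_u})\mathbf{1}_{\{\tau_u<\infty\}}\right)$ by optimality of $\tau_u$. After spatial translation both expectations are taken over the same random pair $(\tau_\varepsilon, X_{\tau_\varepsilon})$ under $P_0$, so the difference collapses to a single expectation of a nonnegative quantity $e^{-q\tau_\varepsilon}\left(g(u+X_{\tau_\varepsilon})-g(u-\varepsilon+X_{\tau_\varepsilon})\right)$. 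The concavity of $h=\log g$ then yields the pointwise lower bound $g(u+z)-g(u-\varepsilon+z)\ge e^{h(u-\varepsilon+z)}\bigl[e^{\varepsilon h'((u+z)+)}-1\bigr]$, which is controlled uniformly in the overshoot $z$ (no Taylor expansion of $g$ is invoked), and Fatou's lemma together with $\tau_\varepsilon\downarrow\tau_{0+}=0$ (regularity) delivers the lower bound $g'(u+)$. The coupling of the two value functions on a common sample path and the pointwise logconcavity estimate, rather than a local expansion of $g$, are the ideas you are missing.
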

\begin{proof}
Since $V(u-\varepsilon)>g(u-\varepsilon)$ for $\varepsilon>0$ and $V(u)=g(u)$, we have
\begin{equation}\label{d11}
\overline{\lim_{\varepsilon\downarrow0}}\left(\frac{V(u)-V(u-\varepsilon)}{\varepsilon}\right)\le\overline{\lim_{\varepsilon\downarrow0}}\left(\frac{g(u)-g(u-\varepsilon)}{\varepsilon}\right)=g'(u-)=g'(u).
\end{equation}
Since $\mathcal{L}(\tau_{u+\varepsilon},X_{\tau_{u+\varepsilon}}\mathbf{1}_{\{\tau_{u+\varepsilon}<\infty\}}\mid X_0=u)=\mathcal{L}(\tau_{\varepsilon},(u+X_{\tau_{\varepsilon}})\mathbf{1}_{\{\tau_{\varepsilon}<\infty\}}\mid X_0=0)$ and
$$
\mathcal{L}(\tau_{u},X_{\tau_{u}}\mathbf{1}_{\{\tau_{u}<\infty\}}\mid X_0=u-\varepsilon)=\mathcal{L}(\tau_{\varepsilon},(u-\varepsilon+X_{\tau_{\varepsilon}})\mathbf{1}_{\{\tau_{\varepsilon}<\infty\}}\mid X_0=0),
$$
we have
\begin{align}
\frac{V(u)-V(u-\varepsilon)}{\varepsilon}&\ge\frac{1}{\varepsilon}\left[E_u\left(e^{-q\tau_{u+\varepsilon}}g(X_{\tau_{u+\varepsilon}})\mathbf{1}_{\{\tau_{u+\varepsilon}<\infty\}}\right)-E_{u-\varepsilon}\left(e^{-q\tau_{u}}g(X_{\tau_{u}})\mathbf{1}_{\{\tau_{u}<\infty\}}\right)\right]\notag\\
&=\frac{1}{\varepsilon}\left[E\left(e^{-q\tau_{\varepsilon}}g(u+X_{\tau_{\varepsilon}})\mathbf{1}_{\{\tau_{\varepsilon}<\infty\}}\right)-E\left(e^{-q\tau_{\varepsilon}}g(u-\varepsilon+X_{\tau_{\varepsilon}})\mathbf{1}_{\{\tau_{\varepsilon}<\infty\}}\right)\right]\notag\\
&=\frac{1}{\varepsilon}E\left[e^{-q\tau_{\varepsilon}}\left(g(u+X_{\tau_{\varepsilon}})-g(u-\varepsilon+X_{\tau_{\varepsilon}})\right)\mathbf{1}_{\{\tau_{\varepsilon}<\infty\}}\right]\label{d12}.
\end{align}
By the concavity of $h(x)=\log g(x)$, we have on $\{\tau_{\varepsilon}<\infty\}$
\begin{align}
g(u+X_{\tau_{\varepsilon}})-g(u-\varepsilon+X_{\tau_{\varepsilon}})&=e^{h(u-\varepsilon+X_{\tau_{\varepsilon}})}\left[e^{h(u+X_{\tau_{\varepsilon}})-h(u-\varepsilon+X_{\tau_{\varepsilon}})}-1\right]\notag\\
&\ge e^{h(u-\varepsilon+X_{\tau_{\varepsilon}})}\left[e^{\varepsilon h'((u+X_{\tau_{\varepsilon}})+)}-1\right]\notag\\
&=e^{h(u-\varepsilon+X_{\tau_{\varepsilon}})}e^{\theta\varepsilon h'((u+X_{\tau_{\varepsilon}})+)}\varepsilon h'((u+X_{\tau_{\varepsilon}})+)\label{d13}
\end{align}
for some $\theta\in(0,1)$ by the mean value theorem applied to the function $e^x$. It follows from
(\ref{d12}) and (\ref{d13}) that
\begin{align}
\mathop{\underline{\lim}}_{\varepsilon\downarrow0}\left(\frac{V(u)-V(u-\varepsilon)}{\varepsilon}\right)&\ge\mathop{\underline{\lim}}_{\varepsilon\downarrow0}E\left(e^{-q\tau_{\varepsilon}+h(u-\varepsilon+X_{\tau_{\varepsilon}})+\theta\varepsilon h'((u+X_{\tau_{\varepsilon}})+)}h'((u+X_{\tau_{\varepsilon}})+)\mathbf{1}_{\{\tau_{\varepsilon}<\infty\}}\right)\notag\\
&\ge E\left(\mathop{\underline{\lim}}_{\varepsilon\downarrow0}e^{-q\tau_{\varepsilon}+h(u-\varepsilon+X_{\tau_{\varepsilon}})+\theta\varepsilon h'((u+X_{\tau_{\varepsilon}})+)}h'((u+X_{\tau_{\varepsilon}})+)\mathbf{1}_{\{\tau_{\varepsilon}<\infty\}}\right)\notag\\
&=E\left(e^{-q\tau_{0+}}e^{h(u+X_{\tau_{0+}})}h'((u+X_{\tau_{0+}})+)\mathbf{1}_{\{\tau_{0+}<\infty\}}\right)\notag\\
&=e^{h(u)}h'(u+)\label{d14},
\end{align}
where the second-to-last equality follows from the fact that $\tau_{\varepsilon}\downarrow\tau_{0+}$ as $\varepsilon\downarrow0$ together with the right-continuity of $\{X_t\}$ and the concavity of $h$, and the last equality follows from $P(\tau_{0+}=0)=1$ (since $0$ is regular for $(0,\infty)$). Combining (\ref{d11}) and (\ref{d14}) together with $e^{h(u)}h'(u+)=g'(u+)=g'(u)$ yields that $V'(u-)=g'(u)=V'(u+)$. The proof is complete.
\end{proof}

\begin{rem}
For a L\'{e}vy process $X$ with $0$ regular for $(0,\infty)$, Theorem \ref{g6} shows that the smooth fit principle holds if $g$ is differentiable at $u$ $($the optimal stopping boundary$)$. It is easy to show by example that the value function may fail to satisfy the smooth fit condition if $g'(u-)\neq g'(u+)$. Let $X$ be standard Brownian motion. For $($fixed$)$ $q>0$, consider $g(x)=\mathbf{1}_{[0,\infty)}(x)$, for which we have $u=0$. For $x<0$,
\begin{align*}
V(x)&=E_x(e^{-q\tau_0})\\
&=E(e^{-q\tau_{-x}})=e^{x\sqrt{2q}}.
\end{align*}
Then $V(x)=\min\{e^{x\sqrt{2q}},1\}$. More generally, for any $g^*(x)$ with $g^*(x)=g(x)$ for $x\ge0$ and $g^*(x)<V(x)$ for $x<0$, it is readily shown that $\tau_0$ is the optimal stopping time and $V(x)=\min\{e^{x\sqrt{2q}},1\}$ is the value function. For example, the reward function $g^*(x)=\min\{e^{rx},1\}$ with $\sqrt{2q}<r<\infty$ is increasing and logconcave with $(g^*)'(0-)=r>0=(g^*)'(0+)$, for which the value function $V(x)=\min\{e^{x\sqrt{2q}},1\}$ is not differentiable at $0$.
\end{rem}

Next, we examine the principle of smooth fit for L\'{e}vy processes with $0$ irregular for $(0,\infty)$.
\begin{thm}\label{g3}
Let $g:\mathbb{R}\to[0,\infty)$ be nonconstant, increasing, logconcave and right-continuous. Let $h(x)=\log g(x)$. Define $V(x)$ and $u$ as in $(\ref{e3.1})$ and $(\ref{b2})$. Suppose $-\infty\le x_0<u<\infty$ and $0$ is irregular for $(0,\infty)$ for $X$. Then
\begin{enumerate}
\item[\upshape(i)] $V'(u-)=E\left(e^{-q\tau_{0+}}g'((u+X_{\tau_{0+}})-)\mathbf{1}_{\{\tau_{0+}<\infty\}}\right)$.
\item[\upshape(ii)] $V'(u-)=V'(u+)$ $(=g'(u+))$ if and only if
    \begin{equation}\label{a1}
    h'((u+\zeta)-)=h'(u+)
    \end{equation}
where $\zeta=\inf\{x:P(X_{\tau_{0+}}>x\mid\tau_{0+}<\infty)=0\}$, the essential supremum of the $($conditional$)$ distribution $\mathcal{L}(X_{\tau_{0+}}\mid X_0=0,\tau_{0+}<\infty)$, and where $h'((u+\zeta)-)=\lim_{x\to\infty}h'(x-)$ if $\zeta=\infty$.
\item[\upshape(iii)] $V(x)=g(u)e^{h'(u+)(x-u)}$ for $x<u$, provided that condition $(\ref{a1})$ holds.
\end{enumerate}
\end{thm}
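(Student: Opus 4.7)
The plan rests on two representations of $V$ that come from the optimality of $\tau_u$ (Theorem \ref{t3.1}) and of $\tau_{u+}$ (Remark \ref{r1}, applicable since $x_0<u$): namely,
\[
V(u-\varepsilon)=E\bigl(e^{-q\tau_{\varepsilon}}g(u-\varepsilon+X_{\tau_{\varepsilon}})\mathbf{1}_{\{\tau_{\varepsilon}<\infty\}}\bigr),\;V(u)=g(u)=E\bigl(e^{-q\tau_{0+}}g(u+X_{\tau_{0+}})\mathbf{1}_{\{\tau_{0+}<\infty\}}\bigr),
\]
together with the dynamic-programming identity $V(u-\varepsilon)=E[e^{-q\tau_{0+}}V(u-\varepsilon+X_{\tau_{0+}})\mathbf{1}_{\{\tau_{0+}<\infty\}}]$ obtained by strong Markov at the first strict exceedance of $u-\varepsilon$. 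Irregularity guarantees $X_{\tau_{0+}}>0$ a.s.\ on $\{\tau_{0+}<\infty\}$, so on $A_\varepsilon:=\{X_{\tau_{0+}}>\varepsilon,\tau_{0+}<\infty\}$ (which increases to $\{\tau_{0+}<\infty\}$) we have $\tau_\varepsilon=\tau_{0+}$ and $X_{\tau_\varepsilon}=X_{\tau_{0+}}$, whence on $A_\varepsilon$ the identity $V(u-\varepsilon+X_{\tau_{0+}})=g(u-\varepsilon+X_{\tau_{0+}})$ holds.

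For part (i), subtracting the two representations of $V(u)$ and $V(u-\varepsilon)$ and splitting by $A_\varepsilon$ gives
\[
\frac{V(u)-V(u-\varepsilon)}{\varepsilon}=E\!\left[e^{-q\tau_{0+}}\frac{g(u+X_{\tau_{0+}})-g(u-\varepsilon+X_{\tau_{0+}})}{\varepsilon}\mathbf{1}_{A_\varepsilon}\right]+\frac{R_\varepsilon}{\varepsilon},
\]
with residual $R_\varepsilon:=E[e^{-q\tau_{0+}}(g(u+X_{\tau_{0+}})-V(u-\varepsilon+X_{\tau_{0+}}))\mathbf{1}_{\{X_{\tau_{0+}}\leq\varepsilon\}}]\geq 0$. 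On $A_\varepsilon$ the difference quotient converges pointwise to $g'((u+X_{\tau_{0+}})-)$; the inequality $g(y)-g(y-\varepsilon)\leq g(y)(h(y)-h(y-\varepsilon))$ together with the monotonicity of the secant slopes of the concave $h$ supplies an integrable dominating function, so dominated convergence delivers the main term $E[e^{-q\tau_{0+}}g'((u+X_{\tau_{0+}})-)\mathbf{1}]$. For the residual, the bound $0\leq R_\varepsilon\leq (g(u+\varepsilon)-V(u-\varepsilon))P(X_{\tau_{0+}}\leq\varepsilon,\tau_{0+}<\infty)$ forces $R_\varepsilon/\varepsilon\to 0$: the first factor is $O(\varepsilon)$ because $V(u-\varepsilon)\geq g(u-\varepsilon)$ and $g$ is Lipschitz near $u$, while the second is $o(1)$ by irregularity.

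For part (ii), $V(x)=g(x)$ on $[u,\infty)$ gives $V'(u+)=g'(u+)=g(u)h'(u+)$. Writing $g'(y-)=g(y)h'(y-)$ and using $g(u)=E[e^{-q\tau_{0+}}g(u+X_{\tau_{0+}})\mathbf{1}]$, part (i) becomes $V'(u-)/g(u)=E_{Q}[h'((u+X_{\tau_{0+}})-)]$, where $Q$ is the probability on $\{\tau_{0+}<\infty\}$ with density proportional to $e^{-q\tau_{0+}}g(u+X_{\tau_{0+}})$. Since $h'$ is decreasing and $X_{\tau_{0+}}>0$, $V'(u-)\leq V'(u+)$, with equality iff $h'((u+X_{\tau_{0+}})-)=h'(u+)$ $Q$-a.s. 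Because $Q$ shares its support with $\mathcal{L}(X_{\tau_{0+}}\mid\tau_{0+}<\infty)$ and $h'(\cdot-)$ is decreasing and left-continuous, taking $X_{\tau_{0+}}$ along a sequence approaching $\zeta$ shows this a.s.\ condition is equivalent to $h'$ being constant on $(u,u+\zeta]$, i.e.\ to (\ref{a1}).

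For part (iii), assume (\ref{a1}) and set $\kappa:=h'(u+)$, so $g(u+z)=g(u)e^{\kappa z}$ on $(0,\zeta]$. By Theorem \ref{t3.1}(i), $V(x)=E[e^{-q\tau_{u-x}}g(x+X_{\tau_{u-x}})\mathbf{1}_{\{\tau_{u-x}<\infty\}}]$ for $x<u$. The crucial geometric fact is that the overshoot $X_{\tau_a}-a\leq\zeta$ a.s.\ on $\{\tau_a<\infty\}$ for every $a>0$: when $0$ is irregular for $(0,\infty)$, the ascending ladder-height process $H$ is a (possibly killed) compound Poisson process whose jump law equals $\mathcal{L}(X_{\tau_{0+}}\mid\tau_{0+}<\infty)$, hence is supported on $(0,\zeta]$, and $X_{\tau_a}$ coincides with the first $H_n\geq a$, whose overshoot is bounded by the crossing jump. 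Therefore $g(x+X_{\tau_{u-x}})=g(u)e^{\kappa(x-u+X_{\tau_{u-x}})}$ a.s., and
\[
V(x)=g(u)e^{\kappa(x-u)}E\bigl(e^{-q\tau_{u-x}+\kappa X_{\tau_{u-x}}}\mathbf{1}_{\{\tau_{u-x}<\infty\}}\bigr),
\]
so it remains to prove $E[e^{-q\tau_a+\kappa X_{\tau_a}}\mathbf{1}_{\{\tau_a<\infty\}}]=1$ for every $a>0$. Substituting $g(u+z)=g(u)e^{\kappa z}$ into $g(u)=E[e^{-q\tau_{0+}}g(u+X_{\tau_{0+}})\mathbf{1}]$ yields this for $a=0+$; for general $a$ one iterates via the ladder decomposition, writing $\eta_i:=e^{-qT_i+\kappa Z_i}$ for the $i$th inter-arrival time $T_i$ and ladder jump $Z_i$ of $H$ (with $\eta_i:=0$ on the killing event), observing that the $\eta_i$ are i.i.d.\ with mean $1$, so their partial products form a nonnegative martingale bounded by $e^{\kappa(a+\zeta)}$ when stopped at the first index $n$ with $H_n\geq a$; bounded optional sampling then produces the identity. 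The most delicate technical points, I expect, are the $o(\varepsilon)$ estimate of $R_\varepsilon$ in (i) and this bounded optional-sampling argument in (iii), both of which depend crucially on the strict positivity $X_{\tau_{0+}}>0$ supplied by irregularity.
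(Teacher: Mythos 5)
Your proof is correct and follows essentially the same route as the paper for all three parts. For (i) your split along $A_\varepsilon=\{X_{\tau_{0+}}>\varepsilon\}$ is an equivalent rearrangement of the paper's decomposition $\frac{V(u)-V(u-\varepsilon)}{\varepsilon}=A(\varepsilon)-B(\varepsilon)$ (where $A$ integrates the $g$-difference quotient over all of $\{\tau_{0+}<\infty\}$ and $B$ tracks $V-g$, which is nonzero only when $X_{\tau_{0+}}<\varepsilon$), and both identify the limit by dominated convergence using concavity of $h$ plus the key consequence of irregularity that $P(X_{\tau_{0+}}=0,\tau_{0+}<\infty)=0$. For (ii) your tilted measure $Q$ is just a repackaging of the paper's inequality chain $V'(u-)=E(e^{-q\tau_{0+}}h'((u+X_{\tau_{0+}})-)g(u+X_{\tau_{0+}})\mathbf{1})\le h'(u+)g(u)$. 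For (iii) the paper sets $\widetilde V(y)=g(u)e^{h'(u+)(y-u)}$, proves $E_y(e^{-q\tau_{y+}}\widetilde V(X_{\tau_{y+}})\mathbf{1})=\widetilde V(y)$, and runs a dominated-convergence argument along the ladder-epoch sequence $L_n=\min\{J_n,\tau_{u+}\}$; this is precisely the martingale $M_n=e^{-qS_n+\kappa H_n}$ (up to the multiplicative constant $g(u)e^{-\kappa u}$) that you stop at the first ladder height $\ge a$, so your bounded optional-sampling step is the paper's dominated-convergence step in different clothing. One small point in your favor: you explicitly record and use the bounded-overshoot fact $X_{\tau_{u+}}-u\le\zeta$ (compound-Poisson ladder heights), which the paper invokes only implicitly when it writes $\widetilde V(X_{\tau_{u+}})=g(X_{\tau_{u+}})$ via its (\ref{a12}); making this explicit is a clarity improvement, not a divergence.
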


To prove Theorem \ref{g3}, we need the following lemmas.
\begin{lem}\label{g4}
Let $g:\mathbb{R}\to[0,\infty)$ be nonconstant, increasing, logconcave and right-continuous. Suppose $0$ is irregular for $(0,\infty)$ for $X$ and
\begin{equation}\label{d24}
\frac{E_x\left(e^{-q\tau_{x+}}g(X_{\tau_{x+}})\mathbf{1}_{\left\{\tau_{x+}<\infty\right\}}\right)}{g(x)}\le1\;\;\mbox{for some}\;\;x>x_0.
\end{equation}
Then
$$
g(x)\ge E_x\left(e^{-q\tau_{y}}g(X_{\tau_{y}})\mathbf{1}_{\left\{\tau_{y}<\infty\right\}}\right)\;\;\mbox{for all}\;\;y>x.
$$
\end{lem}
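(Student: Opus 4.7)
The strategy is to iterate the hypothesis $(\ref{d24})$ along the successive strict ascending ladder times of $X$ via the strong Markov property, then conclude by applying Fatou's lemma to a nonnegative supermartingale.

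First, upgrade $(\ref{d24})$ to a uniform bound for every $z\ge x$. By shift-invariance of the L\'evy process,
$$
E_z\!\left(e^{-q\tau_{z+}}g(X_{\tau_{z+}})\mathbf{1}_{\{\tau_{z+}<\infty\}}\right)=E\!\left(e^{-q\tau_{0+}}g(z+X_{\tau_{0+}})\mathbf{1}_{\{\tau_{0+}<\infty\}}\right),
$$
and the logconcavity of $g$ makes $g(z+\delta)/g(z)$ decreasing in $z$ for each $\delta\ge 0$. Thus $E_z(e^{-q\tau_{z+}}g(X_{\tau_{z+}})\mathbf{1}_{\{\tau_{z+}<\infty\}})/g(z)$ is decreasing in $z>x_0$, and $(\ref{d24})$ propagates to
$$
E_z\!\left(e^{-q\tau_{z+}}g(X_{\tau_{z+}})\mathbf{1}_{\{\tau_{z+}<\infty\}}\right)\le g(z)\quad\text{for all }z\ge x. \qquad(\star)
$$

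Second, introduce the strict ascending ladder times $\sigma_0:=0$, $\sigma_{n+1}:=\inf\{t>\sigma_n:X_t>X_{\sigma_n}\}$ (with $\inf\emptyset:=\infty$). Because $0$ is irregular for $(0,\infty)$, one has $\tau_{0+}>0$ a.s.; strong Markov at $\sigma_n$ then shows that on $\{\sigma_n<\infty\}$ the pair $(\sigma_{n+1}-\sigma_n,\,X_{\sigma_{n+1}}-X_{\sigma_n})$ is independent of $\mathcal{F}_{\sigma_n}$ with the law of $(\tau_{0+},X_{\tau_{0+}})$ under $P_0$. In particular the sequence $X_{\sigma_n}$ is strictly increasing and bounded below by $x$, while the increments $\sigma_{n+1}-\sigma_n$ are i.i.d.\ strictly positive, so $\sigma_n\to\infty$ on the event all $\sigma_n$ are finite. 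Setting $M_n:=e^{-q\sigma_n}g(X_{\sigma_n})\mathbf{1}_{\{\sigma_n<\infty\}}$, strong Markov at $\sigma_n$ combined with $(\star)$ applied at $z=X_{\sigma_n}\ge x$ gives
$$
E_x[M_{n+1}\mid\mathcal{F}_{\sigma_n}]=e^{-q\sigma_n}\mathbf{1}_{\{\sigma_n<\infty\}}\,E\!\left[e^{-q\tau_{0+}}g(z+X_{\tau_{0+}})\mathbf{1}_{\{\tau_{0+}<\infty\}}\right]\Big|_{z=X_{\sigma_n}}\le M_n,
$$
so $\{M_n\}$ is an $\{\mathcal{F}_{\sigma_n}\}$-supermartingale under $P_x$, and $E_x[M_n]\le E_x[M_0]=g(x)$ for every $n$.

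Third, identify $\tau_y$ with $\sigma_{N_y}$, where $N_y:=\inf\{n\ge 0:X_{\sigma_n}\ge y\}$. On $\{\tau_y<\infty\}$ we have $X_s<y\le X_{\tau_y}$ for every $s<\tau_y$, so $X_{\tau_y}$ is a strict running maximum and $\tau_y$ is itself a strict ascending ladder time; combined with the non-accumulation of $\{\sigma_n\}$ established above, this forces $\tau_y=\sigma_{N_y}$ with $N_y<\infty$. The stopped sequence $M_{n\wedge N_y}$ is still a supermartingale with $E_x[M_{n\wedge N_y}]\le g(x)$, and
$$
M_{n\wedge N_y}\longrightarrow e^{-q\tau_y}g(X_{\tau_y})\mathbf{1}_{\{\tau_y<\infty\}}\quad(n\to\infty)
$$
(the limit being $0$ on $\{\tau_y=\infty\}$). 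Fatou's lemma then yields the desired bound $E_x(e^{-q\tau_y}g(X_{\tau_y})\mathbf{1}_{\{\tau_y<\infty\}})\le g(x)$.

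The main obstacle I anticipate is the precise justification that $\tau_y=\sigma_{N_y}$ on $\{\tau_y<\infty\}$. This is exactly where the hypothesis that $0$ is irregular for $(0,\infty)$ enters essentially: it makes the ladder increments $\sigma_{n+1}-\sigma_n$ i.i.d.\ strictly positive (hence $\sigma_n$ diverges), so the discrete skeleton $\{\sigma_n\}$ does not accumulate and captures every first-passage event. Without irregularity the recursion would collapse ($\sigma_{n+1}=\sigma_n$) and the entire iteration would break down.
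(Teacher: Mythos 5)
Your proof is correct and takes essentially the same route as the paper: upgrade the hypothesis to a uniform bound via logconcavity, iterate it along the strict ascending ladder sequence (which the irregularity hypothesis makes discrete and nonaccumulating), and conclude by Fatou. The paper's version phrases the iteration with the stopped sequence $L_n=\min\{J_n,\tau_y\}$ (starting from $J_0=\tau_{x+}$ rather than $\sigma_0=0$) and invokes the dichotomy $\overline{\lim}_{t\to\infty}X_t=+\infty$ a.s.\ or $\lim_{t\to\infty}X_t=-\infty$ a.s.\ to see $L_n=\tau_y$ eventually, whereas you argue that the running maximum is a pure step process so $\tau_y=\sigma_{N_y}$ directly; these are cosmetically different but logically equivalent.
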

\begin{proof}
The following proof is similar to those of Lemmas \ref{lc} and \ref{ld}. Note that
$$
\frac{E_z\left(e^{-q\tau_{z+}}g(X_{\tau_{z+}})\mathbf{1}_{\left\{\tau_{z+}<\infty\right\}}\right)}{g(z)}=\frac{E\left(e^{-q\tau_{0+}}g(z+X_{\tau_{0+}})\mathbf{1}_{\left\{\tau_{0+}<\infty\right\}}\right)}{g(z)}
$$
is decreasing in $z\in(x_0,\infty)$, implying by (\ref{d24}) that
\begin{equation}\label{d25}
E_z\left(e^{-q\tau_{z+}}g(X_{\tau_{z+}})\mathbf{1}_{\left\{\tau_{z+}<\infty\right\}}\right)\le g(z)\;\;\mbox{for}\;\;z>x.
\end{equation}
Let $J_0=\tau_{x+}$ and for $n\ge1$,
$$
J_n=
\begin{cases}
\inf\{t>J_{n-1}:X_t>X_{J_{n-1}}\},&\;\;\mbox{if}\;\;J_{n-1}<\infty;\\
\infty,&\;\;\mbox{otherwise.}
\end{cases}
$$
Note that $\mathcal{L}(J_{n+1}-J_n\mid X_0=x,J_n<\infty)=\mathcal{L}(\tau_{0+}\mid X_0=0)$ and that $P(\tau_{0+}>0)=1$. It follows that $J_n\to\infty$ a.s. For (fixed) $y>x$, let $L_n=\min\{J_n,\tau_y\}$. Since the L\'{e}vy process $X$ either satisfies $\overline{\lim}_{t\to\infty}X_t=+\infty$ a.s. or $\lim_{t\to\infty}X_t=-\infty$ a.s., we have $J_n=\tau_y$ for large $n$ in the former case and $J_n=\infty$ for large $n$ in the latter case. In either case, $L_n=\min\{J_n,\tau_y\}=\tau_y$ for large $n$. As a consequence,
\begin{equation*}
e^{-qL_n}g(X_{L_n})\mathbf{1}_{\{L_n<\infty\}}\to e^{-q\tau_y}g(X_{\tau_y})\mathbf{1}_{\{\tau_y<\infty\}}\;\;\mbox{a.s.},
\end{equation*}
so that by Fatou's lemma,
\begin{equation}\label{d26}
\mathop{\underline{\lim}}_{n\to\infty}E_x\left(e^{-qL_n}g(X_{L_n})\mathbf{1}_{\{L_n<\infty\}}\right)\ge E_x\left(e^{-q\tau_y}g(X_{\tau_y})\mathbf{1}_{\{\tau_y<\infty\}}\right).
\end{equation}
By (\ref{d25}), it is readily shown that $E_x\left(e^{-qL_n}g(X_{L_n})\mathbf{1}_{\{L_n<\infty\}}\right)$ is decreasing in $n$, which together with (\ref{d26}) implies that
$$
E_x\left(e^{-q\tau_y}g(X_{\tau_y})\mathbf{1}_{\{\tau_y<\infty\}}\right)\le E_x\left(e^{-qL_0}g(X_{L_0})\mathbf{1}_{\{L_0<\infty\}}\right)\le g(x).
$$
The proof is complete.
\end{proof}

Let
\begin{equation}\label{d16}
u'=\inf\left\{x\in\mathbb{R}:\frac{E_x\left(e^{-q\tau_{x+}}g(X_{\tau_{x+}})\mathbf{1}_{\{\tau_{x+}<\infty\}}\right)}{g(x)}\le1\right\}.
\end{equation}

\begin{lem}\label{g5}
Let $g:\mathbb{R}\to[0,\infty)$ be nonconstant, increasing, logconcave and right-continuous. Define $u$ and $u'$ as in $(\ref{b2})$ and $(\ref{d16})$. If $0$ is irregular for $(0,\infty)$ for $X$, then $u'=u$.
\end{lem}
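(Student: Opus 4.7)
The plan is to establish the two inequalities $u' \le u$ and $u \le u'$ separately, with the irregularity hypothesis entering only in the second direction through Lemma \ref{g4}. As a preliminary observation, both $u$ and $u'$ are $\ge x_0$: for $x<x_0$ we have $g(x)=0$ and the convention $a/0:=\infty$ makes the defining ratio exceed $1$, so all arguments can be restricted to $x>x_0$.

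The direction $u'\le u$ does not require irregularity. If $u=\infty$ the statement is vacuous, so assume $u<\infty$ and fix $x>u$ with $x>x_0$. By Theorem \ref{t3.1}(i), $\tau_u$ is optimal, and combining this with Lemma \ref{l3.3} gives $V(x)=g(x)$. Since $\tau_{x+}\in\mathcal{M}$ is a valid stopping time,
$$
g(x)=V(x)\ge E_x\left(e^{-q\tau_{x+}}g(X_{\tau_{x+}})\mathbf{1}_{\{\tau_{x+}<\infty\}}\right),
$$
so the ratio defining $u'$ at $x$ is $\le 1$, yielding $x\ge u'$. Letting $x\downarrow u$ gives $u'\le u$.

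For the direction $u\le u'$, take any $x>u'$ with $x>x_0$. By the shift identity and the logconcavity of $g$ (the same monotonicity argument as in (\ref{d25})), the ratio $E_x(e^{-q\tau_{x+}}g(X_{\tau_{x+}})\mathbf{1}_{\{\tau_{x+}<\infty\}})/g(x)$ is decreasing in $x$, so the ratio is $\le 1$ at such $x$. Now Lemma \ref{g4} applies --- this is precisely where the irregularity of $0$ for $(0,\infty)$ is used --- and yields
$$
g(x)\ge E_x\left(e^{-q\tau_y}g(X_{\tau_y})\mathbf{1}_{\{\tau_y<\infty\}}\right)\quad\text{for every}\;\;y>x.
$$
Suppose for contradiction that $x<u$. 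If $u<\infty$, take $y=u$: the right-hand side equals $V(x)$ by Theorem \ref{t3.1}(i), and $V(x)>g(x)$ for $x<u$ by the remark following the proof of Theorem \ref{t3.1} (equivalently Lemma \ref{l3.2}(ii) in its extended form), giving $g(x)\ge V(x)>g(x)$, a contradiction. If $u=\infty$, let $y\to\infty$: by monotonicity of $E_x(e^{-q\tau_y}g(X_{\tau_y})\mathbf{1}_{\{\tau_y<\infty\}})$ in $y$, the limit is $\sup_y E_x(e^{-q\tau_y}g(X_{\tau_y})\mathbf{1}_{\{\tau_y<\infty\}})=e^{\beta x}W$; when $W<\infty$ this equals $V(x)>g(x)$ by (\ref{d5}), and when $W=\infty$ we obtain $g(x)\ge\infty$ directly, either way a contradiction with $g(x)<\infty$. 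Hence $x\ge u$, establishing $u\le u'$.

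The main obstacle is the invocation of Lemma \ref{g4}: without irregularity of $0$ for $(0,\infty)$, the local inequality at $x$ (involving $\tau_{x+}$) does not propagate to the global inequality against all threshold times $\tau_y$, and the equivalence $u=u'$ can genuinely fail. The second delicate point is the case $u=\infty$, which is handled by the explicit representation $V(x)=e^{\beta x}W$ from Theorem \ref{t3.1}(ii) together with (\ref{d5}); this ensures that even when no optimal threshold exists in the continuous-time problem, the threshold $u'$ defined through $\tau_{x+}$ is also infinite.
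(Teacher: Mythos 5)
Your proof is correct and takes essentially the same approach as the paper's: $u'\le u$ from $V(x)=g(x)$ for $x\ge u$ (when $u<\infty$) and the fact that $\tau_{x+}$ is a valid stopping time, and $u\le u'$ by contradiction via Lemma \ref{g4} and the strict inequality $V(x)>g(x)$ for $x<u$. The explicit case distinction on $W<\infty$ versus $W=\infty$ is a harmless elaboration of what the paper's one-line invocation of Theorem \ref{t3.1}(ii) handles implicitly.
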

\begin{proof}
We first show that $u\ge u'$. It suffices to consider the case $u<\infty$. By Theorem \ref{t3.1}, for $x\ge u$,
$$
E_x\left(e^{-q\tau_{x+}}g(X_{\tau_{x+}})\mathbf{1}_{\{\tau_{x+}<\infty\}}\right)\le V(x)=g(x),
$$
so that $u'\le u$ by (\ref{d16}). To show $u\le u'$, suppose to the contrary that $u>u'$. Let $x$ be such that $u>x>u'(\ge x_0)$. If $u<\infty$, it follows from (\ref{d16}) and Lemma \ref{g4} that
\begin{align*}
g(x)&\ge E_x\left(e^{-q\tau_{u}}g(X_{\tau_{u}})\mathbf{1}_{\{\tau_{u}<\infty\}}\right)\\
&=V(x)\;\;\mbox{(by Theorem \ref{t3.1}(i))}\\
&>g(x)\;\;\mbox{(since $x<u$)},
\end{align*}
a contradiction. If $u=\infty$, it follows from (\ref{d16}) and Lemma \ref{g4} that
$$
g(x)\ge E_x\left(e^{-q\tau_{y}}g(X_{\tau_{y}})\mathbf{1}_{\{\tau_{y}<\infty\}}\right)\;\;\mbox{for all}\;\;y>x.
$$
By Theorem \ref{t3.1}(ii),
\begin{align*}
g(x)&\ge \lim_{y\to\infty}E_x\left(e^{-q\tau_{y}}g(X_{\tau_{y}})\mathbf{1}_{\{\tau_{y}<\infty\}}\right)\\
&=V(x)>g(x),
\end{align*}
a contradiction. This proves $u\le u'$ and completes the proof.
\end{proof}

\begin{proof}[\bf Proof of Theorem \ref{g3}]
(i) We have
\begin{align}
&E_u\left(e^{-q\tau_{u+}}g(X_{\tau_{u+}})\mathbf{1}_{\{\tau_{u+}<\infty\}}\right)\le V(u)=g(u),\label{a2}\\
&E_x\left(e^{-q\tau_{x+}}g(X_{\tau_{x+}})\mathbf{1}_{\{\tau_{x+}<\infty\}}\right)>g(x)\;\;\mbox{for}\;\;x_0<x<u,\label{a3}
\end{align}
where (\ref{a3}) is due to the definition of $u'$ and $u'=u$ (by Lemma \ref{g5}). Since both $g(x)$ and
$$
E_x\left(e^{-q\tau_{x+}}g(X_{\tau_{x+}})\mathbf{1}_{\{\tau_{x+}<\infty\}}\right)=E\left(e^{-q\tau_{0+}}g(x+X_{\tau_{0+}})\mathbf{1}_{\{\tau_{0+}<\infty\}}\right)
$$
are continuously increasing in $x>x_0$, (\ref{a2}) and (\ref{a3}) together imply that
\begin{equation}\label{a4}
g(u)=E_u\left(e^{-q\tau_{u+}}g(X_{\tau_{u+}})\mathbf{1}_{\{\tau_{u+}<\infty\}}\right)=E\left(e^{-q\tau_{0+}}g(u+X_{\tau_{0+}})\mathbf{1}_{\{\tau_{0+}<\infty\}}\right),
\end{equation}
which in turn implies that both $\tau_u$ and $\tau_{u+}$ are optimal stopping times.

For $x<u$,
\begin{align*}
V(x)&=E_x\left(e^{-q\tau_{u+}}g(X_{\tau_{u+}})\mathbf{1}_{\{\tau_{u+}<\infty\}}\right)\\
&=E_x\left(E_x\left(e^{-q\tau_{u+}}g(X_{\tau_{u+}})\mathbf{1}_{\{\tau_{u+}<\infty\}}\mid\mathcal{F}_{\tau_{x+}}\right)\right).
\end{align*}
On $\{\tau_{x+}<\infty\}$, since $X_{s+\tau_{x+}}-X_{\tau_{x+}}$ is independent of $\mathcal{F}_{\tau_{x+}}$ and has the same law as $X_s-X_0$ and since $\tau_{u+}$ is optimal, we have
$$
E_x\left(e^{-q(\tau_{u+}-\tau_{x+})}g(X_{\tau_{u+}})\mathbf{1}_{\{\tau_{u+}<\infty\}}\mid\mathcal{F}_{\tau_{x+}}\right)=V(X_{\tau_{x+}})\;\;\mbox{a.s.}
$$
It follows that
$$
V(x)=E_x\left(e^{-q\tau_{x+}}V(X_{\tau_{x+}})\mathbf{1}_{\{\tau_{x+}<\infty\}}\right).
$$
Taking $x=u-\varepsilon$ for $\varepsilon>0$ yields
\begin{align}
V(u-\varepsilon)&=E_{u-\varepsilon}\left(e^{-q\tau_{(u-\varepsilon)+}}V(X_{\tau_{(u-\varepsilon)+}})\mathbf{1}_{\{\tau_{(u-\varepsilon)+}<\infty\}}\right)\notag\\
&=E\left(e^{-q\tau_{0+}}V(u-\varepsilon+X_{\tau_{0+}})\mathbf{1}_{\{\tau_{0+}<\infty\}}\right).\label{a5}
\end{align}
By (\ref{a4}) and (\ref{a5}),
\begin{align*}
\frac{V(u)-V(u-\varepsilon)}{\varepsilon}&=\frac{1}{\varepsilon}E\left[e^{-q\tau_{0+}}\left(g(u+X_{\tau_{0+}})-V(u-\varepsilon+X_{\tau_{0+}})\right)\mathbf{1}_{\{\tau_{0+}<\infty\}}\right]\\
&=A(\varepsilon)-B(\varepsilon),
\end{align*}
where
\begin{align*}
A(\varepsilon)&=\frac{1}{\varepsilon}E\left[e^{-q\tau_{0+}}\left(g(u+X_{\tau_{0+}})-g(u-\varepsilon+X_{\tau_{0+}})\right)\mathbf{1}_{\{\tau_{0+}<\infty\}}\right],\\
B(\varepsilon)&=\frac{1}{\varepsilon}E\left[e^{-q\tau_{0+}}\left(V(u-\varepsilon+X_{\tau_{0+}})-g(u-\varepsilon+X_{\tau_{0+}})\right)\mathbf{1}_{\{\tau_{0+}<\infty\}}\right].
\end{align*}

To show
\begin{equation}\label{a6}
\lim_{\varepsilon\downarrow0}A(\varepsilon)=E\left(e^{-q\tau_{0+}}g'((u+X_{\tau_{0+}})-)\mathbf{1}_{\{\tau_{0+}<\infty\}}\right),
\end{equation}
we have
\begin{align}
g(u+X_{\tau_{0+}})-g(u-\varepsilon+X_{\tau_{0+}})&=g(u-\varepsilon+X_{\tau_{0+}})\left[e^{h(u+X_{\tau_{0+}})-h(u-\varepsilon+X_{\tau_{0+}})}-1\right]\notag\\
&\le g(u-\varepsilon+X_{\tau_{0+}})\left[e^{\varepsilon h'((u-\varepsilon+X_{\tau_{0+}})+)}-1\right]\label{a8}\\
&=g(u-\varepsilon+X_{\tau_{0+}})e^{\theta\varepsilon h'((u-\varepsilon+X_{\tau_{0+}})+)}\varepsilon h'((u-\varepsilon+X_{\tau_{0+}})+)\notag
\end{align}
for some $\theta\in(0,1)$, where the inequality is due to the concavity of $h(x)$ and the last equality follows from the mean value theorem applied to the function $e^x$. So, for any (fixed) $v\in(x_0,u)$ and for $0<\varepsilon<u-v$, on $\{\tau_{0+}<\infty\}$,
\begin{align*}
\frac{1}{\varepsilon}\left[g(u+X_{\tau_{0+}})-g(u-\varepsilon+X_{\tau_{0+}})\right]&\le g(u-\varepsilon+X_{\tau_{0+}})e^{\theta\varepsilon h'((u-\varepsilon+X_{\tau_{0+}})+)}h'((u-\varepsilon+X_{\tau_{0+}})+)\\
&\le g(u+X_{\tau_{0+}})e^{\varepsilon h'(v+)}h'((u-\varepsilon+X_{\tau_{0+}})+)\\
&\le g(u+X_{\tau_{0+}})e^{(u-v)h'(v+)}h'(v+).
\end{align*}
Since on $\{\tau_{0+}<\infty\}$
$$
\lim_{\varepsilon\downarrow0}g(u+X_{\tau_{0+}})e^{\varepsilon h'(v+)}h'((u-\varepsilon+X_{\tau_{0+}})+)=g(u+X_{\tau_{0+}})h'((u+X_{\tau_{0+}})-)\;\;\mbox{a.s.},
$$
we have
\begin{align}
\overline{\lim_{\varepsilon\downarrow0}}A(\varepsilon)&\le\lim_{\varepsilon\downarrow0}E\left[e^{-q\tau_{0+}}g(u+X_{\tau_{0+}})e^{\varepsilon h'(v+)}h'((u-\varepsilon+X_{\tau_{0+}})+)\mathbf{1}_{\{\tau_{0+}<\infty\}}\right]\notag\\
&=E\left[e^{-q\tau_{0+}}g(u+X_{\tau_{0+}})h'((u+X_{\tau_{0+}})-)\mathbf{1}_{\{\tau_{0+}<\infty\}}\right]\notag\\
&=E\left(e^{-q\tau_{0+}}g'((u+X_{\tau_{0+}})-)\mathbf{1}_{\{\tau_{0+}<\infty\}}\right),\label{a7}
\end{align}
by the dominated convergence theorem together with the fact (\textit{cf.} (\ref{a4})) that
$$
E\left(e^{-q\tau_{0+}}g(u+X_{\tau_{0+}})\mathbf{1}_{\{\tau_{0+}<\infty\}}\right)=g(u)<\infty.
$$
Instead of the upper bound in (\ref{a8}), we can use the lower bound
$$
g(u+X_{\tau_{0+}})-g(u-\varepsilon+X_{\tau_{0+}})\ge g(u-\varepsilon+X_{\tau_{0+}})\left[e^{\varepsilon h'((u+X_{\tau_{0+}})-)}-1\right]
$$
to derive in a similar way
$$
\mathop{\underline{\lim}}_{\varepsilon\downarrow0}A(\varepsilon)\ge E\left(e^{-q\tau_{0+}}g'((u+X_{\tau_{0+}})-)\mathbf{1}_{\{\tau_{0+}<\infty\}}\right),
$$
which together with (\ref{a7}) establishes (\ref{a6}).

To show $\lim_{\varepsilon\downarrow0}B(\varepsilon)=0$, note that $V(x)=g(x)$ for $x\ge u$ and $g(x)<V(x)\le g(u)$ for $x<u$. So
\begin{align*}
B(\varepsilon)&=\frac{1}{\varepsilon}E\left[e^{-q\tau_{0+}}\left(V(u-\varepsilon+X_{\tau_{0+}})-g(u-\varepsilon+X_{\tau_{0+}})\right)\mathbf{1}_{\{\tau_{0+}<\infty,\;X_{\tau_{0+}}<\varepsilon\}}\right]\\
&\le\frac{1}{\varepsilon}E\left[\left(g(u)-g(u-\varepsilon)\right)\mathbf{1}_{\{\tau_{0+}<\infty,\;X_{\tau_{0+}}<\varepsilon\}}\right]\\
&=\frac{1}{\varepsilon}\left(g(u)-g(u-\varepsilon)\right)P(\tau_{0+}<\infty,X_{\tau_{0+}}<\varepsilon),
\end{align*}
implying that
\begin{equation}\label{a9}
\overline{\lim_{\varepsilon\downarrow0}}B(\varepsilon)\le g'(u-)P(\tau_{0+}<\infty,X_{\tau_{0+}}=0)=0,
\end{equation}
(since $0$ is irregular for $(0,\infty)$ and $X$ does not creep upwards). Combining (\ref{a6}) and (\ref{a9}) yields
$$
V'(u-)=\lim_{\varepsilon\downarrow0}\frac{V(u)-V(u-\varepsilon)}{\varepsilon}=E\left(e^{-q\tau_{0+}}g'((u+X_{\tau_{0+}})-)\mathbf{1}_{\{\tau_{0+}<\infty\}}\right).
$$

(ii) We have by part (i)
\begin{align*}
V'(u-)&=E\left(e^{-q\tau_{0+}}g'((u+X_{\tau_{0+}})-)\mathbf{1}_{\{\tau_{0+}<\infty\}}\right)\\
&=E\left(e^{-q\tau_{0+}}h'((u+X_{\tau_{0+}})-)g(u+X_{\tau_{0+}})\mathbf{1}_{\{\tau_{0+}<\infty\}}\right)\\
&\le E\left(e^{-q\tau_{0+}}h'(u+)g(u+X_{\tau_{0+}})\mathbf{1}_{\{\tau_{0+}<\infty\}}\right)\\
&=h'(u+)g(u)\;\;(\mbox{by (\ref{a4})})\\
&=g'(u+)=V'(u+),
\end{align*}
where the inequality is due to the concavity of $h(x)$. This inequality is an equality if and only if
$$
P(h'((u+X_{\tau_{0+}})-)=h'(u+)\mid\tau_{0+}<\infty)=1,
$$
which is equivalent to $h'((u+\zeta)-)=h'(u+)$.

(iii) Condition (\ref{a1}) implies that $h'(x-)=h'(x+)=h'(u+)$ for $u<x<u+\zeta$, which in turn implies that
\begin{equation}\label{a10}
g(x)=g(u)e^{h'(u+)(x-u)}\;\;\mbox{for}\;\;u\le x\le u+\zeta.
\end{equation}
By (\ref{a4}) and (\ref{a10}),
\begin{align}
g(u)&=E\left(e^{-q\tau_{0+}}g(u+X_{\tau_{0+}})\mathbf{1}_{\{\tau_{0+}<\infty\}}\right)\notag\\
&=E\left(e^{-q\tau_{0+}}g(u)e^{h'(u+)X_{\tau_{0+}}}\mathbf{1}_{\{\tau_{0+}<\infty\}}\right)\notag\\
&=g(u)E\left(e^{-q\tau_{0+}}e^{h'(u+)X_{\tau_{0+}}}\mathbf{1}_{\{\tau_{0+}<\infty\}}\right).\label{a11}
\end{align}
Let $\widetilde{V}(x)=g(u)e^{h'(u+)(x-u)}$ for $x\in\mathbb{R}$, so that
\begin{equation}\label{a12}
\widetilde{V}(x)=g(x)\;\;\mbox{for}\;\;u\le x\le u+\zeta.
\end{equation}
Note that for all $y\in\mathbb{R}$,
\begin{align}
E_y\left(e^{-q\tau_{y+}}\widetilde{V}(X_{\tau_{y+}})\mathbf{1}_{\{\tau_{y+}<\infty\}}\right)&=E\left(e^{-q\tau_{0+}}\widetilde{V}(y+X_{\tau_{0+}})\mathbf{1}_{\{\tau_{0+}<\infty\}}\right)\notag\\
&=g(u)e^{h'(u+)(y-u)}E\left(e^{-q\tau_{0+}}e^{h'(u+)X_{\tau_{0+}}}\mathbf{1}_{\{\tau_{0+}<\infty\}}\right)\notag\\
&=g(u)e^{h'(u+)(y-u)}\notag\\
&=\widetilde{V}(y),\label{a13}
\end{align}
where the second-to-last equality follows from (\ref{a11}). Fix an (arbitrary) $x<u$. We want to show
\begin{equation}\label{a14}
V(x)=\widetilde{V}(x)=g(u)e^{h'(u+)(x-u)}.
\end{equation}
Let $J_0=\tau_{x+}$ and for $n\ge1$,
$$
J_n=
\begin{cases}
\inf\{t>J_{n-1}:X_t>X_{J_{n-1}}\},&\;\;\mbox{if}\;\;J_{n-1}<\infty;\\
\infty,&\;\;\mbox{otherwise.}
\end{cases}
$$
Let $L_n=\min\{J_n,\tau_{u+}\}$. Note if $L_n=J_n<\tau_{u+}$, then $J_{n+1}\le\tau_{u+}$ so that $L_{n+1}=J_{n+1}\le\tau_{u+}$. Following the argument in the proof of Lemma \ref{g4}, we have $L_n=\tau_{u+}$ for large $n$, so that
\begin{equation}\label{a15}
\lim_{n\to\infty}e^{-qL_n}\widetilde{V}(X_{L_n})\mathbf{1}_{\{L_n<\infty\}}=e^{-q\tau_{u+}}\widetilde{V}(X_{\tau_{u+}})\mathbf{1}_{\{\tau_{u+}<\infty\}}\;\;\mbox{a.s.}
\end{equation}
Note that $X_{L_n}\le u$ if $L_n<\tau_{u+}$, implying that
$$
e^{-qL_n}\widetilde{V}(X_{L_n})\mathbf{1}_{\{L_n<\infty\}}\le \widetilde{V}(u)+e^{-q\tau_{u+}}\widetilde{V}(X_{\tau_{u+}})\mathbf{1}_{\{\tau_{u+}<\infty\}}.
$$
By (\ref{a15}) and the dominated convergence theorem,
\begin{align}
\lim_{n\to\infty}E_x\left(e^{-qL_n}\widetilde{V}(X_{L_n})\mathbf{1}_{\{L_n<\infty\}}\right)&=E_x\left(e^{-q\tau_{u+}}\widetilde{V}(X_{\tau_{u+}})\mathbf{1}_{\{\tau_{u+}<\infty\}}\right)\notag\\
&=E_x\left(e^{-q\tau_{u+}}g(X_{\tau_{u+}})\mathbf{1}_{\{\tau_{u+}<\infty\}}\right)\;\;\mbox{(by (\ref{a12}))}\notag\\
&=V(x),\label{a16}
\end{align}
where the last equality is due to the optimality of $\tau_{u+}$. On the other hand, it follows from (\ref{a13}) that
$$
E_x\left(e^{-qL_{n+1}}\widetilde{V}(X_{L_{n+1}})\mathbf{1}_{\{L_{n+1}<\infty\}}\right)=E_x\left(e^{-qL_n}\widetilde{V}(X_{L_n})\mathbf{1}_{\{L_n<\infty\}}\right).
$$
We have by (\ref{a16})
\begin{align*}
V(x)&=\lim_{n\to\infty}E_x\left(e^{-qL_n}\widetilde{V}(X_{L_n})\mathbf{1}_{\{L_n<\infty\}}\right)\\
&=E_x\left(e^{-qL_0}\widetilde{V}(X_{L_0})\mathbf{1}_{\{L_0<\infty\}}\right)\\
&=E_x\left(e^{-q\tau_{x+}}\widetilde{V}(X_{\tau_{x+}})\mathbf{1}_{\{\tau_{x+}<\infty\}}\right)\\
&=\widetilde{V}(x)\;\;\mbox{(by (\ref{a13}))},
\end{align*}
establishing (\ref{a14}). The proof is complete.
\end{proof}

\begin{rem}
Theorems \ref{g6} and \ref{g3} assume that $x_0<u<\infty$, which makes it unnecessary to require the right-continuity of $g$ at $x_0$. Let $g(x)\ge0$ be increasing and logconcave. We say the $g(x)$ is degenerate if $\log g(x)$ is linear for $x$ in some interval. Note that $g$ is degenerate if condition $(\ref{a1})$ holds. For a nondegenerate $g(x)$, $q\ge0$ and L\'{e}vy process $X$, suppose the optimal threshold $u$ $($given in $(\ref{b2}))$ satisfies $x_0<u<\infty$ and $g$ is differentiable at $u$. Then by Theorems \ref{g6} and \ref{g3}, the principle of smooth fit holds if and only if $0$ is regular for $(0,\infty)$ for $X$.
\end{rem}

\begin{rem}
If $g$ is degenerate, the principle of smooth fit may hold even when $0$ is irregular for $(0,\infty)$ for $X$. As an example, consider the L\'{e}vy process $X_t=-t+N_t$, $t\ge0$, where $N_t$ is a Poisson process with rate $\mu>0$. Then $0$ is irregular for $(0,\infty)$. Define $\phi(\lambda):=E(e^{\lambda X_1})=e^{\mu e^{\lambda}-\lambda-\mu}$ for $\lambda\ge0$, which is convex. Let
$$
\lambda'=\sup\{\lambda\ge0:\phi(\lambda)=e^{q}\},
$$
which is positive for $q>0$. For $q=0$, we assume $\mu<1$ so that $E(X_1)<0$ and $\lambda'>0$. Since $E(e^{-q+\lambda'X_1})=1$, we have $E(e^{-qt+\lambda'X_t})=\left[E(e^{-q+\lambda'X_1})\right]^{t}=1$ for $t>0$, so that $\{e^{-qt+\lambda'X_t}\}_{t\ge0}$ is a martingale. Consequently,
\begin{equation}\label{d21}
E\left(e^{-q\tau_{0+}+\lambda'X_{\tau_{0+}}}\mathbf{1}_{\{\tau_{0+}<\infty\}}\right)=1.
\end{equation}
Note that $\zeta=\inf\{x:P(X_{\tau_{0+}}>x\mid\tau_{0+}<\infty)=0\}=1$. Let $g(x)=e^{h(x)}$ where
$$
h(x)=
\begin{cases}
\lambda'x-x^2,&\;\;\mbox{for}\;\;x\le0,\\
\lambda'x,&\;\;\mbox{for}\;\;0<x\le1,\\
\lambda'(2-x^{-1}),&\;\;\mbox{for}\;\;x>1,
\end{cases}
$$
which is increasing, concave and continuously differentiable. We have by $(\ref{d21})$
\begin{equation}\label{a17}
E\left(e^{-q\tau_{0+}}g(X_{\tau_{0+}})\mathbf{1}_{\{\tau_{0+}<\infty\}}\right)=E\left(e^{-q\tau_{0+}+\lambda'X_{\tau_{0+}}}\mathbf{1}_{\{\tau_{0+}<\infty\}}\right)=1=g(0),
\end{equation}
and for $x<0$,
\begin{align}
\frac{E_x\left(e^{-q\tau_{x+}}g(X_{\tau_{x+}})\mathbf{1}_{\{\tau_{x+}<\infty\}}\right)}{g(x)}&=\frac{E\left(e^{-q\tau_{0+}}g(x+X_{\tau_{0+}})\mathbf{1}_{\{\tau_{0+}<\infty\}}\right)}{e^{\lambda'x-x^2}}\notag\\
&=\frac{E\left[\mathrm{exp}\left(-q\tau_{0+}+\lambda'(x+X_{\tau_{0+}})-(\min\{x+X_{\tau_{0+}},0\})^2\right)\mathbf{1}_{\{\tau_{0+}<\infty\}}\right]}{e^{\lambda'x-x^2}}\notag\\
&=E\left[\mathrm{exp}\left(-q\tau_{0+}+\lambda'X_{\tau_{0+}}+x^2-(\min\{x+X_{\tau_{0+}},0\})^2\right)\mathbf{1}_{\{\tau_{0+}<\infty\}}\right]\notag\\
&>E\left[\mathrm{exp}(-q\tau_{0+}+\lambda'X_{\tau_{0+}})\mathbf{1}_{\{\tau_{0+}<\infty\}}\right]=1,\label{a18}
\end{align}
where the inequality follows from the fact that
$$
P(x^2-(\min\{x+X_{\tau_{0+}},0\})^2>0\mid\tau_{0+}<\infty)=P(X_{\tau_{0+}}>0\mid\tau_{0+}<\infty)=1\;\;\mbox{for}\;\;x<0.
$$
If follows from $(\ref{d16})$, $(\ref{a17})$--$(\ref{a18})$ and Lemma \ref{g5} that $u=u'=0$. By Theorem \ref{g3}{\upshape(iii)} $($or direct calculations$)$, we have $V(x)=e^{\lambda'x}$ for $x<0$. More generally, for any increasing and logconcave function $g(x)$ satisfying $0\le g(x)<e^{\lambda'x}$ for $x<0$, $g(x)=e^{\lambda'x}$ for $0\le x\le1$, and $g(x)\le e^{\lambda'x}$ for $x>1$, it can be shown that $V(x)=e^{\lambda'x}$ for $x<0$ and $V(x)=g(x)$ for $x\ge0$. For example, let
$$
g(x)=e^{ax}\mathbf{1}_{(-\infty,0)}(x)+e^{\lambda'x}\mathbf{1}_{[0,1]}(x)+e^{b(x-1)+\lambda'}\mathbf{1}_{(1,\infty)}(x)
$$
with $a>\lambda'>b\ge0$, which is not differentiable at $x=0,1$. The value function is given by
$$
V(x)=e^{\lambda'x}\mathbf{1}_{(-\infty,1]}(x)+e^{b(x-1)+\lambda'}\mathbf{1}_{(1,\infty)}(x).
$$
\end{rem}

\section{Concluding remarks and conditions for $u<\infty$}
\hspace*{18pt}The optimal stopping problem (\ref{e1.1}) involves the reward function $g$ and the underlying process $\{X_t\}$ as well as the discount rate $q\ge0$. Motivated by well-known results in the literature, we explored the close connection between increasing and logconcave reward functions and optimal stopping times of threshold type. Specifically in this paper, $g$ is assumed to be nonnegative, nonconstant, increasing, logconcave and right-continuous while $\{X_t\}$ is either a random walk in discrete time or a L\'{e}vy process in continuous time. We showed that there exists a unique threshold $u\in[-\infty,\infty]$ such that
\begin{itemize}
\item[(i)] the value function $V(x)>g(x)$ for $x<u$ and $V(x)=g(x)$ for $x\ge u$;
\item[(ii)] if $-\infty\le u<\infty$, then $\tau_u=\inf\{t\ge0:X_t\ge u\}$ is optimal;
\item[(iii)] if $u=\infty$, the stopping region $\{x:V(x)=g(x)\}=[u,\infty)=\emptyset$ and it is never optimal to stop since more profit can always be made by stopping at a later time. (However, to never stop yields a zero reward.)
\end{itemize}

The work of Alili and Kyprianou \cite{ref18} makes use of a fluctuation identity to give insight into the importance of the role played by the regularity of the paths of $\{X_t\}$ in the solution for the American put optimal stopping problem. Building on it, we investigated the principle of smooth fit more generally when $g$ is increasing and logconcave. We obtained necessary and sufficient conditions for the smooth fit principle to hold.

Finally we discuss conditions for the threshold $u<\infty$, which is given in (\ref{g1}) and (\ref{b2}) for the discrete- and continuous-time cases, respectively. A simple sufficient condition for $u<\infty$ is
\begin{equation}\label{c3}
E\left[e^{(\beta+\delta)X_1}\right]\le e^{q}\;\;\mbox{for some}\;\;\delta>0,
\end{equation}
where $\beta$ is given in (\ref{e1}). Note that (\ref{c3}) implies that $E(e^{\lambda X_1})<\infty$ is convex in $\lambda\in[0,\beta+\delta]$, so that
\begin{equation}\label{c4}
E(e^{\lambda X_1})\le\max\left\{E\left[e^{(\beta+\delta)X_1}\right],1\right\}\le e^{q},\;\;\mbox{for}\;\;\lambda\in[0,\beta+\delta].
\end{equation}
Under condition (\ref{c3}), we claim that $u\le x$ for any $x$ with $c:=h'(x-)\le\beta+\delta$ where $h(x)=\log g(x)$. To see this, let $\tilde{h}(y):=h(x)+c(y-x)\ge h(y)$ for $y\in\mathbb{R}$, so that $g(y)\le\tilde{g}(y):=e^{\tilde{h}(y)}$ for $y\in\mathbb{R}$. Then in the discrete-time case,
\begin{align}
E_x\left(e^{-qT_x}g(X_{T_x})\mathbf{1}_{\{T_x<\infty\}}\right)&\le E_x\left(e^{-qT_x}\tilde{g}(X_{T_x})\mathbf{1}_{\{T_x<\infty\}}\right)\notag\\
&=E_x\left[e^{-qT_x}e^{h(x)+c(X_{T_x}-x)}\mathbf{1}_{\{T_x<\infty\}}\right]\notag\\
&=g(x)E_x\left[e^{-qT_x+c(X_{T_x}-x)}\mathbf{1}_{\{T_x<\infty\}}\right]\notag\\
&=g(x)E\left(e^{-qT_0+cX_{T_0}}\mathbf{1}_{\{T_0<\infty\}}\right)\le g(x),\label{c5}
\end{align}
where the last inequality is due to the fact that $\left\{e^{-qt+cX_t}:t=0,1,\dots\right\}$ is a supermartingale. (Note that by (\ref{c4}), $E(e^{-q+cX_1})\le1$ with $0\le c=h'(x-)\le\beta+\delta$.) We have by (\ref{g1}) and (\ref{c5}) that $u\le x$. In the continuous-time case, we have by (\ref{c4}) that for $t=2^{-\ell}$, $\ell=1,2,\dots$,
$$
E\left(e^{-qt+\lambda X_t}\right)=\left[E\left(e^{-q+\lambda X_1}\right)\right]^{t}\le1,\;\;\mbox{for}\;\;\lambda\in[0,\beta+\delta],
$$
which implies that $u^{(\ell)}\le x$ for all $\ell=1,2,\dots$. Thus $u=\lim_{\ell\to\infty}u^{(\ell)}\le x$. This proves that (\ref{c3}) is sufficient for $u<\infty$.

On the other hand, suppose $h'(x-)>\beta=\lim_{y\to\infty}h'(y-)$ for all $x\in\mathbb{R}$ (which implies that $h(x)$ is not linear in $x\in(a,\infty)$ for any $a\in\mathbb{R}$). If $\max\{q,\beta\}>0$, then a sufficient condition for $u=\infty$ is
\begin{equation}\label{c6}
E\left(e^{\beta X_1}\right)\ge e^{q}.
\end{equation}
To see this, note that the discrete-time process $\left\{e^{-qt+\beta X_t}:t=0,1,\dots\right\}$ is a submartingale. Since $\max\{q,\beta\}>0$, it is readily shown that
\begin{equation}\label{c9}
1\le\lim_{n\to\infty}E\left(e^{-q(T_0\wedge n)+\beta X_{T_0\wedge n}}\right)=E\left(e^{-qT_0+\beta X_{T_0}}\mathbf{1}_{\{T_0<\infty\}}\right).
\end{equation}
For any $x$ with $g(x)>0$, we claim that
\begin{equation}\label{c7}
g(x)<E_x\left(e^{-qT_x}g(X_{T_x})\mathbf{1}_{\{T_x<\infty\}}\right).
\end{equation}
Let $h^*(y)=h(x)+\beta(y-x)$ and $g^*(y)=e^{h^*(y)}=g(x)e^{\beta(y-x)}$ for $y\in\mathbb{R}$. Recall that by assumption, $h'(y-)>\beta=(h^*)'(y)$ for all $y\in\mathbb{R}$, so $g^*(x)=g(x)$ and $g^*(y)<g(y)$ for $y>x$. Since $P_x(X_{T_x}>x,T_x<\infty)\ge P(\xi>0)>0$, we have
\begin{align*}
E_x\left(e^{-qT_x}g(X_{T_x})\mathbf{1}_{\{T_x<\infty\}}\right)&>E_x\left(e^{-qT_x}g^*(X_{T_x})\mathbf{1}_{\{T_x<\infty\}}\right)\\
&=g(x)E\left(e^{-qT_0+\beta X_{T_0}}\mathbf{1}_{\{T_0<\infty\}}\right)\\
&\ge g(x)\;\;\mbox{(by (\ref{c9}))}.
\end{align*}
This proves (\ref{c7}), implying that $u=\infty$ for the discrete-time case. The continuous-time case can be treated similarly. By (\ref{c3}) and (\ref{c6}) together, we can characterize $u<\infty$ as follows. Assume again that $h'(x-)>\beta$ for all $x\in\mathbb{R}$ and that $\max\{q,\beta\}>0$. Suppose $E\left[e^{(\beta+\delta)X_1}\right]<\infty$ for some $\delta>0$. Then $E(e^{\beta X_1})<e^{q}$ implies $E\left[e^{(\beta+\delta')X_1}\right]<e^{q}$ for sufficiently small $\delta'>0$, which in turn implies $u<\infty$ by (\ref{c3}). On the other hand, by (\ref{c6}), $E(e^{\beta X_1})\ge e^{q}$ implies $u=\infty$. Thus we have $u<\infty$ if and only if $E(e^{\beta X_1})<e^{q}$ provided that $E\left[e^{(\beta+\delta)X_1}\right]<\infty$ for some $\delta>0$. When $E(e^{\beta X_1})<e^{q}$ and $E\left[e^{(\beta+\delta)X_1}\right]=\infty$ for all $\delta>0$, it is unclear how to find general conditions for $u<\infty$. In the special case $g(x)=(x^+)^{\nu}$ with $\nu>0$ (for which $\beta=0$), Novikov and Shiryaev \cite{ref9} showed that $u<\infty$ if {\em either} $q>0$, $E[(X_1^+)^{\nu}]<\infty$ {\em or} $q=0$, $E(X_1)<0$, $E[(X_1^+)^{\nu+1}]<\infty$.


\section{Proofs of Lemmas \ref{lc}--\ref{le}}

\begin{proof}[\bf Proof of Lemma \ref{lc}]
(i) For $a\in[-\infty,y)$, let $J_0=T_a$ and for $n\ge1$,
$$
J_n=
\begin{cases}
\inf\{j>J_{n-1}:X_j\ge X_{J_{n-1}}\},\;\;&\mbox{if}\;J_{n-1}<\infty;\\
\infty,\;\;&\mbox{otherwise.}
\end{cases}
$$
Here starting at $X_{T_a}=X_{J_0}$ (if $T_a<\infty$), the (finite) $J_1,J_2,\dots$ are the weak ascending ladder epochs and $X_{J_1},X_{J_2},\dots$ are the corresponding weak ascending ladder heights. Let $L_n=\min\{J_n,T_y\}$ for $n\ge0$. It is well known (\textit{cf.} Theorem 8.2.5 of Chung \cite{ref16}) that the random walk $\{X_n\}$ either satisfies $\overline{\lim}_{n\to\infty}X_n=+\infty$ a.s. or $\lim_{n\to\infty}X_n=-\infty$ a.s. If $\overline{\lim}_{n\to\infty}X_n=+\infty$ a.s., then a.s. $T_y<\infty$ and $0<J_0<J_1<J_{2}<\cdots$ are all finite, so that $L_n=\min\{J_n,T_y\}=T_y$ for large $n$. If $\lim_{n\to\infty}X_n=-\infty$ a.s., then a.s. there exists a finite $n'\ge0$ such that $J_n=\infty$ for all $n\ge n'$, implying that $L_n=\min\{J_n,T_y\}=T_y$ for all $n\ge n'$. Thus, regardless of whether $\overline{\lim}_{n\to\infty}X_n=+\infty$ a.s. or $\lim_{n\to\infty}X_n=-\infty$ a.s., we have $L_n=T_y$ for large $n$ a.s. As a consequence,
\begin{equation}\label{v10}
e^{-qL_n}g\left(X_{L_n}\right)\mathbf{1}_{\left\{L_n<\infty\right\}}\to e^{-qT_y}g(X_{T_y})\mathbf{1}_{\left\{T_y<\infty\right\}}\;\;\mbox{a.s.}
\end{equation}
It follows that
\begin{equation}\label{v11}
E_x\left(e^{-qL_n}g\left(X_{L_n}\right)\mathbf{1}_{\left\{L_n<\infty\right\}}\right)\to E_x\left(e^{-qT_y}g(X_{T_y})\mathbf{1}_{\left\{T_y<\infty\right\}}\right).
\end{equation}
More precisely, if $E_x\left(e^{-qT_y}g(X_{T_y})\mathbf{1}_{\left\{T_y<\infty\right\}}\right)=\infty$, then (\ref{v11}) follows from (\ref{v10}) and Fatou's lemma. If $E_x\left(e^{-qT_y}g(X_{T_y})\mathbf{1}_{\left\{T_y<\infty\right\}}\right)<\infty$, then (\ref{v11}) follows from (\ref{v10}) and the dominated convergence theorem upon observing
$$
e^{-qL_n}g(X_{L_n})\mathbf{1}_{\{L_n<\infty\}}\le \max\{g(y),e^{-qT_y}g(X_{T_y})\mathbf{1}_{\{T_y<\infty\}}\}.
$$

We now prove that for $n\ge1$,
\begin{equation}\label{v12}
E_x\left(e^{-qL_n}g\left(X_{L_n}\right)\mathbf{1}_{\left\{L_n<\infty\right\}}\right)\ge E_x\left(e^{-qL_{n-1}}g(X_{L_{n-1}})\mathbf{1}_{\left\{L_{n-1}<\infty\right\}}\right).
\end{equation}
Note that
\begin{align}
E_x\left(e^{-qL_n}g\left(X_{L_n}\right)\mathbf{1}_{\left\{L_n<\infty\right\}}\right)&=E_x\left(e^{-qL_{n-1}}\mathbf{1}_{\left\{L_{n-1}<\infty\right\}}e^{-q(L_n-L_{n-1})}g\left(X_{L_n}\right)\mathbf{1}_{\left\{L_n<\infty\right\}}\right)\notag\\
&=E_x\left[e^{-qL_{n-1}}\mathbf{1}_{\left\{L_{n-1}<\infty\right\}}E_x\left(e^{-q(L_n-L_{n-1})}g\left(X_{L_n}\right)\mathbf{1}_{\left\{L_n<\infty\right\}}\mid X_{L_{n-1}},L_{n-1}\right)\right]\label{e2.4}.
\end{align}
For (integer) $\ell<\infty$ and $x'<y$, we have
$$
\mathcal{L}(L_n-L_{n-1},X_{L_n}\mathbf{1}_{\{L_n<\infty\}}\mid X_{L_{n-1}}=x',L_{n-1}=\ell)=\mathcal{L}(T_{x'},X_{T_{x'}}\mathbf{1}_{\{T_{x'}<\infty\}}\mid X_0=x'),
$$
so that
\begin{align*}
\frac{E(e^{-q(L_n-L_{n-1})}g(X_{L_n})\mathbf{1}_{\{L_n<\infty\}}\mid X_{L_{n-1}}=x',L_{n-1}=\ell)}{g(x')}&=\frac{E_{x'}(e^{-qT_{x'}}g(X_{T_{x'}})\mathbf{1}_{\{T_{x'}<\infty\}})}{g(x')}\\
&\ge\frac{E_y(e^{-qT_y}g(X_{T_y})\mathbf{1}_{\{T_y<\infty\}})}{g(y)}\ge1,
\end{align*}
where the first inequality follows from (\ref{e2.18}). So on the event $A_{n,y}=\{L_{n-1}<\infty,X_{L_{n-1}}<y\}$, we have
\begin{equation}\label{e4.5}
E\left(e^{-q(L_n-L_{n-1})}g(X_{L_n})\mathbf{1}_{\left\{L_n<\infty\right\}}\mid X_{L_{n-1}},L_{n-1}\right)\ge g(X_{L_{n-1}})\mathbf{1}_{\left\{L_{n-1}<\infty\right\}}.
\end{equation}
It is easily seen that on $\Omega\backslash A_{n,y}$,
\begin{equation}\label{e4.6}
E\left(e^{-q(L_n-L_{n-1})}g(X_{L_n})\mathbf{1}_{\left\{L_n<\infty\right\}}\mid X_{L_{n-1}},L_{n-1}\right)= g(X_{L_{n-1}})\mathbf{1}_{\left\{L_{n-1}<\infty\right\}}.
\end{equation}
By (\ref{e2.4})--(\ref{e4.6}), (\ref{v12}) follows. Since $T_a=J_0=\min\{J_0,T_y\}=L_0$, we have by (\ref{v11}) and (\ref{v12})
\begin{align*}
E_x\left(e^{-qT_a}g(X_{T_a})\mathbf{1}_{\left\{T_a<\infty\right\}}\right)&=E_x\left(e^{-qL_0}g(X_{L_0})\mathbf{1}_{\left\{L_0<\infty\right\}}\right)\\
&\le E_x\left(e^{-qL_n}g(X_{L_n})\mathbf{1}_{\left\{L_n<\infty\right\}}\right)\\
&\to E_x\left(e^{-qT_y}g(X_{T_y})\mathbf{1}_{\left\{T_y<\infty\right\}}\right)\;\mbox{as}\;n\to\infty.
\end{align*}
This proves the desired inequality in Lemma \ref{lc}(i).

(ii) For $x<y$, we have by (\ref{e2.18}) that
$$
\frac{E_x\left(e^{-qT_x}g(X_{T_x})\mathbf{1}_{\left\{T_x<\infty\right\}}\right)}{g(x)}\ge \frac{E_y\left(e^{-qT_y}g(X_{T_y})\mathbf{1}_{\left\{T_y<\infty\right\}}\right)}{g(y)}\ge1,
$$
from which it follows that
\begin{align*}
g(x)&\le E_x\left(e^{-qT_x}g(X_{T_x})\mathbf{1}_{\left\{T_x<\infty\right\}}\right)\\
&\le E_x\left(e^{-qT_y}g(X_{T_y})\mathbf{1}_{\left\{T_y<\infty\right\}}\right)\;\;\mbox{(by Lemma \ref{lc}(i))}\\
&=E_x\left(e^{-q\tau_y}g(X_{\tau_y})\mathbf{1}_{\left\{\tau_y<\infty\right\}}\right).
\end{align*}
This completes the proof.
\end{proof}

\begin{proof}[\bf Proof of Lemma \ref{ld}]
Part (i) can be established along the lines of the proof of Lemma \ref{lc}(i) with minor changes, while part (ii) follows immediately from the assumption of the lemma and part (i). We leave out the details.
\end{proof}

\begin{proof}[\bf Proof of Lemma \ref{la}]
We first consider the special (trivial) case that $q=0$ and $E(\xi)\ge0$. Since $x_0\le u<\infty$, we have by (\ref{g1}) that
\begin{equation}\label{v13}
0<g(v)\ge E_{v}\left(g(X_{T_v})\mathbf{1}_{\{T_v<\infty\}}\right),\;\;\mbox{for}\;\; u<v<\infty.
\end{equation}
In view of $E(\xi)\ge0$ and $P(\xi>0)>0$, we have $T_v<\infty$ and $v\le X_{T_v}$ a.s., so that $E_v\left(g(X_{T_v})\mathbf{1}_{\{T_v<\infty\}}\right)\ge g(v)$ (since $g$ is increasing), which together with (\ref{v13}) and $P_v(v<X_{T_v})>0$ implies that $g'(v+)=0$ for $v>u$. So for all $v>u$,
$$
g(v)=g(\infty):=\lim_{y\to\infty}g(y).
$$
Since $g$ is nonconstant and right-continuous, we have $u>-\infty$ and $g(u)=g(\infty)$. Hence for all $x\in\mathbb{R}$, $P_x(X_{\tau_u}\ge u\;\mbox{and}\;\tau_u<\infty)=1$ and
$$
E_x\left(g(X_{\tau_u})\mathbf{1}_{\{\tau_u<\infty\}}\right)=g(\infty)\ge V(x),
$$
establishing the optimality of $\tau_u$.

We now deal with the general case $q\ge0$ and assume $E(\xi)<0$ if $q=0$. Let
\begin{equation}\label{e2.6}
\alpha=\alpha(q)=\sup\{\lambda\ge0:\phi(\lambda)=e^q\},
\end{equation}
which is positive for $q>0$. For $q=0$ (and $E(\xi)<0$ by assumption), we also have $\alpha>0$ since $\phi(\lambda)$ is convex, $\phi(0)=1$, $\phi'(0+)=E(\xi)<0$ and $\lim_{\lambda\to\infty}\phi(\lambda)=\infty$.

Suppose $-\infty<u<\infty$ and let $V^*(x)=E_{x}\left(e^{-q\tau_u}g(X_{\tau_u})
\mathbf{1}_{\left\{\tau_u<\infty\right\}}\right)$, $x\in\mathbb{R}$. Then $V^*(x)=g(x)$ for $x\ge u$. We need to prove $V^*(x)=V(x)$ for all $x$. Since $V(x)\ge V^*(x)$, it remains to show $V^*(x)\ge V(x)$. By Lemma \ref{lb}, it suffices to prove
\begin{equation}\label{e2.9}
V^*(x)\ge g(x)\;\;\mbox{for}\;\;x<u,
\end{equation}
and
\begin{equation}\label{e2.10}
V^*(x)\ge E(e^{-q}V^*(x+\xi))\;\;\mbox{for all}\;\;x.
\end{equation}
Recall that $u\ge x_0=\inf\{s:g(s)>0\}$, implying that $g(y)>0$ for all $y>u$ and $E_u\left(e^{-qT_u}g(X_{T_u})\mathbf{1}_{\left\{T_u<\infty\right\}}\right)>0$ (since $P_u(X_{T_u}>u,T_u<\infty)\ge P(\xi>0)>0$). By (\ref{g1}), we have
$$
\frac{E_y\left(e^{-qT_y}g(X_{T_y})\mathbf{1}_{\left\{T_y<\infty\right\}}\right)}{g(y)}\le1\;\;\mbox{for}\;\;y>u,
$$
which yields that
\begin{equation}\label{e2.20}
\frac{E_u\left(e^{-qT_u}g(X_{T_u})\mathbf{1}_{\left\{T_u<\infty\right\}}\right)}{g(u)}=\lim_{y\to u+}\frac{E_y\left(e^{-qT_y}g(X_{T_y})\mathbf{1}_{\left\{T_y<\infty\right\}}\right)}{g(y)}\le1,
\end{equation}
where the equality follows from $g(u)=g(u+)$ and
$$
E_y\left(e^{-qT_y}g(X_{T_y})\mathbf{1}_{\left\{T_y<\infty\right\}}\right)=E\left(e^{-qT_0}g(y+X_{T_0})\mathbf{1}_{\left\{T_0<\infty\right\}}\right)
$$
decreases to $E_u\left(e^{-qT_u}g(X_{T_u})\mathbf{1}_{\left\{T_u<\infty\right\}}\right)>0$ as $y\downarrow u$. Thus, $g(u)>0$. Noting that $g(X_{\tau_u})\ge g(u)$ on $\{\tau_u<\infty\}$, we have
$$
V^*(x)=E_x\left(e^{-q\tau_u}g(X_{\tau_u})
\mathbf{1}_{\left\{\tau_u<\infty\right\}}\right)\ge g(u)E_x\left(e^{-q\tau_u}
\mathbf{1}_{\left\{\tau_u<\infty\right\}}\right)>0\;\;\mbox{for all}\;\;x.
$$
(If $x<u$, then $E_x\left(e^{-q\tau_u}
\mathbf{1}_{\left\{\tau_u<\infty\right\}}\right)>0$ by the assumption that $P(\xi>0)>0$.) If $u=x_0$, then $V^*(x)>0=g(x)$ for $x<u$. If $u>x_0$, we have by (\ref{e2.17}) that $E_u\left(e^{-qT_u}g(X_{T_u})\mathbf{1}_{\{T_u<\infty\}}\right)/g(u)=1$, which together with Lemma \ref{lc}(ii) implies that $g(x)\le E_x\left(e^{-q\tau_u}g(X_{\tau_u})
\mathbf{1}_{\left\{\tau_u<\infty\right\}}\right)=V^*(x)$ for $x<u$. This proves (\ref{e2.9}).

To prove (\ref{e2.10}), we have by Lemma \ref{lg} that for $x<u$,
$$
V^*(x)=E_x\left(e^{-q\tau_u}g(X_{\tau_u})\mathbf{1}_{\{\tau_u<\infty\}}\right)=E_x\left(e^{-qT_u}g(X_{T_u})\mathbf{1}_{\{T_u<\infty\}}\right)=E(e^{-q}V^*(x+\xi)).
$$
By (\ref{e2.20}), we also have
\begin{equation}\label{v14}
\frac{E(e^{-q}V^*(u+\xi))}{V^*(u)}=\frac{E_u\left(e^{-qT_u}g(X_{T_u})\mathbf{1}_{\left\{T_u<\infty\right\}}\right)}{g(u)}\le1,
\end{equation}
implying that $V^*(u)\ge E(e^{-q}V^*(u+\xi))$. (Note that the inequality in (\ref{v14}) cannot be replaced by an equality if $u=x_0$.)

It remains to show (\ref{e2.10}) for $x>u$ $(>-\infty)$. Fix an (arbitrary) $x>u\;(\ge x_0)$ with $c:=h'(x-)$. We have $g(x)>0$ and $0\le c<\infty$. It follows from the concavity of $h$ that
$$
h(y)\le h(x)+c(y-x)\;\;\mbox{for all}\;\;y\in\mathbb{R}.
$$
For each $w\in[0,\infty)$, define a function $h_w:\mathbb{R}\to\mathbb{R}$ by
\begin{equation}\label{e3}
h_w(y)=
\begin{cases}
h(x)+c(y-x),&\mbox{if}\;\;y\le x+w;\\
h(y)+h(x)+cw-h(x+w),&\mbox{if}\;\;y>x+w.
\end{cases}
\end{equation}
It is readily seen that $h_w(y)\ge h(y)$ for all $y$, $h_w(x)=h(x)$ and $h_w(\cdot)$ is increasing, concave and continuous. For $0\le w_1<w_2$,
$$
h_{w_2}(y)-h_{w_1}(y)=
\begin{cases}
0,&\;\;\mbox{if}\;y\le x+w_1;\\
h(x+w_1)+c(y-x-w_1)-h(y),&\;\;\mbox{if}\;\;x+w_1<y\le x+w_2;\\
h(x+w_1)+c(w_2-w_1)-h(x+w_2),&\;\;\mbox{if}\;\;y>x+w_2.
\end{cases}
$$
By the concavity of $h$, we have for all $y\in\mathbb{R}$
\begin{equation}\label{v15}
0\le h_{w_2}(y)-h_{w_1}(y)\le h(x+w_1)+c(w_2-w_1)-h(x+w_2),
\end{equation}
so that $h_w(\cdot)$ is continuous and increasing in $w$. Note that $h_\infty(y):=\lim_{w\to\infty}h_w(y)=h(x)+c(y-x)$ for all $y\in\mathbb{R}$.

For $w\in[0,\infty]$ and $y\in\mathbb{R}$, let $g_w(y):=e^{h_{w}(y)}$ and
\begin{equation}\label{e4.13}
U_w(y):=E_y\left(e^{-q\tau_u}g_w(X_{\tau_u})
\mathbf{1}_{\left\{\tau_u<\infty\right\}}\right).
\end{equation}
Then $g_w(y)=e^{h_w(y)}\ge e^{h(y)}=g(y)$, so
\begin{equation}\label{e4.14}
U_w(y)\ge V^*(y)\;\;\mbox{for all}\;\;y.
\end{equation}
Note that
\begin{equation}\label{e8}
\frac{E(e^{-q}g_{\infty}(y+\xi))}{g_{\infty}(y)}=e^{-q}E(e^{h_{\infty}(y+\xi)-h_{\infty}(y)})=e^{-q}E(e^{c\xi})=e^{-q}\phi(c).
\end{equation}
If $c\le\alpha$, then we have by the convexity of $\phi$ and (\ref{e2.6}) that $e^{-q}\phi(c)\le e^{-q}\phi(\alpha)=1$, implying by (\ref{e8}) that $E(e^{-q}g_{\infty}(y+\xi))\le g_\infty(y)$ for $y\in\mathbb{R}$. Thus, $\{e^{-qn}g_{\infty}(X_n)\}_{n\ge0}$ (with $X_0=x$) is a (positive) supermartingale, so that $E_x\left(e^{-qT_u}g_{\infty}(X_{T_u})\mathbf{1}_{\left\{T_u<\infty\right\}}\right)\le g_{\infty}(x)=g(x)=V^*(x)$. Therefore,
\begin{align*}
\frac{E(e^{-q}V^*(x+\xi))}{V^*(x)}&\le\frac{E(e^{-q}U_{\infty}(x+\xi))}{g_{\infty}(x)}\;\;\mbox{(by (\ref{e4.14}))}\\
&=\frac{E_x\left(e^{-qT_u}g_{\infty}(X_{T_u})\mathbf{1}_{\left\{T_u<\infty\right\}}\right)}{g_{\infty}(x)}\;\;\mbox{(by Lemma \ref{lg} and (\ref{e4.13}))}\\
&\le1,
\end{align*}
proving (\ref{e2.10}) for $x>u>-\infty$ with $c\le\alpha$. (Recall that $c=h'(x-)$ and $\alpha$ is given in (\ref{e2.6}).)

We now show (\ref{e2.10}) for $x>u>-\infty$ with $c>\alpha>0$. For $w\in[0,\infty]$, let
\begin{equation}\label{e4.15}
f(w):=\frac{E_x\left(e^{-qT_x}g_w(X_{T_x})\mathbf{1}_{\left\{T_x<\infty\right\}}\right)}{g(x)}.
\end{equation}
Since $g_0(y)=g(y)$ for all $y\ge x$ and since $x>u$, we have by (\ref{g1}) that
$$
f(0)=\frac{E_x\left(e^{-qT_x}g_0\left(X_{T_x}\right)\mathbf{1}_{\left\{T_x<\infty\right\}}\right)}{g(x)}=\frac{E_x\left(e^{-qT_x}g\left(X_{T_x}\right)\mathbf{1}_{\left\{T_x<\infty\right\}}\right)}{g(x)}\le1.
$$
In addition, we have by (\ref{e8}) that $\frac{E(e^{-q}g_{\infty}(y+\xi))}{g_{\infty}(y)}=e^{-q}\phi(c)>e^{-q}\phi(\alpha)=1$, implying that $\{e^{-qn}g_{\infty}(X_n)\}_{n\ge0}$ (with $X_0=x$) is a submartingale, so that
\begin{equation}\label{v16}
E_x\left(e^{-qT^{n}_x}g_\infty(X_{T^{n}_x})\right)\ge g_\infty(x)=g(x)\;\;\mbox{for}\;\;n=1,2,\dots,
\end{equation}
where $T^{n}_x=\min\{T_x,n\}$. Recall that $E(\xi)<0$ if $q=0$, in which case $\lim_{n\to\infty}X_n=-\infty$ a.s. and $\lim_{n\to\infty}g_{\infty}(X_n)=0$ a.s. Since for all $n$,
$$
e^{-qT^{n}_x}g_\infty(X_{T^{n}_x})\le \max\{e^{-qn}g_\infty(x),e^{-qT_x}g_\infty(X_{T_x})\mathbf{1}_{\{T_x<\infty\}}\}\;\;\mbox{a.s.}
$$
and since as $n\to\infty$,
$$
e^{-qT^{n}_x}g_\infty(X_{T^{n}_x})\to e^{-qT_x}g_\infty(X_{T_x})\mathbf{1}_{\{T_x<\infty\}}\;\;\mbox{a.s.},
$$
we have by (\ref{e4.15}) and (\ref{v16}) that
$$
f(\infty)=\frac{E_x\left(e^{-qT_x}g_{\infty}(X_{T_x})\mathbf{1}_{\left\{T_x<\infty\right\}}\right)}{g(x)}=\frac{\lim_{n\to\infty}E_x\left(e^{-qT^{n}_x}g_\infty(X_{T^{n}_x})\right)}{g(x)}\ge1.
$$
Furthermore, by (\ref{v15}), for $0\le w_1<w_2<\infty$,
$$
1\le\frac{f(w_2)}{f(w_1)}\le e^{h(x+w_1)+c(w_2-w_1)-h(x+w_2)},
$$
implying that $f(w)$ is continuously increasing to $f(\infty)$ as $w\to\infty$. It follows from $f(0)\le 1\le f(\infty)$ that $f(w')=1$ for some $w'\in[0,\infty]$. Noting that $V^*(x)=g(x)=g_{w'}(x)$, we have
\begin{align}
\frac{E(e^{-q}V^*(x+\xi))}{V^*(x)}&\le\frac{E(e^{-q}U_{w'}(x+\xi))}{g_{w'}(x)}\;\;\mbox{(by (\ref{e4.14}))}\notag\\
&=\frac{E_x\left(e^{-qT_u}g_{w'}\left(X_{T_u}\right)\mathbf{1}_{\left\{T_u<\infty\right\}}\right)}{g_{w'}(x)}\;\;\mbox{(by Lemma \ref{lg})}\label{c2}\\
&\le\frac{E_x\left(e^{-qT_x}g_{w'}\left(X_{T_x}\right)\mathbf{1}_{\left\{T_x<\infty\right\}}\right)}{g_{w'}(x)}=f(w')=1,\notag
\end{align}
where the second inequality follows by Lemma \ref{lc}(i) applied to $g_{w'}$ (which is increasing and logconcave). This proves (\ref{e2.10}) for $x>u>-\infty$ with $c>\alpha>0$, and establishes the optimality of $\tau_u$ for the case $-\infty<u<\infty$.

To prove the optimality of $\tau_u$ for $u=-\infty$, note that $\tau_{-\infty}=0$ and $V^*(x)=g(x)$ for $x\in\mathbb{R}$. We need to prove $V(x)=g(x)$ for $x\in\mathbb{R}$. Since $u=-\infty$, we have by (\ref{g1}) that $g(x)>0$ and $E_x\left(e^{-qT_x}g(X_{T_x})\mathbf{1}_{\left\{T_x<\infty\right\}}\right)/g(x)\le1$ for all $x$. By Lemma \ref{lb}, it suffices to show that $g(x)\ge e^{-q}E(g(x+\xi))$ for all $x$. We can establish this inequality in exactly the same way that we proved $V^*(x)\ge e^{-q}E(V^*(x+\xi))$ for $x>u>-\infty$. (We need only to replace $\tau_u$ by $\tau_{-\infty}=0$ in (\ref{e4.13}) so that $U_w(y)=g_w(y)$, to replace $V^*(y)$ by $g(y)$ in (\ref{e4.14}) and to replace $T_u$ by $T_{-\infty}=1$ in (\ref{c2}).)

Finally, to show $V(x)>g(x)$ for $x<u$ and $V(x)=g(x)$ for $x\ge u$, all that remains to be done is to prove $V(x)>g(x)$ for $x<u$. For $x<u$ with $g(x)>0$, we have by (\ref{g1})
$$
g(x)<E_x\left(e^{-qT_x}g(X_{T_x})\mathbf{1}_{\{T_x<\infty\}}\right)\le V(x).
$$
For $x<u$ with $g(x)=0$, $g(x)<V(x)$ holds trivially since $P(\xi>0)>0$ implies $V(y)>0$ for all $y\in\mathbb{R}$. The proof of Lemma \ref{la} is complete.
\end{proof}

\begin{proof}[\bf Proof of Lemma \ref{le}]
Note that for $x\ge x'$, $E_x(e^{-qT_x}g(X_{T_x})\mathbf{1}_{\{T_x<\infty\}})\le\sup_{y\in\mathbb{R}}g(y)=c=g(x)$, i.e. $E_x(e^{-qT_x}g(X_{T_x})\mathbf{1}_{\{T_x<\infty\}})/g(x)\le1$, implying that $u\le x'$. For $k=1,2,\dots$, let $\{X_n^{(k)}\}_{n\ge0}$ be a random walk generated by (truncated) increments $\xi_i^{(k)}=\min\{\xi_i,k\}$, $i=1,2,\dots$. Define
$$
V_k(x)=\sup_{\tau\in\mathcal{M}^{(k)}}E_x\left(e^{-q\tau}g(X_\tau^{(k)})\mathbf{1}_{\{\tau<\infty\}}\right),\;x\in\mathbb{R},
$$
where $\mathcal{M}^{(k)}\subset\mathcal{M}$ is the class of stopping times with values in $[0,\infty]$ with respect to the filtration $\{\mathcal{F}_n^{(k)}\}_{n\ge0}$, where $\mathcal{F}_n^{(k)}=\sigma\{\xi_1^{(k)},\dots,\xi_n^{(k)}\}\subset\sigma\{\xi_1,\dots,\xi_n\}=\mathcal{F}_n$. Since $\{\xi_n\}$ is i.i.d. and $\{X^{(k)}_n\}$ is Markov, it is not difficult to see that the value function $V_k(x)$ cannot be increased by taking stopping times in $\mathcal{M}$, i.e.
\begin{equation}\label{e2.19}
V_k(x)=\sup_{\tau\in\mathcal{M}^{(k)}}E_x\left(e^{-q\tau}g(X_\tau^{(k)})\mathbf{1}_{\{\tau<\infty\}}\right)=\sup_{\tau\in\mathcal{M}}E_x\left(e^{-q\tau}g(X_\tau^{(k)})\mathbf{1}_{\{\tau<\infty\}}\right).
\end{equation}
For any $\tau\in\mathcal{M}$, we have  $e^{-q\tau}g(X^{(k)}_{\tau})\mathbf{1}_{\{\tau<\infty\}}\nearrow e^{-q\tau}g(X_\tau)\mathbf{1}_{\{\tau<\infty\}}$ a.s. as $k\to\infty$, so that
$$
E(e^{-q\tau}g(X^{(k)}_{\tau})\mathbf{1}_{\{\tau<\infty\}})\nearrow E(e^{-q\tau}g(X_\tau)\mathbf{1}_{\{\tau<\infty\}})\;\;\mbox{as}\;\; k\to\infty.
$$
It follows that
\begin{equation}\label{e4.8}
V_k(x)\nearrow V(x)\;\mbox{as}\;k\to\infty.
\end{equation}

Let $T_x^{(k)}=\inf\{n\ge1:X_n^{(k)}\ge x\}$ and
\begin{equation*}
u_k=\inf\left\{x\in\mathbb{R}:\frac{E_x\left(e^{-qT^{(k)}_x}g(X^{(k)}_{T^{(k)}_x})\mathbf{1}_{\{T^{(k)}_x<\infty\}}\right)}{g(x)}\le1\right\}.
\end{equation*}
Clearly, $u_k\le x'<\infty$ for all $k$. Since $E(e^{\lambda\xi^{(k)}})<\infty$  for all $\lambda\ge0$, we have by Lemma \ref{la} that
$$
V_k(x)=E_x\left(e^{-q\tau^{(k)}_{u_k}}g(X^{(k)}_{\tau^{(k)}_{u_k}})\mathbf{1}_{\{\tau^{(k)}_{u_k}<\infty\}}\right)>0\;\;\mbox{for all}\;\;x,
$$
where $\tau^{(k)}_{u_k}=\inf\{n\ge0:X^{(k)}_n\ge u_k\}$. Since $V_{k+1}(x)\ge V_k(x)$ for all $x$, we have $g(u_{k+1})=V_{k+1}(u_{k+1})\ge V_k(u_{k+1})\ge g(u_{k+1})$. So $g(u_{k+1})=V_{k}(u_{k+1})$, implying that $u_{k+1}\ge u_k$.

Let $u_\infty=\lim_{k\to\infty}u_k$. We prove the optimality of $\tau_u$ in two steps. We show in step 1 that $\tau_{u_\infty}$ is optimal and in step 2 that $u_\infty=u$.

{\it Step 1.} We want to show
\begin{equation}\label{e10}
V(x)=E_x(e^{-q\tau_{u_\infty}}g(X_{\tau_{u_\infty}})\mathbf{1}_{\{\tau_{u_\infty}<\infty\}})\;\;\mbox{for all}\;\;x,
\end{equation}
which implies that $\tau_{u_\infty}$ is optimal. Since $u_k\nearrow u_\infty$, we have for any $x\ge u_\infty$ that $x\ge u_k$ and $V_k(x)=g(x)$ for all $k$, implying by (\ref{e4.8}) that
$$
V(x)=\lim_{k\to\infty}V_k(x)=g(x)=E_x\left(e^{-q\tau_{u_\infty}}g(X_{\tau_{u_\infty}})\mathbf{1}_{\{\tau_{u_\infty}<\infty\}}\right),
$$
establishing (\ref{e10}) for $x\ge u_\infty$. It remains to prove (\ref{e10}) for $x<u_\infty$. It suffices to show for $x<u_\infty$ that
\begin{equation}\label{e11}
V_k(x)=E_x(e^{-q\tau^{(k)}_{u_k}}g(X^{(k)}_{\tau^{(k)}_{u_k}})\mathbf{1}_{\{\tau^{(k)}_{u_k}<\infty\}})\to E_x(e^{-q\tau_{u_\infty}}g(X_{\tau_{u_\infty}})\mathbf{1}_{\{\tau_{u_\infty}<\infty\}})\;\;\mbox{as}\;\;k\to\infty.
\end{equation}
We argue below that with $X_0=x<u_\infty$, as $k\to\infty$
\begin{equation}\label{e12}
e^{-q\tau^{(k)}_{u_k}}g(X^{(k)}_{\tau^{(k)}_{u_k}})\mathbf{1}_{\{\tau^{(k)}_{u_k}<\infty\}}\to e^{-q\tau_{u_\infty}}g(X_{\tau_{u_\infty}})\mathbf{1}_{\{\tau_{u_\infty}<\infty\}}\;\;\mbox{a.s.},
\end{equation}
which together with the bounded convergence theorem implies (\ref{e11}). We now show that (\ref{e12}) holds a.s. on $\{\tau_{u_\infty}<\infty\}$ and on $\{\tau_{u_\infty}=\infty\}$ separately. Let $\Delta=\max_{j\le \tau_{u_\infty}}\xi_j<\infty$ on $\{\tau_{u_\infty}<\infty\}$. Then, for $k>\Delta$, $X^{(k)}_i=\sum_{j=1}^{i}\xi^{(k)}_j=\sum_{j=1}^{i}\xi_j=X_i$ for all $i\le\tau_{u_\infty}$. Letting $\Delta'=\max_{0\le i<\tau_{u_\infty}}X_i$ ($<u_\infty$), choose a sufficiently large $k_0$ such that $k_0>\Delta$ and $u_{k_0}>\Delta'$. Thus, $\tau^{(k)}_{u_k}=\tau_{u_\infty}$ and $X^{(k)}_{\tau^{(k)}_{u_k}}=X_{\tau_{u_\infty}}$ for all $k\ge k_0$, implying that
$$
e^{-q\tau^{(k)}_{u_k}}g(X^{(k)}_{\tau^{(k)}_{u_k}})\mathbf{1}_{\{\tau^{(k)}_{u_k}<\infty\}}= e^{-q\tau_{u_\infty}}g(X_{\tau_{u_\infty}})\mathbf{1}_{\{\tau_{u_\infty}<\infty\}}\;\;\mbox{for all}\;\;k\ge k_0.
$$
So (\ref{e12}) holds a.s. on $\{\tau_{u_\infty}<\infty\}$.

To show (\ref{e12}) holds a.s. on $\{\tau_{u_\infty}=\infty\}$, note that the random walk $\{X_0,X_1,\dots\}$ either satisfies $\overline{\lim}_{n\to\infty}X_n=+\infty$ a.s. or $\lim_{n\to\infty}X_n=-\infty$ a.s. If $\overline{\lim}_{n\to\infty}X_n=+\infty$ a.s., then $\tau_{u_\infty}<\infty$ a.s., so that trivially (\ref{e12}) holds a.s. on $\{\tau_{u_\infty}=\infty\}$. Now suppose $\lim_{n\to\infty}X_n=-\infty$ a.s. Then on $\{\tau_{u_\infty}=\infty,\lim_{n\to\infty}X_n=-\infty\}$, there is an $n_0\in\mathbb{N}$ such that $X_n<u_\infty$ for $0\le n\le n_0$ and $X_n<u_\infty-1$ for $n>n_0$. Let $0<\varepsilon<1$ be such that $\varepsilon<u_\infty-\max\{X_0,X_1,\dots,X_{n_0}\}$, so that $X_n<u_\infty-\varepsilon$ for all $n$. Choose $k_1$ such that $u_k>u_\infty-\varepsilon$ for all $k\ge k_1$. For $k\ge k_1$, $X_n^{(k)}\le X_n<u_\infty-\varepsilon<u_k$ for all $n$, so that $\tau^{(k)}_{u_k}=\infty$ for $k\ge k_1$. Hence, $e^{-q\tau^{(k)}_{u_k}}g(X^{(k)}_{\tau^{(k)}_{u_k}})\mathbf{1}_{\{\tau^{(k)}_{u_k}<\infty\}}=0$ for $k\ge k_1$. Since the right-hand side of (\ref{e12}) is 0 on $\{\tau_{u_\infty}=\infty\}$, it follows that (\ref{e12}) holds a.s. on $\{\tau_{u_\infty}=\infty\}$. This completes step 1.

{\it Step 2.} We now prove that $u_\infty=u$. If $u_\infty=-\infty$, then $x_0=-\infty$, i.e. $g(x)>0$ for all $x\in\mathbb{R}$. By the optimality of $\tau_{u_\infty}=\tau_{-\infty}=0$,
$$
g(x)=V(x)\ge E_x\left(e^{-qT_x}g(X_{T_x})\mathbf{1}_{T_x<\infty}\right)\;\;\mbox{for all}\;\;x\in\mathbb{R},
$$
implying that $u=-\infty$.

If $u>u_\infty>-\infty$, we have by (\ref{g1})
$$
E_{u_\infty}(e^{-qT_{u_\infty}}g(X_{T_{u_\infty}})\mathbf{1}_{\{T_{u_\infty}<\infty\}})/g(u_\infty)>1,
$$
(noting that $g(u_\infty)=V(u_\infty)>0$ by (\ref{e10})). Then we have
$$
g(u_\infty)<E_{u_\infty}(e^{-qT_{u_\infty}}g(X_{T_{u_\infty}})\mathbf{1}_{\{T_{u_\infty}<\infty\}})\le V(u_\infty)=g(u_\infty),
$$
a contradiction. So $u\le u_\infty$. Now suppose $u<u_\infty$. Let $x$ be such that $u<x<u_\infty$. By (\ref{g1}),
$$
\frac{E_x(e^{-qT_x}g(X_{T_x})\mathbf{1}_{\{T_x<\infty\}})}{g(x)}\le1,
$$
which by Lemma \ref{ld}(ii), implies that
\begin{equation}\label{e4.10}
g(x)\ge E_x(e^{-q\tau_{u_\infty}}g(X_{\tau_{u_\infty}})\mathbf{1}_{\{\tau_{u_\infty}<\infty\}})=V(x).
\end{equation}
Since $x<u_\infty$, we can choose $k$ such that $x<u_k$. By Lemma \ref{la} applied to $u_k$ and $V_k$, we have $g(x)<V_k(x)\le V(x)$, contradicting (\ref{e4.10}). This proves $u=u_\infty$ and establishes the optimality of $\tau_u$.

Finally, we show $V(x)>g(x)$ for $x<u$ and $V(x)=g(x)$ for $x\ge u$. It follows from (\ref{e10}) that $V(x)=g(x)$ for $x\ge u=u_\infty$. For $x<u=u_\infty$, choose a large $k$ so that $x<u_k$. By Lemma \ref{la} applied to $u_k$ and $V_k$, we have $g(x)<V_k(x)\le V(x)$. The proof is complete.

\end{proof}

\section*{Acknowledgements}
\hspace*{18pt}The authors gratefully acknowledge support from the Ministry of Science and Technology of Taiwan, ROC.


\end{document}